\newcommand{\dd}
\newcommand{\be}{\begin{equation}}
\newcommand{\ee}{\end{equation}}
\newcommand{\bea}{\begin{eqnarray}}
\newcommand{\eea}{\end{eqnarray}}
\newcommand{\bee}{\begin{eqnarray*}}
\newcommand{\eee}{\end{eqnarray*}}
 \newtheorem{thm}{Theorem}
 \newtheorem{cor}[thm]{Corollary}
 \newtheorem{lem}[thm]{Lemma}
 \newtheorem{prop}[thm]{Proposition}
 \theoremstyle{definition}
 \newtheorem{defn}[thm]{Definition}
 \newtheorem{fac}[thm]{Fact}
 \theoremstyle{remark}
\begin{document}

\title [$\mathrm{b}$-generalized skew derivations on multilinear polynomials in prime rings]{$\mathrm{b}$-generalized skew derivations acting as $2$-Jordan multiplier on multilinear polynomials  in prime rings}
\author[M. S. Pandey]{Mani Shankar Pandey }
\address{M. S. Pandey, The Institute of Mathematical Sciences, A CI of Homi Bhabha National Institute, C.I.T.Campus, Taramani, Chennai 600113, India.}
\email{manishankarpandey4@gmail.com}
\author[A. Pandey]{ Ashutosh Pandey }
\address{A. Pandey, School of Liberal Studies, Ambedkar University Delhi, Delhi-110006, INDIA.}
\email{ashutoshpandey064@gmail.com}

\thanks{{\it 2020 Mathematics Subject Classification.} 16N60, 16W25 }
\thanks{{\it Key Words and Phrases.}  Multilinear polynomials, $\mathrm{b}$-generalized skew derivations, extended centroid, Utumi quotient ring.}

\begin{abstract}
	Let $\mathcal{R}$ be a prime ring of characteristic not equal to $2$, $\mathcal{U}$, $\mathcal{C}$ be its	Utumi quotient ring and extended centroid  respectively. Suppose $\phi(\zeta_1\ldots,\zeta_n)$  be a non central multilinear polynomial over $\mathcal{C}$,  and $\mathcal{F},\ \mathcal{G}$ be two $\mathrm{b}$-generalized skew derivations of $\mathcal{R}$ satisfying the identity $$\mathcal{F}(u)u+u \mathcal{G}(u)=\mathcal{G}(u^2), \ \text{for all $u\in\{\phi(\zeta)\ |\ \zeta=(\zeta_1\ldots,\zeta_n) \in \mathcal{R}^n\}$.}$$
 Then the purpose of this paper is to describe all possible forms of the $\mathrm{b}$-generalized skew derivations $\mathcal{F}$ and $\mathcal{G}$.   Consequently, we discuss the cases when this identity acts as Jordan derivation and $2$ - Jordan multiplier on prime rings.
	\end{abstract}
\maketitle
\section*{Introduction}
An associative ring $\mathcal{R}$ is said to be prime  if for any $x,y\in \mathcal{R}$, $x \mathcal{R} y= 0$, implies either $x = 0$ or $y = 0$. Throughout this research article, unless specifically stated, $\mathcal{R}$  will denote a  prime ring with center $\mathcal{Z}(\mathcal{R})$, $\mathcal{U}$ its Utumi quotient ring. Note that the quotient ring $\mathcal{U}$ is also a prime ring and its center $\mathcal{C}$ which is termed as the extended centroid of $\mathcal{R}$, is a field. The definition and construction of $\mathcal{U}$ can be found in \cite{beidar1995}. 

An additive mapping $\Delta:\mathcal{R}\rightarrow \mathcal{R}$ is said to be a derivation if $$\Delta(xy)=\Delta(x)y +x\Delta(y)$$ for all $x,y\in \mathcal{R}$. For a fixed $p \in \mathcal{R}$, the mapping $\Delta_p: \mathcal{R} \rightarrow \mathcal{R}$, defined by $\Delta_p (x ) = [p, x ]$ for all $x \in \mathcal{R}$ is a derivation, known as inner derivation induced by an element ${p}$. A derivation that is not inner is called outer derivation. Back in 1957, Posner \cite{posner1957} proved that if $\Delta$ is a non trivial derivation of a prime ring $\mathcal{R}$ such that $[\Delta(x), x] \in \mathcal{Z}(\mathcal{R})$ for all $x\in \mathcal{R}$ then $\mathcal{R}$ is a commutative ring. The results of Posner were generalized by mathematicians in several aspects. In 1993, Bre$\check{s}$ar \cite{brevsar1993centralizing} proved that if $d$ and $g$ are two non-zero derivations on the ring $\mathcal{R}$ satisfying $dx -x g\in \mathcal{Z}(\mathcal{R})$ for all $x\in \mathcal{R}$ then $\mathcal{R}$ is commutative.  Later on, many mathematicians extended these results on some appropriate subsets of a prime ring $\mathcal{R}$.

In 1991, M. Bre$\check{s}$ar \cite{Breaser1991} introduced a new type of derivation and termed it as generalized derivation. 
\begin{defn}
An additive mapping $\Phi:\mathcal{R}\rightarrow \mathcal{R}$ is said to be a generalized derivation if there exists a derivation $d$ on $\mathcal{R}$ such that $$\Phi(xy)=\Phi(x)y+xd(y),\ \text{for all $x,y \in \mathcal{R}$.}$$   
\end{defn}
For fixed $\alpha,\beta \in \mathcal{R}$, the mapping $\Phi_{(\alpha,\beta)}: \mathcal{R} \rightarrow \mathcal{R}$ defined by $\Phi_{(\alpha,\beta)}(x) = \alpha x+ x\beta$ is a generalized derivation on $\mathcal{R}$ and is usually termed as generalized inner derivation on $\mathcal{R}$. It can be visualized that every generalized derivation is a  derivation but the converse is not true in general. Thus the study of generalized derivation covers the concepts of both derivation and generalized derivation.

In 2011, Argaç and De Filippis \cite{AF2011} attempted and succeeded to give a generalization of  Posner’s theorem \cite{posner1957}. They describe the complete structure of the additive mappings satisfying the identity $\mathcal{F}(u)u -u\mathcal{G}(u) = 0$ for all $u=\phi(\zeta_1,\ldots,\zeta_n) \in  \mathcal{R}$, where $\phi$ is a  noncentral  multilinear polynomial identity on  $\mathcal{R}$, $\mathcal{F}$ and $\mathcal{G}$ are two generalized derivations on the prime ring $ \mathcal{R}$. 
\section*{$\mathrm{b}$-generalized skew derivation}
  The $\mathrm{b}$-generalized derivation is another generalization of generalized derivation. In 2014 Ko$\check{s}$an and Lee \cite{Kosan and Lee} have given the following definition:
  \begin{defn}
  Let $\mathcal{R}$ be a ring, $\mathrm{b}\in \mathcal{U}$ and $\Delta:\mathcal{R}\rightarrow \mathcal{U}$ be an additive map. Then the additive  mapping $\Gamma : \mathcal{R} \rightarrow \mathcal{U}$ associated to the pair $(\mathrm{b},\Delta)$ is called a $\mathrm{b}$-generalized derivation of $\mathcal{R}$ if
  $$\Gamma (xy) =\Gamma(x)y + \mathrm{b}x\Delta(y) \ \text{for all $x,y\in \mathcal{R}$}.$$ 
\end{defn}  
  The additive map $\Delta$ is uniquely determined by $\mathcal{F}$ and called an associated linear map of $\mathcal{F}$. Moreover, authors \cite{Kosan and Lee} have proved that if $\mathcal{R}$ is a prime ring and $\mathrm{b}\neq 0$ then the associated map $\Delta$ must be a derivation of $\mathcal{R}$. Here, we see that a $1$-generalized derivation is a generalized derivation. For some $\mathrm{p},\mathrm{q},\mathrm{b}\in  \mathcal{U}$, define a map $\Gamma_{\mathrm{p},\mathrm{q}}^{\mathrm{b}} : \mathcal{R} \rightarrow \mathcal{U}$ such that $\Gamma_{\mathrm{p},\mathrm{q}}^{\mathrm{b}}(x) = \mathrm{p} x + \mathrm{b} x\mathrm{q}$ for all $x \in \mathcal{R}$, is an example of $\mathrm{b}$-generalized derivation  and which is termed as inner $\mathrm{b}$-generalized derivation.  

The definition of generalized skew derivation is a unified notion of skew derivation and generalized derivation, which are considered classical linear mappings of non-commutative algebras. Note that a skew derivation on $\mathcal{R}$ associated with  the automorphism $\alpha$ is an additive mapping of $\mathcal{R}$ such that $\Delta(xy)=\Delta(x)y+ \alpha(x)\Delta(y)$ for all $x,y\in \mathcal{R}$.  A skew derivation associated with identity automorphism is a derivation and a generalized skew derivation associated with identity automorphism is a generalized derivation. 
\begin{defn}
An additive mapping $\psi$ on $\mathcal{R}$ is called generalized skew derivation associated with the automorphism $\alpha\ \in\ Aut(\mathcal{R})$ if there exists a skew derivation $\Delta$ on $\mathcal{R}$ such that $\psi (xy) =\psi(x)y + \alpha(x)\Delta(y)$ for all $x,y \in \mathcal{R}$. 
\end{defn}  
  
  In 2017, C. K. Liu \cite{C. K. Liu. 2} generalized the result of Posner \cite{posner1957} by taking $\mathrm{b}$-generalized derivation with Engel conditions on the prime ring $\mathcal{R}$.
  
  In a study of  different types of derivations, in 2018, De Filippis and Wei \cite{de2018} introduced the notion of $\mathrm{b}$-generalized skew derivation.
\begin{defn}
Let $\mathcal{R}$ be an associative  ring,  $\mathrm{d}:\mathcal{R}\rightarrow \mathcal{R}$ be an additive mapping, $\alpha$ be an automorphism of $\mathcal{R}$ and $\mathrm{b}$ be a fixed element in the Utumi quotient ring $\mathcal{U}$. Then an additive mapping $\mathcal{F} : \mathcal{R} \rightarrow \mathcal{R}$, is said to be a $\mathrm{b}$-generalized skew derivation of $\mathcal{R}$ associated with the triplet  $(\mathrm{b},\alpha,\mathrm{d})$ if $$\mathcal{F}(xy) = \mathcal{F}(x)y+ \mathrm{b}\alpha(x)\mathrm{d}(y),\ \text{for all $x,y\in \mathcal{R}$.}$$ 
\end{defn}
Moreover, the authors have proved that if the ring $\mathcal{R}$ is prime and $\mathrm{b}\neq 0$ then  the associated additive map $\mathrm{d}$, defined above, is a skew derivation. Further, it has been shown that the additive mapping  $\mathcal{F}$ can be extended to the Utumi quotient ring $\mathcal{U}$ and it assumes the form $\mathcal{F}(x)= \mathrm{a}x+ \mathrm{b}\mathrm{d}(x)$, where $\mathrm{a} \in \mathcal{U}$. The concept of  $\mathrm{b}$-generalized skew derivation with the associated term $(\mathrm{b}, \alpha, \mathrm{d})$ covers the concepts of skew derivation, generalized derivations, and left multipliers, etc.  For instance if we choose $\mathrm{b}= 1$, $\mathrm{b}$-generalized skew derivation becomes a skew derivation and for $\mathrm{b} = 1, \alpha=I_{\mathcal{R}}$, $\mathrm{b}$-generalized skew derivation becomes a generalized derivation, where $I_{\mathcal{R}}$ is the identity map on the ring $\mathcal{R}$. Further, if we take $\mathrm{b}=0$ then $\mathrm{b}$-generalized skew derivation becomes a left multiplier map.  Let $\mathcal{R}$ be a prime ring and $\alpha$ be an automorphism of
$\mathcal{R}$. Then the mapping $\mathcal{F} : \mathcal{R}\longrightarrow \mathcal{R}$ given by $ x \mapsto  ax + \mathrm{b}\alpha(x)c$ is an $\mathrm{b}$-generalized skew derivation of $\mathcal{R}$ with associated triplet $(\mathrm{b},\alpha,\mathrm{d})$, where $a, b$ and $c$ are fixed elements in $\mathcal{R}$ and $d(x) = \alpha(x)c - cx$, for all $x \in \mathcal{R}$. Such $\mathrm{b}$-generalized skew derivation is called as inner $\mathrm{b}$-generalized skew derivation.

Recently, De Filippis and Wei \cite{de 2018} studied an identity related to $\mathrm{b}$-generalized skew derivations on the prime ring $\mathcal{R}$ with multilinear polynomial over $\mathcal{C}$. More precisely, they proved the following.
\begin{thm}
Let $\mathcal{R}$ be a prime ring of characteristic different from 2, $\mathcal{Q}_r$ be its right
Martindale quotient ring and $\mathcal{C}$ be its extended centroid, $\mathcal{G}$ a nonzero $X$-generalized
skew derivation of $\mathcal{R}$, and $\mathcal{S}$ be the set of the evaluations of a multilinear polynomial
$f(\chi_1,\ldots,\chi_n)$ over $\mathcal{C}$ with $n$ non-commuting variables. Let $u,v\in\mathcal{R}$ be such that $u\mathcal{G}(\chi)\chi + \mathcal{G}(\chi)\chi v = 0$ for all $\chi \in \mathcal{S}$. Then one of the following statements holds:
\begin{enumerate}
    \item  $v\in \mathcal{C}$ and there exist $a, b, c \in \mathcal{Q}_r$ such that $\mathcal{G}(\chi) = a\chi + b\chi c$ for all $\chi\in \mathcal{R}$ with
$(u + v)a = (u + v)b = 0$;
\item $f(\chi_1,\ldots,\chi_n)^2$  is central-valued on $\mathcal{R}$  and there exists $a
\in \mathcal{Q}_r$ such that $\mathcal{G}(\chi) =a\chi$ for all $\chi\in \mathcal{R}$ with $ua +av =0$.
\end{enumerate}
\end{thm}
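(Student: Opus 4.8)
The plan is to translate the hypothesis into a generalized (skew-)polynomial identity on the Martindale quotient ring $\mathcal{Q}_r$ and then exploit the theory of such identities together with explicit computations in a matrix ring. First I would put $\mathcal{G}$ into its canonical form: by the structure theory of $X$-generalized skew derivations, $\mathcal{G}$ extends to $\mathcal{Q}_r$ and can be written as $\mathcal{G}(x)=ax+b\,d(x)$ for suitable $a,b\in\mathcal{Q}_r$, where $d$ is a skew derivation of $\mathcal{R}$ associated with an automorphism $\alpha$. Substituting this into the hypothesis yields
$$u\bigl(a\chi+b\,d(\chi)\bigr)\chi+\bigl(a\chi+b\,d(\chi)\bigr)\chi v=0,\qquad \chi\in\mathcal{S},$$
and expanding $d(f(\chi_1,\ldots,\chi_n))$ by the skew-Leibniz rule converts the left-hand side into a generalized skew-differential expression in the variables $\chi_i$, $d(\chi_i)$ and $\alpha(\chi_i)$.

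Next I would branch according to whether the skew derivation $d$ and the automorphism $\alpha$ are inner or outer. If $d$ is $X$-outer, a Kharchenko-type theorem for skew derivations (in the form established by Chuang and Lee) lets me replace each $d(\chi_i)$ by an independent indeterminate; specializing these generic variables then splits the relation into several genuine generalized polynomial identities, one of which (the part independent of $d$) already constrains $a,u,v$ strongly, while the others force the $b$-term to collapse. If $d$ is $X$-inner, say $d(x)=\alpha(x)q-qx$, the substitution produces directly a GPI of the form $u(a\chi+b\,\alpha(\chi)c)\chi+(a\chi+b\,\alpha(\chi)c)\chi v=0$ on $\mathcal{S}$; when $\alpha$ is outer one further treats $\alpha(\chi_i)$ as independent from $\chi_i$, and when $\alpha$ is inner it is absorbed into the coefficients, reducing $\mathcal{G}$ to the generalized-inner shape $a\chi+b\chi c$ anticipated in conclusion (1).

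Once a nontrivial generalized polynomial identity holds on the evaluations of $f$, Martindale's theorem shows that $\mathcal{R}$ is primitive with nonzero socle and $\mathcal{C}$-finite associated division ring, so the problem reduces to matrices $M_k(\mathcal{C})$ (passing to a splitting field of $\mathcal{C}$ if necessary). Here I would use the noncentrality of $f$ --- via the description of the set of values of a multilinear polynomial, $f(\mathcal{R}^n)$ contains, up to conjugation, off-diagonal matrix units $e_{ij}$ with $i\neq j$ --- to evaluate the identity on well-chosen matrix units and read off the coefficient equations linking $a,b,c,u,v$. Solving these equations yields exactly the asserted dichotomy: generically the constraints force $v\in\mathcal{C}$ together with $(u+v)a=(u+v)b=0$, giving conclusion (1), whereas the single exceptional configuration in which the matrix analysis cannot annihilate the $b$-term is precisely the one where $f^2$ is central-valued, in which branch $b$ drops out, $\mathcal{G}(\chi)=a\chi$, and $ua+av=0$, giving conclusion (2).

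The step I expect to be the main obstacle is this final matrix analysis in the inner/GPI case: carefully separating the inner and Frobenius (outer) behaviour of $\alpha$, disentangling the system of coefficient relations on the matrix units, and identifying that the only obstruction to $v$ being central is the central-valuedness of $f^2$. The low-dimensional cases (notably $k=2$, where $f^2$ can genuinely be central) and the repeated use of the characteristic-not-$2$ hypothesis to divide relations will require the most delicate handling.
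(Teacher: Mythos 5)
First, a point of comparison: this statement is not proved in the paper at all. It is quoted verbatim as a known result of De Filippis and Wei (the paper introduces it with ``they proved the following'' and cites their work), so there is no internal proof to measure your proposal against; one can only judge it against the standard machinery that such results in this literature rely on.

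Judged on those terms, your skeleton is the right one --- canonical form $\mathcal{G}(x)=ax+b\,d(x)$ on $\mathcal{Q}_r$, the inner/outer dichotomy for $d$ and for $\alpha$ with Chuang--Lee/Kharchenko-type theorems to make $d(\chi_i)$ and $\alpha(\chi_i)$ independent indeterminates in the outer cases, then Martindale's theorem to land in a primitive ring with socle and finally in $M_k(\mathcal{C})$ for matrix-unit computations. But the proposal has a genuine gap precisely where the theorem lives: everything after ``solving these equations yields exactly the asserted dichotomy'' is asserted, not argued. Nothing in the outline explains \emph{why} the generic branch forces $v\in\mathcal{C}$ (this requires a linear-independence argument on $\{1,v\}$ over $\mathcal{C}$ inside the free product $\mathcal{Q}_r\ast_{\mathcal{C}}\mathcal{C}\{\chi_i\}$, not merely matrix-unit evaluations), why the annihilation conditions come out one-sided as $(u+v)a=(u+v)b=0$ rather than two-sided, or how the exceptional branch is detected: the central-valuedness of $f^2$ is not something that ``falls out'' of coefficient equations on $e_{ij}$'s --- it is exactly the configuration in which the substitution $f(\zeta)=\mathcal{A}e_{ij}$, $i\neq j$, is unavailable as a separating tool (e.g.\ $f=[\chi_1,\chi_2]$ on $M_2(\mathcal{C})$), so it must be isolated \emph{before} the matrix-unit analysis, typically by a separate lemma characterizing when $f(\zeta)a f(\zeta)$-type identities degenerate. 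Likewise, in the $X$-outer case you claim the $d$-independent part ``already constrains $a,u,v$ strongly, while the others force the $b$-term to collapse,'' but the collapse of the $b$-term is the step that produces conclusion (2) with $\mathcal{G}(\chi)=a\chi$ and $ua+av=0$, and no mechanism for it is given. As written, the proposal is a correct road map that defers the theorem's actual content --- the separation of the two conclusions and the derivation of their precise annihilator conditions --- to ``the main obstacle,'' which it acknowledges but does not overcome.
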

In \cite{V. D. Filippis 2021}, Filippis et. al. continue this line of investigation  by studying  the identity $\mathcal{F}(u)u - u\mathcal{G}(u) = 0$ for all $u\in\{\phi(\zeta)\ |\ \zeta=(\zeta_1\ldots,\zeta_n) \in \mathcal{R}^n\}$ and describe the complete structure of the $\mathrm{b}$-generalized skew derivations $\mathcal{F}$ and $\mathcal{G}$, where $\phi(\zeta)$ is a multilinear polynomial and $\mathcal{\zeta}=(\mathcal{\zeta}_1,\ldots,\mathcal{\zeta}_n)\in \mathcal{R}^n$. Many generalizations which are relevant to our discussion about $\mathrm{b}$-generalized skew derivation can be found in \cite{de2018, de 2018, V. D. Filippis 2021}. 
\section*{Main Theorem}
This will be an interesting problem when $\mathrm{b}$-generalized skew derivations act as a Jordan derivation or as a $n$-Jordan multiplier. Following this line of investigation, our main theorem presents a detailed description of the maps $\mathcal{F}$ and $\mathcal{G}$, in the cases when $\mathcal{F}$ acts as a $2$-Jordan derivation and $\mathcal{G}$  acts as a $2$-Jordan multiplier. Here, we study the $\mathrm{b}$-generalized skew derivation identity $\mathcal{F}(u)u+u\mathcal{G}(u) = \mathcal{G}(u^2)$ for all $u\in\{\phi(\zeta)\ |\ \zeta=(\zeta_1\ldots,\zeta_n) \in \mathcal{R}^n\}$, where $\phi(\zeta)$ is a multilinear polynomial and $\mathcal{F}$, $\mathcal{G}$ are $\mathrm{b}$-generalized skew derivations. More precisely, the statement of our main theorem is the following.
\begin{thm} \label{thm}
Let $\mathcal{R}$  be prime ring with $char(\mathcal{R})\neq 2$, $\mathcal{U}$ be the Utumi quotient ring of $\mathcal{R}$ and $\mathcal{C}$ be the extended centroid of $\mathcal{R}$, $\mathcal{F}$ and $\mathcal{G}$ be two $\mathrm{b}$-generalized skew derivations on $\mathcal{R}$ with associated triplets $(\tilde{b}, \alpha, g)$ and $(b^{\prime}, \alpha, h)$ respectively. Let $\phi(\zeta)$ be a non central polynomial identity of $\mathcal{R}$ such that $$\mathcal{F}(u)u+u \mathcal{G}(u)=\mathcal{G}(u^2)$$ for all $u\in\{\phi(\zeta)|\zeta=(\zeta_1\ldots,\zeta_n) \in \mathcal{R}^n\}$. Then one of the following holds:
		\begin{enumerate}
	    \item There exist $a,\tilde{a},c,\tilde{c} \in \mathcal{U}$ such that $\mathcal{F}(\chi)= a\chi+\chi\tilde{a}$ and $\mathcal{G}(\chi)=c\chi+\chi\tilde{c}$ for all $\chi \in \mathcal{R}$ with $  \tilde{a}+c \in \mathcal{C}$ and $\tilde{a}+a=0$;
	    \item There exist $a,\tilde{a},\hat{c} \in \mathcal{U}$ such that $\mathcal{F}(\chi)= a\chi+\chi\tilde{a}$ and $\mathcal{G}(\chi)=\hat{c}\chi$ for all $\chi \in \mathcal{R}$ with $\tilde{a}+\hat{c}\in \mathcal{C}$ and $\tilde{a}+a=0$;
     \item There exist $c\in \mathcal{C}$ and $\tilde{c}\in \mathcal{U}$ such that $\mathcal{F}(\chi)= 0$ and $\mathcal{G}(\chi)=c\chi+\chi\tilde{c}$ for all $\chi \in \mathcal{R}$.
	    \item There exist $\tilde{c}\in \mathcal{C}$ such that $\mathcal{F}(\chi)= 0$ and $\mathcal{G}(\chi)=\tilde{c}\chi$ for all $\chi \in \mathcal{R}$.
	    \item There exist   $\tilde{b},b^{\prime},c\in \mathcal{U}$ and $\eta,b^{\prime}\eta-c\in \mathcal{C}$ such that $\mathcal{F}(\chi)=b^{\prime}h(\chi)+[\tilde{b}\eta ,\chi]$ and $\mathcal{G}(\chi)=c\chi+b^{\prime}h(x)$, where $h$ is a skew derivation associated with the automorphism(inner) $\alpha$ satisfying $\tilde{b}\alpha(\chi)=\chi\tilde{b},$ for all $\chi\in \mathcal{R}$.
     	\end{enumerate}
	\end{thm}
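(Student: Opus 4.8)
The plan is to put both maps into canonical form and collapse the hypothesis to a single skew differential identity on $\mathcal{U}$. By the extension theorem of De Filippis and Wei quoted above, I may write $\mathcal{F}(x)=ax+\tilde{b}\,g(x)$ and $\mathcal{G}(x)=cx+b^{\prime}\,h(x)$ for suitable $a,c\in\mathcal{U}$, where $g,h$ are skew derivations of $\mathcal{R}$ attached to the common automorphism $\alpha$. Since $\mathcal{G}$ is a $\mathrm{b}$-generalized skew derivation, $\mathcal{G}(u^2)=\mathcal{G}(u)u+b^{\prime}\alpha(u)h(u)$, so the hypothesis rewrites, for every evaluation $u=\phi(\zeta)$, as
\begin{equation}
(a-c)u^2+\tilde{b}\,g(u)\,u-b^{\prime}\,h(u)\,u+ucu+ub^{\prime}h(u)-b^{\prime}\alpha(u)h(u)=0.
\end{equation}
Expanding $g(\phi(\zeta))$ and $h(\phi(\zeta))$ by the skew Leibniz rule converts this into a generalized skew differential identity in the variables $\zeta_1,\dots,\zeta_n$, their images $\alpha(\zeta_i)$, and the derivatives $g(\zeta_i),h(\zeta_i)$.

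The next step is to strip off the derivational and automorphic parts. I would split according to whether $\alpha$ is X-inner or X-outer and whether $g,h$ are X-inner or X-outer skew derivations. When the skew derivations are X-outer, the skew analogue of Kharchenko's theorem (in the Chuang--Lee formulation used for $\mathrm{b}$-generalized skew derivations) lets me treat $g(\zeta_i)$ and $h(\zeta_i)$ as fresh, independent indeterminates, and likewise $\alpha(\zeta_i)$ when $\alpha$ is outer; the resulting relation is then a genuine generalized polynomial identity (GPI) for $\mathcal{U}$. When $\alpha$ is X-inner, implemented on $\mathcal{U}$ by a fixed invertible element, the intertwining relation $\tilde{b}\alpha(\chi)=\chi\tilde{b}$ appearing in conclusion (5) is precisely the constraint forced in this branch, and the skew derivations absorb into inner ones; this is the branch that must be carried separately throughout.

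Once a nontrivial GPI is in hand I would invoke Martindale's theorem: $\mathcal{U}$ is then a primitive ring with nonzero socle whose commuting division ring is finite dimensional over $\mathcal{C}$, and after scalar extension to $\overline{\mathcal{C}}$ one may assume $\mathcal{U}\cong M_k(\mathcal{C})$ with $k\geq 2$ (the case $k=1$ is commutative and excluded since $\phi$ is noncentral). Because $\phi$ is noncentral, Leron's theorem guarantees that its set of evaluations is not contained in $\mathcal{C}$, so the standard matrix-unit substitutions are admissible. Substituting $\zeta_i$ equal to appropriate matrix units into the GPI and comparing entries pins down the coefficients $a,c,\tilde{b},b^{\prime}$ together with their companions $\tilde{a}=-a$, $\tilde{c}$, $\hat{c}$: the diagonal and off-diagonal comparisons force relations such as $\tilde{a}+a=0$ and $\tilde{a}+c\in\mathcal{C}$, while the various vanishing patterns separate the possibilities $\mathcal{F}=0$, $\mathcal{G}$ inner with a central part, and so on.

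Finally I would assemble the bookkeeping into the five listed conclusions: $\mathcal{F}=0$ as its own family (conclusions (3)--(4)), the generic inner case as (1)--(2) according to whether $\mathcal{G}$ retains a right multiplier term $\chi\tilde{c}$ or collapses to $\hat{c}\chi$, and the X-inner automorphism case as (5). The main obstacle I anticipate is the skew-derivation reduction of the second step: correctly applying the skew Kharchenko/Chuang--Lee machinery---in particular the simultaneous treatment of the two skew derivations $g,h$ sharing the automorphism $\alpha$, and the clean isolation of the X-inner $\alpha$ branch---is where the genuine difficulty and the risk of hidden GPI degeneracies lie, whereas the ensuing matrix computations, though lengthy, are essentially mechanical.
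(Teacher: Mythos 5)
Your opening moves match the paper: the De Filippis--Wei canonical forms $\mathcal{F}(\chi)=a\chi+\tilde{b}g(\chi)$, $\mathcal{G}(\chi)=c\chi+b^{\prime}h(\chi)$, and the rewritten identity $(a-c)u^2+\tilde{b}g(u)u-b^{\prime}h(u)u+ucu+ub^{\prime}h(u)-b^{\prime}\alpha(u)h(u)=0$ are exactly the paper's starting point, and your plan for the all-inner branch (Martindale, matrix units) is in the same spirit as the paper's Proposition \ref{prop1} and Lemmas \ref{lem2}--\ref{lem4}. But there is a genuine gap at the step you yourself flag as the main obstacle. You assert that when $g,h$ are X-outer, the Chuang--Lee machinery lets you treat $g(\zeta_i)$ and $h(\zeta_i)$ as fresh, mutually independent indeterminates. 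That substitution is licensed only when $g$ and $h$ are $\mathcal{C}$-linearly \emph{independent modulo X-inner skew derivations}. Both maps can be X-outer while satisfying a relation $pg(\chi)+qh(\chi)=w\chi-\Gamma(\chi)w$, and in that situation their values cannot be specialized independently; doing so derives consequences that do not follow from the hypothesis. This is not a removable degeneracy: it is exactly the branch that produces conclusion (5). The paper splits the outer case accordingly: when $g,h$ are independent modulo inner, the independent-variable substitution forces $\tilde{b}=b^{\prime}=0$, a \emph{contradiction} with outerness --- so, contrary to your plan, no coefficient relations or conclusions are extracted there by matrix-unit work; when they are dependent ($g=\delta h+\eta\chi-\alpha(\chi)\eta$ after normalizing), a separate analysis with only $h$ treated as outer leads, in the sub-branch where $\alpha$ is inner and $\tilde{b}t\in\mathcal{C}$ (equivalently $\tilde{b}\alpha(\chi)=\chi\tilde{b}$), to conclusion (5) with $\eta,\,b^{\prime}\eta-c\in\mathcal{C}$, and to contradictions otherwise. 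Without this dependence/independence dichotomy your argument either misses conclusion (5) entirely or proves false intermediate identities.

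Two smaller points. First, your attribution of conclusion (5) simply to ``$\alpha$ X-inner'' is off: $\alpha$ X-inner with $g,h$ both inner lands in conclusions (1)--(4) via the inner proposition; (5) needs X-inner $\alpha$ \emph{inside the dependent-outer branch}. Second, after Martindale's theorem you cannot in general ``assume $\mathcal{U}\cong M_k(\mathcal{C})$'': the primitive ring may act densely on an infinite-dimensional space, and one needs Litoff's theorem to pass to finite corners $a\mathcal{R}a\cong M_l(\mathcal{C})$ and then transfer centrality back, as the paper does in Lemma \ref{lem4}; that step is standard but must be said.
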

\section*{Preliminaries}
In this section, we recall some preliminary results which will be used frequently throughout the paper.
\begin{fac}\label{fac2}
	Let $\mathcal{R}$ be a prime ring and $\mathcal{I}$ a two-sided ideal of $\mathcal{R}$. Then $\mathcal{R}$, $\mathcal{I}$ and $\mathcal{U}$ satisfy the same generalized polynomial identities with coefficients in $\mathcal{U}$ \cite{beidar1978ri}.
\end{fac}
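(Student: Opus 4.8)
The plan is to separate the ``easy'' and ``hard'' directions. Since $\mathcal{I} \subseteq \mathcal{R} \subseteq \mathcal{U}$, any generalized polynomial identity (GPI) with coefficients in $\mathcal{U}$ that holds on $\mathcal{U}$ restricts to one on $\mathcal{R}$ and on $\mathcal{I}$: a substitution from a smaller set is a special case of a substitution from a larger one. The entire content is therefore the reverse implication, that a GPI holding on the smaller set must already hold on the larger one. I would work inside the ring of generalized polynomials, the free product $\mathcal{U} *_{\mathcal{C}} \mathcal{C}\langle X\rangle$ over the extended centroid, so that a GPI is a single formal element $g(x_1,\dots,x_n)$ required to vanish under all substitutions from the set in question, and I would reduce the three-way comparison to the two pairwise statements $\mathcal{I}$ versus $\mathcal{R}$ and $\mathcal{R}$ versus $\mathcal{U}$.

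The first ingredient is the defining property of the Utumi quotient ring together with density of ideals: every $q\in\mathcal{U}$ admits a nonzero two-sided ideal $D$ of $\mathcal{R}$ with $Dq\subseteq\mathcal{R}$ and $qD\subseteq\mathcal{R}$; the action of $\mathcal{U}$ on such dense ideals is faithful, so $Dq=0$ forces $q=0$ by primeness; and a nonzero two-sided ideal $\mathcal{I}$ of a prime ring is itself prime and dense, whence $\mathcal{I}$, $\mathcal{R}$ and $\mathcal{U}$ all share the \emph{same} Utumi quotient ring $\mathcal{U}$ and the \emph{same} extended centroid $\mathcal{C}$. This common coefficient ring is what makes the comparison meaningful in the first place.

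The decisive tool is Martindale's structure theory of prime GPI-rings, whose upshot I would isolate as a lemma: the vanishing of a generalized polynomial over a \emph{dense} subset is equivalent to a finite system of $\mathcal{C}$-linear dependence relations among the coefficients appearing in $g$ — in the linear model $\sum_i a_i x b_i$, vanishing on a dense set forces $a_i=0$ whenever the $b_i$ are $\mathcal{C}$-independent, and symmetrically. The crucial point is that these coefficient relations are detected \emph{identically} whether the variable ranges over $\mathcal{I}$, over $\mathcal{R}$, or over $\mathcal{U}$, since each of these sets acts faithfully through a dense ideal of $\mathcal{R}$; any relation forced by substitutions from $\mathcal{I}$ is already forced by those from $\mathcal{R}$ and conversely, because both are governed by the same faithful action on $\mathcal{U}$. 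Once the GPI is encoded as coefficient relations intrinsic to the pair $(\mathcal{U},\mathcal{C})$, the formal element $g$ reduces to zero uniformly, i.e. it vanishes under substitutions from all three sets at once, settling $\mathcal{I}$ versus $\mathcal{R}$ and $\mathcal{R}$ versus $\mathcal{U}$ together.

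The main obstacle is carrying out Martindale's reduction for a general GPI rather than the illustrative degree-one, one-variable model. When a variable occurs with multiplicity greater than one, or when several variables interact, one cannot simply read off coefficient relations: substituting an element padded by dense-ideal factors inserts those factors between the coefficients and the variable occurrences, and the naive cancellation fails. The correct device is a multilinearization followed by Martindale's density theorem for the central closure — a primitive ring with nonzero socle whenever a nontrivial GPI is present — which permits evaluation at rank-one operators and the separation of homogeneous components, after which faithfulness and primeness remove the auxiliary factors. The complementary case, in which $\mathcal{R}$ satisfies \emph{no} nontrivial GPI, is handled separately: by Martindale's characterization this is a condition on $(\mathcal{U},\mathcal{C})$ alone, hence shared by $\mathcal{I}$, $\mathcal{R}$ and $\mathcal{U}$, and there the only GPIs are the formally trivial ones for all three. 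Making this dichotomy and the linearization bookkeeping precise is where the real work lies; granting it, the equality of the GPI-sets of $\mathcal{I}$, $\mathcal{R}$ and $\mathcal{U}$ follows.
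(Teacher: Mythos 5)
You should first be aware that the paper does not prove this statement at all: it is quoted as background (Fact \ref{fac2}) with a citation to Beidar, and the $\mathcal{R}$-versus-$\mathcal{U}$ half is Chuang's theorem in \cite{C. L. Chuang. 1988}. So the comparison here is with the cited literature, and your overall architecture — common Utumi ring and extended centroid for $\mathcal{I}$, $\mathcal{R}$, $\mathcal{U}$; reduction to the two pairwise transfers; the dichotomy between the GPI and non-GPI cases, with the GPI case handled through the central closure being primitive with nonzero socle — does match the standard proof in outline. However, your ``decisive tool'' is false as stated, and it fails exactly in the case that carries all the content. You claim that vanishing of a generalized polynomial on a dense subset is \emph{equivalent} to a finite system of $\mathcal{C}$-dependence relations among its coefficients, intrinsic to the pair $(\mathcal{U},\mathcal{C})$. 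What Martindale's lemma actually gives is one implication in the linear model: if $\sum_i a_i x b_i$ vanishes on a nonzero ideal with the $b_i$ $\mathcal{C}$-independent, then all $a_i=0$; that characterizes \emph{triviality} of $g$ in $\mathcal{U}\ast_{\mathcal{C}}\mathcal{C}\{\zeta\}$, not vanishing. Vanishing is strictly weaker: the standard identity $St_4$ vanishes on $\mathcal{M}_2(\mathbb{K})$ yet is a nonzero element of the free product, and no coefficient relation detects this — its vanishing is a structural fact about $\mathcal{U}$ being $4$-dimensional over $\mathcal{C}$. So the step ``encode the GPI as coefficient relations, then conclude it vanishes under substitutions from all three sets at once'' collapses precisely when $\mathcal{R}$ is a GPI-ring, which is the only case in which anything needs proving (in the non-GPI case every GPI on $\mathcal{R}$ is formally zero and the transfer is vacuous).

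Two further points. First, even your non-GPI case hides a circularity: to say that $\mathcal{I}$ satisfies no nontrivial GPI because $\mathcal{R}$ does not, you need the $\mathcal{I}$-to-$\mathcal{R}$ transfer itself — the standard device is to pass from $g(\zeta_1,\ldots,\zeta_n)$ vanishing on $\mathcal{I}$ to $g(u\zeta_1 v,\ldots,u\zeta_n v)$ vanishing on $\mathcal{R}$ for fixed $u,v\in\mathcal{I}$, and then to show that this substitution endomorphism of the free product preserves nontriviality; none of that is in your sketch. Second, you correctly identify the padding obstruction (dense-ideal factors inserted between coefficients and variable occurrences do not yield evaluations of $g$), but your proposed cure — ``multilinearization followed by Martindale's density theorem, after which faithfulness and primeness remove the auxiliary factors'' — is exactly the part that is asserted rather than argued: the map $\zeta\mapsto d\zeta$ does not commute with substitution into a monomial such as $a_0\zeta a_1\zeta a_2$, so faithfulness of the action on a dense ideal alone does not let you strip the factors. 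The genuine proof in the GPI case works on the socle of the central closure (with Litoff-type reductions and evaluations at rank-one elements) and uses that elements of $\mathcal{U}$ are determined by their action on a dense ideal contained in $\mathcal{I}$; granting your final paragraph's admission, what you have is a correct roadmap to Chuang's argument with its central mechanism missing, not yet a proof.
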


\begin{fac}\label{fac3}
	Let $\mathcal{R}$ be a prime ring and $\mathcal{I}$ a two-sided ideal of $\mathcal{R}$. Then $\mathcal{R}, \mathcal{I}$ and $\mathcal{U}$
	satisfy the same differential identities \cite{lee1992semiprime}.
\end{fac}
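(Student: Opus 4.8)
\section*{Proof proposal for Fact \ref{fac3}}

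The plan is to reduce the comparison of differential identities among $\mathcal{R}$, $\mathcal{I}$, and $\mathcal{U}$ to the corresponding comparison of generalized polynomial identities, where Fact \ref{fac2} already supplies the conclusion. The whole argument rests on Kharchenko's theory of differential identities in prime rings together with the extension of derivations to the Utumi quotient ring. First I would record the extension step. Since $\mathcal{I}$ is a two-sided ideal of the prime ring $\mathcal{R}$, the rings $\mathcal{R}$ and $\mathcal{I}$ share the same Utumi quotient ring $\mathcal{U}$ and the same extended centroid $\mathcal{C}$. Every derivation $d$ of $\mathcal{I}$ (respectively of $\mathcal{R}$) extends uniquely to a derivation of $\mathcal{U}$; this standard property of the maximal Utumi quotient ring lets us regard any finite family of derivations occurring in a given differential identity as one family $d_1,\ldots,d_k$ defined simultaneously on $\mathcal{I}$, $\mathcal{R}$, and $\mathcal{U}$. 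It therefore suffices, for this fixed family, to compare which differential identities in the $d_j$ hold on each of the three rings, the downward direction (from $\mathcal{U}$ to $\mathcal{R}$ and $\mathcal{I}$) being immediate once the derivations restrict.

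Next I would invoke Kharchenko's reduction. A differential identity is a generalized polynomial in the indeterminates $\zeta_i$ and in their derivation-images $d_j(\zeta_i)$, $d_{j_1}d_{j_2}(\zeta_i)$, and so on. Using the chain rule, every composite of derivations is rewritten in terms of a fixed finite set of reduced derivation words; then, after separating the inner derivations $\operatorname{ad}(p)$ with $p\in\mathcal{U}$ --- whose images $p\zeta_i-\zeta_i p$ are ordinary generalized monomials --- from a maximal subset of mutually outer derivations that are $\mathcal{C}$-independent modulo inner ones, Kharchenko's theorem asserts that the differential identity holds on the prime ring if and only if the associated generalized polynomial identity holds. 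The latter is obtained by replacing each surviving outer term $d_j(\zeta_i)$ with a fresh independent indeterminate $y_{ji}$ and writing out the inner contributions explicitly. The crucial point is that this correspondence is an equivalence and that the data governing it --- which derivations are inner, and the $\mathcal{C}$-linear relations among the outer ones --- are determined entirely by $\mathcal{U}$ and $\mathcal{C}$, hence are identical for $\mathcal{R}$, $\mathcal{I}$, and $\mathcal{U}$.

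With the reduction in place I would close using Fact \ref{fac2}. On each of $\mathcal{R}$, $\mathcal{I}$, and $\mathcal{U}$ the differential identity under consideration is equivalent to one and the same generalized polynomial identity $\Phi(\zeta_i,y_{ji})=0$ with coefficients in $\mathcal{U}$. By Fact \ref{fac2}, these three rings satisfy exactly the same such identities, so $\Phi=0$ holds on one of them precisely when it holds on all three; running the Kharchenko correspondence backwards then yields that the original differential identity holds on $\mathcal{R}$ if and only if it holds on $\mathcal{I}$ if and only if it holds on $\mathcal{U}$. The main obstacle is the bookkeeping inside Kharchenko's theorem: one must verify that after extension to $\mathcal{U}$ the outer derivations remain outer and $\mathcal{C}$-independent, so that passing to $\mathcal{U}$ neither creates nor destroys a spurious identity, and that the chain-rule rewriting of derivation composites is compatible across the three rings. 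Once this linear-independence bookkeeping is secured, the transfer follows at once from the already-established Fact \ref{fac2}.
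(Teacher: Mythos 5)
The paper offers no proof of Fact~\ref{fac3}: it is stated as a known result with a citation to Lee \cite{lee1992semiprime}, and your argument is essentially the proof given in that source --- extend the derivations uniquely to $\mathcal{U}$, use Kharchenko's reduction (with innerness and $\mathcal{C}$-independence measured in $\mathcal{U}$, hence the same for $\mathcal{I}$, $\mathcal{R}$, $\mathcal{U}$) to replace the differential identity by an equivalent generalized polynomial identity, and then transfer via the GPI analogue (Fact~\ref{fac2}). Your proposal is therefore correct and takes the same route as the paper's cited proof, with the bookkeeping you flag (reduced derivation words, and the extra relations among derivations in positive characteristic) being exactly the technical content Lee's paper supplies.
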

\begin{fac}\label{fac4}
	Let $\mathcal{R}$ be a prime ring. Then every derivation $d$ of $\mathcal{R}$ can be uniquely
	extended to a derivation of $\mathcal{U}$ \cite{beidar1978ri}.
\end{fac}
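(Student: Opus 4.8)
The plan is to prove uniqueness first --- which pins down the only possible formula for the extension --- and then to show that this formula genuinely defines a derivation of $\mathcal{U}$. Throughout I use two standard features of the Utumi quotient ring of a prime ring: (i) every $q\in\mathcal{U}$ admits a nonzero two-sided ideal $I=I_q$ of $\mathcal{R}$ with $qI\subseteq\mathcal{R}$ and $Iq\subseteq\mathcal{R}$; and (ii) an element $x\in\mathcal{U}$ with $Ix=0$ or $xI=0$ for some nonzero ideal $I$ must vanish, which is immediate from primeness. Feature (ii) is the ``faithfulness'' that will let me cancel ideal elements throughout.

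For uniqueness, suppose $D_1,D_2$ are derivations of $\mathcal{U}$ that both restrict to $d$ on $\mathcal{R}$. Given $q\in\mathcal{U}$, pick $a,b\in I_q$; then $a,b,aqb\in\mathcal{R}$, and expanding $D_i(aqb)$ by the Leibniz rule while using $D_i|_{\mathcal{R}}=d$ gives $a\,D_i(q)\,b=d(aqb)-d(a)\,qb-aq\,d(b)$ for $i=1,2$. Hence $a\bigl(D_1(q)-D_2(q)\bigr)b=0$ for all $a,b\in I_q$, and (ii) forces $D_1(q)=D_2(q)$. This proves uniqueness and simultaneously shows that any extension must satisfy
\[
a\,d(q)\,b=d(aqb)-d(a)\,qb-aq\,d(b)\qquad(a,b\in I_q).
\]

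For existence I turn this identity into a definition. Fix $q\in\mathcal{U}$ and $I=I_q$, and set $T(a,b):=d(aqb)-d(a)\,qb-aq\,d(b)\in\mathcal{R}$ for $a,b\in I$, which is well defined because $qb,aq\in\mathcal{R}$ and $aqb\in I\subseteq\mathcal{R}$. A direct Leibniz computation shows $T(xa,b)=x\,T(a,b)$ and $T(a,bx)=T(a,b)\,x$ for $x\in\mathcal{R}$, so $T$ is left $\mathcal{R}$-linear in $a$ and right $\mathcal{R}$-linear in $b$. Invoking the defining property of $\mathcal{U}$ --- module homomorphisms on a nonzero ideal are implemented by multiplication --- I obtain a unique $p\in\mathcal{U}$ with $T(a,b)=a\,p\,b$ for all $a,b\in I$, and I define $d(q):=p$. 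Independence of the choice of $I$ follows by comparing on the (nonzero) intersection of two such ideals and applying (ii); specializing to $q\in\mathcal{R}$ recovers the original $d$. Additivity of the extended map is immediate from additivity of $T$ in $q$ together with (ii).

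It remains to verify the Leibniz rule $d(q_1q_2)=d(q_1)\,q_2+q_1\,d(q_2)$ on all of $\mathcal{U}$. Choosing a single nonzero ideal $I$ with $q_1I$, $q_2I$, $q_1q_2I$ and the left analogues inside $\mathcal{R}$, I expand $d\bigl(a\,q_1q_2\,b\bigr)$ by grouping it as $d\bigl((aq_1)(q_2b)\bigr)$ and repeatedly applying the sandwich identity together with the mixed rule $d(rq)=d(r)\,q+r\,d(q)$ for $r\in\mathcal{R}$, $rq\in\mathcal{R}$; after cancellation this yields $a\bigl(d(q_1q_2)-d(q_1)q_2-q_1d(q_2)\bigr)b=0$ for all $a,b\in I$, and (ii) finishes the proof. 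I expect the genuine obstacle to be the existence step: realizing the biadditive map $T$ as a single element $p=d(q)\in\mathcal{U}$. This is exactly where the universal property of the Utumi quotient ring is indispensable, and where one must carefully check the two one-sided module-homomorphism properties of $T$ before the element $p$ can be extracted; once $d(q)$ is known to live in $\mathcal{U}$, every remaining axiom collapses to a Leibniz expansion in $\mathcal{R}$ made rigorous by the faithfulness in (ii).
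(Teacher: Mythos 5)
The paper gives no proof of this Fact at all: it is quoted as known, with a citation to Beidar (and the companion extension results of Lee cited in Fact~\ref{fac3}), so your attempt can only be compared with the standard published argument, which it essentially reconstructs. Your uniqueness step is exactly the standard one and is complete. For existence you take the two-sided ``sandwich'' route, whereas the cited literature argues one-sidedly: for $q\in\mathcal{U}$ one picks a dense right ideal $\rho$ with $q\rho\subseteq\mathcal{R}$, checks that $x\mapsto d(qx)-q\,d(x)$ is a right $\mathcal{R}$-module map of $\rho$ into $\mathcal{U}$, and extracts $d(q)$ from a single implementation. Your version needs two implementations, and the second is where your wording is too loose: ``module homomorphisms on a nonzero ideal are implemented by multiplication'' is the defining property for maps into $\mathcal{R}$, but after writing $T(a,b)=u_a b$ the assignment $a\mapsto u_a$ is a left module map with values in $\mathcal{U}$, not $\mathcal{R}$. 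That step is still true --- nonzero ideals of a prime ring are dense, and $\mathcal{U}$ is its own Utumi quotient ring, so module maps into $\mathcal{U}$ are likewise implemented --- but it must be invoked explicitly; the one-sided argument is precisely what streamlines it. Note also that your feature (i), a single ideal $I$ with $qI\subseteq\mathcal{R}$ and $Iq\subseteq\mathcal{R}$, is the symmetric-quotient-ring property; for the one-sided Utumi ring one works with dense one-sided ideals, and all of your computations (independence of $I$, additivity, the mixed rules $d(aq)=d(a)q+a\,d(q)$, and the sandwiched Leibniz identity) go through verbatim in that setting.
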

\begin{fac}\label{fac5}
	(Kharchenko [Theorem 2,\cite{kharchenko1978diff}]) Let $\mathcal{R}$ be a prime ring, $d$ a nonzero derivation on $\mathcal{R}$ and $\mathcal{I}$ a nonzero ideal of $\mathcal{R}$. If $\mathcal{I}$ satisfies the differential identity.
	\begin{equation*}
			\phi\big(\zeta_1,\zeta_2,\ldots,\zeta_n,d(\zeta_{1}),d(\zeta_{2})\ldots, d({\zeta_n})\big)=0
		\end{equation*}
	for any $\zeta_1,\ldots,\zeta_n\in \mathcal{I}$, then either
	\begin{itemize}
		\item $\mathcal{I}$ satisfies the generalized polynomial identity
		\begin{equation*}
			\phi(\zeta_1,\zeta_2\ldots,\zeta_n,\chi_1, \chi_2\ldots, \chi_n)=0
			\end{equation*} for all $\chi_1, \chi_2\ldots, \chi_n \in \mathcal{R}$.\\
		or
		\item $d$ is $\mathcal{U}$-inner i.e., for some $\mathcal{P}\in \mathcal{U}$, $d(\zeta) = [\mathcal{P}, \zeta]$ and $\mathcal{I}$ satisfies the generalized polynomial identity.
		\begin{equation*}
			\phi(\big(\zeta_1,\zeta_2,\ldots,\zeta_n,[\mathcal{P},\zeta_1],[\mathcal{P},\zeta_2]\ldots, [\mathcal{P},\zeta_n]\big)=0
		\end{equation*}
	\end{itemize}
\end{fac}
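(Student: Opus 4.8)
The plan is to reduce the whole problem to the Utumi quotient ring $\mathcal{U}$ and then split according to whether the derivation $d$ is inner or outer. First, by Fact \ref{fac4} the derivation $d$ extends uniquely to a derivation of $\mathcal{U}$, by Fact \ref{fac3} the ideal $\mathcal{I}$, the ring $\mathcal{R}$ and $\mathcal{U}$ satisfy the same differential identities, and by Fact \ref{fac2} they satisfy the same generalized polynomial identities. Hence it suffices to analyse $d$ acting on $\mathcal{U}$. The argument then hinges on the classical dichotomy for derivations of a prime ring: either $d$ is $\mathcal{U}$-inner, i.e. $d=[\mathcal{P},\cdot]$ for some $\mathcal{P}\in\mathcal{U}$, or $d$ is genuinely \emph{outer}.

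The inner branch is immediate and delivers the second alternative. If $d=[\mathcal{P},\cdot]$, then for every evaluation one has the literal equality $d(\zeta_i)=[\mathcal{P},\zeta_i]$. Substituting this into the hypothesis converts the differential identity into the honest generalized polynomial identity
$$\phi\big(\zeta_1,\ldots,\zeta_n,[\mathcal{P},\zeta_1],\ldots,[\mathcal{P},\zeta_n]\big)=0$$
valid for all $\zeta_i\in\mathcal{I}$, and hence for all $\zeta_i\in\mathcal{R}$ by Fact \ref{fac2}. This is precisely the second bullet, so nothing more is needed in this case.

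The outer branch produces the first alternative and is the substantive part. Here I want to show that when $d$ is not $\mathcal{U}$-inner the symbols $d(\zeta_i)$ behave like free indeterminates, so that the identity survives the replacement of each $d(\zeta_i)$ by an arbitrary $\chi_i$. To make this precise I would introduce fresh noncommuting indeterminates $\chi_1,\ldots,\chi_n$ standing for the derivatives and pass to the free product $\mathcal{U}\ast_{\mathcal{C}}\mathcal{C}\langle X_1,\ldots,X_n,\chi_1,\ldots,\chi_n\rangle$, interpreting the hypothesis as the vanishing, under every admissible substitution $X_i\mapsto\zeta_i$, $\chi_i\mapsto d(\zeta_i)$, of a fixed element $\Phi(X,\chi)$ of this free product. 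The claim to be established is that $\Phi(X_1,\ldots,X_n,\chi_1,\ldots,\chi_n)$ is itself a generalized polynomial identity of $\mathcal{U}$ in the doubled set of variables, with the $\chi_i$ free, which is exactly the first bullet.

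The crux, and the expected main obstacle, is proving this decoupling: that for an outer derivation the pair $(\zeta,d(\zeta))$ ranges over a set on which no nontrivial generalized polynomial relation is imposed beyond those already holding for independent variables. This is the technical heart of the theorem. The key input is the genericity of outer derivations: for a prime ring, a derivation fails to be $\mathcal{U}$-inner exactly when its values are ``independent'' of the arguments over the extended centroid $\mathcal{C}$, and one realizes arbitrary prescribed values of $d(\zeta_i)$ by an approximation argument inside $\mathcal{U}$ (or after passing to the differential polynomial overring $\mathcal{U}[t;d]$, which is prime and in which outerness forces $t$ to act freely). Concretely I would combine a linearization of $\Phi$ with a homogenization that separates the multihomogeneous components in the derivative variables $\chi_i$, then use primeness of $\mathcal{U}$ together with the fact that $\mathcal{C}$ is a field to conclude that each component must vanish separately; outerness is exactly what forbids the ``collapse'' of a derivative variable $\chi_i$ onto a commutator $[\mathcal{P},X_i]$ that occurs in the inner case. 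Assembling these pieces yields $\Phi\equiv 0$ as a generalized polynomial identity in the doubled variables, which is the first alternative, and completes the dichotomy.
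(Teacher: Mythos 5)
You should first note that the paper offers no proof of this statement at all: it is quoted as a known Fact, attributed to Kharchenko [Theorem 2, \cite{kharchenko1978diff}], so there is no internal argument to compare against. Judged on its own merits, your proposal has the right skeleton --- extend $d$ to $\mathcal{U}$ via Fact \ref{fac4}, transfer identities via Facts \ref{fac2} and \ref{fac3}, split on the inner/outer dichotomy, and dispose of the inner branch by literal substitution $d(\zeta_i)=[\mathcal{P},\zeta_i]$. That much is correct, and the inner branch genuinely is immediate.

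The outer branch, however, contains a genuine gap: the ``decoupling'' claim you isolate is not a step of the proof, it \emph{is} the theorem, and your justification for it is circular. You write that a derivation fails to be $\mathcal{U}$-inner ``exactly when its values are independent of the arguments over $\mathcal{C}$'' --- but that equivalence is precisely Kharchenko's independence theorem for outer derivations, the very thing to be established; invoking it assumes the conclusion. The machinery you gesture at does not fill the hole. Multihomogeneous separation in the $\chi_i$ (via a Vandermonde-type argument, which itself needs enough central scalars) only splits $\Phi$ by degree; it cannot show that a given homogeneous component vanishes when the $\chi_i$ are made free, which is the actual difficulty. Even the simplest nontrivial instance --- if $d$ is outer and $\sum_i a_i\, d(x)\, b_i = 0$ for all $x$, then $\sum_i a_i\, x\, b_i = 0$ --- is not derivable from primeness of $\mathcal{U}$ plus ``$\mathcal{C}$ is a field''; it requires the substantive apparatus of the theory of differential identities (Martindale-type quotient rings and a density theorem for the action of outer derivations, or Chuang's analysis in the differential operator ring). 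Your mention of $\mathcal{U}[t;d]$ points in the right direction, but ``outerness forces $t$ to act freely'' is again a restatement of the goal, not an argument: one must prove that no nonzero reduced differential polynomial of positive $t$-degree can annihilate a nonzero ideal unless $d$ is inner, typically by induction on the length of a reduced expression. As written, the proposal proves the easy half and labels the hard half as known.
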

\begin{fac}\label{fac6}
	Let $\mathcal{\zeta}=\{\zeta_1,\zeta_2,\ldots\}$ represents a countable set  of non-commuting indeterminates $\zeta_1,\zeta_2,\ldots$. Let $\mathcal{C}\{\mathcal{\zeta}\}$ denotes the free algebra over $\mathcal{C}$ on the set $\mathcal{\zeta}$ and $\mathcal{T}=\mathcal{U}\ast_\mathcal{C}\mathcal{C}\{\mathcal{\zeta}\}$, denotes the free product of the $\mathcal{C}$-algebras $\mathcal{U}$ and $\mathcal{C}\{\mathcal{\zeta}\}$. The members of $\mathcal{T}$ are known as the generalized polynomials with coefficients in $\mathcal{U}$. Let $\mathcal{B}$ be a set of $\mathcal{C}$-independent vectors of $\mathcal{U}$. Then any $\mathcal{G} \in \mathcal{T}$ can be expressed in the form $\mathcal{G}=\sum_i \beta_i \mathcal{V}_i$, where $\beta_i\in \mathcal{C}$ and $\mathcal{V}_i$ are $\mathcal{B}$-monomials of the form $a_0\xi_1a_1\xi_2a_2\cdots \xi_n a_n$, with $a_0, a_1,\ldots, a_n\in \mathcal{B}$ and $\xi_1, \xi_2,\ldots, \xi_n\in \chi$. Any generalized polynomial $g=\sum_i \beta_i \mathcal{V}_i$ is trivial i.e., zero element in $\mathcal{T}$ if and only if $\beta_i=0$ for each $i$. Further details can be found in \cite{C. L. Chuang. 1988}. If each  monomial of a generalized polynomial $\phi(\zeta_1,\ldots,\zeta_n)$ contains each $\zeta_i$ only once for $1\leq i\leq n$, then $\phi(\zeta_1,\ldots,\zeta_n)$ is said to be multilinear polynomial.
\end{fac}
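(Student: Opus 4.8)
The plan is to prove both assertions at once by exhibiting an explicit $\mathcal{C}$-basis of the free product $\mathcal{T}=\mathcal{U}\ast_{\mathcal{C}}\mathcal{C}\{\zeta\}$ consisting exactly of the $\mathcal{B}$-monomials: the spanning half yields the stated normal form, and the linear-independence half is precisely the triviality criterion. Throughout I would exploit that $\mathcal{C}$ is a field (being the extended centroid, it is the central subfield over which the free product is formed), so every $\mathcal{C}$-module in sight is free and tensor products over $\mathcal{C}$ carry the product of bases.

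First, for the existence of the expansion, I would note that $\mathcal{T}$ is generated as a $\mathcal{C}$-algebra by $\mathcal{U}$ together with the indeterminates $\zeta_i$, so every element is a $\mathcal{C}$-combination of products of factors drawn from $\mathcal{U}$ and from $\zeta$. Using that $\mathcal{U}$ is closed under multiplication, I collapse each maximal run of consecutive $\mathcal{U}$-factors into a single element of $\mathcal{U}$, reducing each product to the alternating shape $u_0\xi_1 u_1\cdots\xi_n u_n$ with $u_j\in\mathcal{U}$ and $\xi_j\in\zeta$ (inserting $u_j=1$ where a run is empty). Writing each $u_j=\sum_k c_{jk}a_{jk}$ with $c_{jk}\in\mathcal{C}$ and $a_{jk}\in\mathcal{B}$, multilinearity together with the centrality of $\mathcal{C}$ lets me pull every scalar to the front, exhibiting $\mathcal{G}$ as a $\mathcal{C}$-combination of $\mathcal{B}$-monomials $a_0\xi_1 a_1\cdots\xi_n a_n$.

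The substantive half is linear independence, for which I would use a concrete tensor-ring model of the free product. Let $V$ be the free $\mathcal{C}$-module on $\zeta$, so that $\mathcal{C}\{\zeta\}=T(V)$ is the tensor algebra. Because one factor is free, the universal property of $\ast_{\mathcal{C}}$ matches that of the tensor $\mathcal{U}$-ring $T_{\mathcal{U}}(M)$ on the $\mathcal{U}$-bimodule $M=\mathcal{U}\otimes_{\mathcal{C}} V\otimes_{\mathcal{C}}\mathcal{U}$: a $\mathcal{C}$-algebra map out of either object is the same datum as a $\mathcal{C}$-algebra map from $\mathcal{U}$ together with a $\mathcal{C}$-linear map from $V$, since a $\mathcal{U}$-bimodule map out of $M$ is determined by its restriction to $1\otimes V\otimes 1$. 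This yields a $\mathcal{C}$-algebra isomorphism $\mathcal{T}\cong T_{\mathcal{U}}(M)=\bigoplus_{n\geq 0} M^{\otimes_{\mathcal{U}} n}$ with $M^{\otimes_{\mathcal{U}} 0}=\mathcal{U}$. Collapsing each inner factor $\mathcal{U}\otimes_{\mathcal{U}}\mathcal{U}\cong\mathcal{U}$ identifies $M^{\otimes_{\mathcal{U}} n}$ with $\mathcal{U}\otimes_{\mathcal{C}} V\otimes_{\mathcal{C}}\mathcal{U}\otimes_{\mathcal{C}}\cdots\otimes_{\mathcal{C}} V\otimes_{\mathcal{C}}\mathcal{U}$ ($n$ copies of $V$ and $n+1$ of $\mathcal{U}$). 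Over the field $\mathcal{C}$ this tensor product has basis $\{a_0\otimes\xi_1\otimes\cdots\otimes\xi_n\otimes a_n : a_j\in\mathcal{B},\ \xi_j\in\zeta\}$, and under the isomorphism these basis vectors are exactly the $\mathcal{B}$-monomials. Hence the $\mathcal{B}$-monomials form a $\mathcal{C}$-basis of $\mathcal{T}$, so $\sum_i\beta_i\mathcal{V}_i$ vanishes if and only if every $\beta_i=0$. If $\mathcal{B}$ is only assumed $\mathcal{C}$-independent rather than a basis, I extend it to a basis $\mathcal{B}'$; the $\mathcal{B}$-monomials are then a subfamily of the $\mathcal{B}'$-basis and remain independent, which suffices for the triviality criterion.

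The main obstacle is exactly the faithfulness built into the independence half: a priori the relations defining the free product could force nontrivial $\mathcal{C}$-dependencies among reduced words, and the tensor-ring identification is what rules this out. The only delicate points are verifying the universal-property matching (equivalently, that $M$ is genuinely the $\mathcal{U}$-bimodule governing the free product over the free factor) and invoking that tensor products over the field $\mathcal{C}$ have the product basis — both clean precisely because $\mathcal{C}$ is a field. Once $\mathcal{T}\cong T_{\mathcal{U}}(M)$ is in hand, the grading by $n$ separates monomials of different $\xi$-degree automatically, and within a fixed degree the independence is the standard freeness of a tensor product of $\mathcal{C}$-vector spaces.
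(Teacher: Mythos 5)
Your argument is correct, but note that the paper itself offers no proof of this statement: it is presented as a background ``Fact'' and delegated wholesale to the literature (the citation to Chuang's 1988 paper on GPIs with coefficients in Utumi quotient rings, where the coproduct structure is developed following Beidar--Martindale--Mikhalev). What you have done is reconstruct, essentially verbatim, the standard proof from that literature: the identification of $\mathcal{T}=\mathcal{U}\ast_{\mathcal{C}}\mathcal{C}\{\zeta\}$ with the tensor ring $T_{\mathcal{U}}(M)$, $M=\mathcal{U}\otimes_{\mathcal{C}}V\otimes_{\mathcal{C}}\mathcal{U}$, via matching universal properties, followed by the collapse $M^{\otimes_{\mathcal{U}}n}\cong\mathcal{U}\otimes_{\mathcal{C}}V\otimes_{\mathcal{C}}\cdots\otimes_{\mathcal{C}}V\otimes_{\mathcal{C}}\mathcal{U}$ and the product-basis argument over the field $\mathcal{C}$, is exactly how the triviality criterion is established there. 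So your route is not genuinely different from the source the paper leans on; what it buys is self-containedness, making explicit the two points the citation hides (that the canonical map $\mathcal{U}\to\mathcal{T}$ is the degree-zero component, hence injective, and that the grading by $\xi$-degree separates monomials before the within-degree independence is invoked). You also correctly repair a sloppiness in the paper's own formulation: as stated, $\mathcal{B}$ is only a $\mathcal{C}$-independent set, under which the spanning claim is false as written; your reading (spanning requires $\mathcal{B}$ to be a basis, while the triviality criterion survives for any independent $\mathcal{B}$ by extending to a basis) is the intended one. The only step you flag but do not carry out, the verification that a $\mathcal{U}$-bimodule map out of $M$ is freely determined by its restriction to $1\otimes V\otimes 1$, is routine precisely because $\mathcal{C}$ is central in every $\mathcal{C}$-algebra target, so there is no genuine gap.
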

\begin{fac}\label{fac9}
	Let $\mathcal{K}$ be an infinite field and $m\geq 2$ an integer. If $\mathcal{P}_1,\ldots,\mathcal{P}_k$ are non-scalar matrices in $\mathcal{M}_m(\mathcal{K})$ then there exists some invertible matrix $\mathcal{P} \in \mathcal{M}_m(\mathcal{K})$ such that each matrix $\mathcal{P}\mathcal{P}_1\mathcal{P}^{-1},\ldots,\mathcal{P}\mathcal{P}_k\mathcal{P}^{-1}$ has all non-zero entries. \cite{de2012}
\end{fac}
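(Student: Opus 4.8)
The plan is to reduce the simultaneous statement to a single polynomial non-vanishing assertion, and then to a one-matrix, one-entry core claim. First I would introduce an $m\times m$ matrix $X=(x_{st})$ of commuting indeterminates over $\mathcal{K}$, and for each index $\ell\in\{1,\ldots,k\}$ and each position $(i,j)$ set $g_{\ell,i,j}(X):=\bigl(X\,\mathcal{P}_\ell\,\mathrm{adj}(X)\bigr)_{ij}\in\mathcal{K}[x_{st}]$, where $\mathrm{adj}(X)$ is the adjugate. For invertible $X$ one has $X\,\mathcal{P}_\ell\,\mathrm{adj}(X)=\det(X)\,X\mathcal{P}_\ell X^{-1}$, so $g_{\ell,i,j}(X)=\det(X)\,(X\mathcal{P}_\ell X^{-1})_{ij}$ there. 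If each $g_{\ell,i,j}$ is a \emph{nonzero} element of $\mathcal{K}[x_{st}]$, then so is the product $G(X):=\det(X)\prod_{\ell,i,j}g_{\ell,i,j}(X)$, since $\mathcal{K}[x_{st}]$ is an integral domain; because $\mathcal{K}$ is infinite, a nonzero polynomial in finitely many variables cannot vanish on all of $\mathcal{K}^{m^2}$, so there is $\mathcal{P}\in\mathcal{M}_m(\mathcal{K})$ with $G(\mathcal{P})\neq0$. For that $\mathcal{P}$ we have $\det(\mathcal{P})\neq0$ (so $\mathcal{P}$ is invertible) and every $(\mathcal{P}\mathcal{P}_\ell\mathcal{P}^{-1})_{ij}\neq0$, which is exactly the assertion. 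Thus everything comes down to the core claim: \emph{if $P\in\mathcal{M}_m(\mathcal{K})$ is non-scalar, then for each position $(i,j)$ there is an invertible $Q$ with $(QPQ^{-1})_{ij}\neq0$}, which is precisely what makes $g_{\ell,i,j}\neq0$.

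To prove the core claim I would write $(QPQ^{-1})_{ij}=r^{\mathsf T}Pc$, where $r^{\mathsf T}$ is the $i$-th row of $Q$ and $c$ is the $j$-th column of $Q^{-1}$; then $r^{\mathsf T}c=(QQ^{-1})_{ij}=\delta_{ij}$. Conversely, a short linear-algebra construction shows that every pair $(r,c)$ with $c\neq0$ and $r^{\mathsf T}c=\delta_{ij}$ arises this way from some invertible $Q$: one extends $c$ to a basis, imposes $Qc=e_j$, and fills in the remaining columns so that the $i$-th row of $Q$ equals $r^{\mathsf T}$ while keeping $Q$ invertible. Hence the claim fails at $(i,j)$ exactly when $r^{\mathsf T}Pc=0$ for all pairs with $r^{\mathsf T}c=\delta_{ij}$, and I would show this forces $P$ to be scalar. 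If $i=j$, fix any $c\neq0$; the linear functional $r\mapsto r^{\mathsf T}(Pc)$ vanishes on the affine hyperplane $\{r:r^{\mathsf T}c=1\}$, which does not pass through the origin, and a linear form vanishing on such a hyperplane is identically zero, so $Pc=0$ for every $c\neq0$ and $P=0$. If $i\neq j$, fix $c\neq0$; then $r^{\mathsf T}(Pc)=0$ for all $r$ in the hyperplane $c^{\perp}=\{r:r^{\mathsf T}c=0\}$, whence $Pc\in(c^{\perp})^{\perp}=\langle c\rangle$; as every nonzero vector is then an eigenvector of $P$, $P$ is scalar. In either case we contradict the non-scalarity of $P$, establishing the core claim.

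The genuinely delicate point will be the realizability step: confirming that the pairs $(r,c)$ extracted as the $i$-th row of $Q$ and the $j$-th column of $Q^{-1}$ range over \emph{all} pairs constrained only by $r^{\mathsf T}c=\delta_{ij}$, since it is this surjectivity that converts the hypothesis ``the entry is always zero'' into a statement quantified over all such $(r,c)$, and hence into the eigenvector condition. The construction must simultaneously prescribe the equation $Qc=e_j$, prescribe a row of $Q$, and keep $Q$ invertible, and it uses $m\geq2$ so that the relevant hyperplanes are proper and $e_j$ can be separated from the remaining image vectors; for $m=1$ every matrix is scalar and the statement is vacuous. The hypothesis that $\mathcal{K}$ is infinite enters only at the packaging stage, guaranteeing a $\mathcal{K}$-rational non-vanishing point of the nonzero polynomial $G$, and the characteristic plays no role. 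Finally, the core claim may be checked over any extension of $\mathcal{K}$ without loss, since a polynomial that is nonzero at a single point of an extension field is already nonzero as an element of $\mathcal{K}[x_{st}]$.
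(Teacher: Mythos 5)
The paper offers no proof of Fact \ref{fac9}: it is stated as a quoted result from \cite{de2012}, so there is no internal argument to compare against, and your proof is correct and essentially the standard one from the cited literature --- reduce everything to the non-vanishing of the polynomials $\det(X)$ and $\bigl(X\,\mathcal{P}_\ell\,\mathrm{adj}(X)\bigr)_{ij}$ in $\mathcal{K}[x_{st}]$ and use that a nonzero polynomial over an infinite field has a $\mathcal{K}$-rational non-vanishing point, with the core claim handled by the row/column bilinear analysis. The only nit is in the realizability step for $i\neq j$: a row of an invertible matrix is necessarily nonzero, so the pairs $(r,c)$ you can realize satisfy $r\neq 0$ as well as $r^{\mathsf T}c=0$; this is harmless, since the nonzero vectors of $c^{\perp}$ still span $c^{\perp}$ (here $m\geq 2$ is used), and your completion construction does go through once one picks the last basis vector $v$ with $r^{\mathsf T}v\neq 0$ to avoid the degenerate case in the inductive choice of columns.
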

\begin{fac}\label{fac 8}
Let $\mathcal{R}$ be a non-commutative prime ring  with $Char(R) \neq 2$, Utumi quotient ring $\mathcal{U}$ and extended centroid $\mathcal{C}$. Also let  $\phi(\zeta_1,\ldots,\zeta_n)$ be a multilinear polynomial over $\mathcal{C}$ which is not central valued on $\mathcal{R}$ and $a,b\in \mathcal{R}$ such that
$$a\phi(\zeta)^2+\phi(\zeta)b\phi(\zeta)=0, \forall \ \zeta=(\mathcal{\zeta}_1,\ldots,\mathcal{\zeta}_n)\in \mathcal{R}^n.$$
Then $a=-b\in  \mathcal{C}$ [\cite{V. D. Filippis 2021}, Corollary 1].
\end{fac}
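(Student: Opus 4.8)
The plan is to pass to the Utumi quotient ring and read the hypothesis as a generalized polynomial identity. By Fact~\ref{fac2} the relation $a\phi(\zeta)^2+\phi(\zeta)b\phi(\zeta)=0$ holds for all $\zeta\in\mathcal{U}^n$, so I view $g(\zeta):=a\phi(\zeta)^2+\phi(\zeta)b\phi(\zeta)$ as an element of the free product $\mathcal{T}=\mathcal{U}\ast_{\mathcal{C}}\mathcal{C}\{\zeta\}$ and invoke the triviality criterion of Fact~\ref{fac6}. If $a=b=0$ the conclusion is immediate, so assume otherwise, and write $\phi=\sum_{\sigma}\gamma_{\sigma}\,\zeta_{\sigma(1)}\cdots\zeta_{\sigma(n)}$ with $\gamma_{\mathrm{id}}=1$. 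I then split the argument according to whether $b\in\mathcal{C}$ or $b\notin\mathcal{C}$; the target is to rule out the second possibility and to extract $a+b=0$ from the first.

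In the case $b\in\mathcal{C}$ the identity collapses to $(a+b)\phi(\zeta)^2=0$ for all $\zeta$, since then $\phi b\phi=b\phi^{2}$. The only $\mathcal{B}$-monomial of $(a+b)\phi^{2}$ equal to the word $\zeta_1\cdots\zeta_n\zeta_1\cdots\zeta_n$ arises from the pair $(\mathrm{id},\mathrm{id})$ and carries coefficient $(a+b)$; by Fact~\ref{fac6} this forces $a+b=0$ unless $(a+b)\phi^{2}$ is a nontrivial generalized polynomial identity. In the latter situation $\mathcal{U}$ satisfies a nontrivial GPI, and by Martindale's theorem together with a scalar extension to the algebraic closure of $\mathcal{C}$ (under which $\phi$ stays non-central) I reduce to $\mathcal{U}=M_k(\mathcal{C})$ with $\mathcal{C}$ infinite and $k\ge 2$; there $\phi$ attains an invertible value $u$, so $(a+b)u^{2}=0$ gives $a+b=0$ all the same. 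Hence $a=-b\in\mathcal{C}$, the desired conclusion.

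In the case $b\notin\mathcal{C}$ I first note that $g$ is a nontrivial GPI: the monomial $\zeta_1\cdots\zeta_n\,b\,\zeta_1\cdots\zeta_n$ occurs in $\phi b\phi$ only from $(\mathrm{id},\mathrm{id})$, with coefficient $1$, and since its non-scalar coefficient $b$ sits in the middle slot whereas every monomial of $a\phi^{2}$ carries its coefficient only in front, it cannot cancel, so $g\neq 0$ in $\mathcal{T}$ by Fact~\ref{fac6}. Thus $\mathcal{U}$ satisfies a nontrivial GPI and, as above, I reduce to $\mathcal{U}=M_k(\mathcal{C})$, $\mathcal{C}$ infinite, $k\ge 2$. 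Because $\phi$ is non-central, its evaluation set $S$ is conjugation-invariant and, by the standard evaluation theory on matrix rings (conjugation-invariance together with Fact~\ref{fac9}), contains an off-diagonal matrix unit $e_{rs}$, $r\neq s$; conjugating by permutation matrices yields $e_{ij}\in S$ for all $i\neq j$. Substituting $u=e_{ij}$ (so $u^{2}=0$) into $au^{2}+ubu=0$ gives $e_{ij}be_{ij}=b_{ji}e_{ij}=0$, whence $b$ is diagonal. To finish, I fix a tail $(\zeta_2,\dots,\zeta_n)$ and use that the slot-wise map $x\mapsto\phi(x,\zeta_2,\dots,\zeta_n)$ is $\mathcal{C}$-linear, so its image is an additive subspace of $S$ on which polarization of $au^{2}+ubu=0$ yields $a(uv+vu)+ubv+vbu=0$; feeding in the matrix units above together with their conjugates by transvections $1+te_{kl}$, and using the infinitude of $\mathcal{C}$ to force the coefficients of the resulting polynomials in $t$ to vanish, forces all diagonal entries of $b$ to coincide and $a=-b$. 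But then $b\in\mathcal{C}$, contradicting $b\notin\mathcal{C}$, so this case is vacuous; combining the two cases gives $a=-b\in\mathcal{C}$.

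The hard part will be the matrix computation in the case $b\notin\mathcal{C}$: converting the conjugation-covariance and the slot-wise linearity of $\phi$ into enough independent scalar relations to conclude that $b$ is scalar and $a=-b$. The reduction to $M_k(\mathcal{C})$ through Martindale's theorem and scalar extension, and the evaluation lemma guaranteeing matrix-unit values of a non-central multilinear polynomial, are the standard but delicate ingredients underpinning this step, and it is here (in the polarization and the evaluation estimates) that the hypothesis $\mathrm{char}(\mathcal{R})\neq 2$ is used.
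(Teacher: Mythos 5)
First, a point of comparison: the paper does not prove Fact~\ref{fac 8} at all --- it is imported verbatim from [\cite{V. D. Filippis 2021}, Corollary~1] --- so your proposal can only be measured against the standard argument behind that citation, which uses the same toolkit as the paper's own Lemmas~\ref{lem2}--\ref{lem4}. Your skeleton (trivial-versus-nontrivial GPI dichotomy via Fact~\ref{fac6}, Martindale reduction, matrix-unit evaluations, transvection tricks over an infinite field) is the right one, and your nontriviality argument in the case $b\notin\mathcal{C}$ is sound. But the reduction step is stated incorrectly: Martindale's theorem \cite{martindale1969pr} plus scalar extension does \emph{not} give $\mathcal{U}=M_k(\mathcal{C})$; it gives a primitive ring with nonzero socle acting densely on a vector space that may be infinite dimensional, and one must then localize via Litoff's theorem \cite{c} to a corner $e\mathcal{R}e\cong M_l(\mathcal{C})$ and transport both the hypothesis and the conclusion back (this is exactly Case~2 of the paper's Lemma~\ref{lem4}, which you omit). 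Moreover, passing to $\mathcal{U}\otimes_{\mathcal{C}}\bar{\mathcal{C}}$ is not automatic for a non-multilinear GPI: one needs the complete linearization $\Phi$ with $\Phi(\zeta,\ldots,\zeta)=2^n h(\zeta)$ and $char(\mathcal{R})\neq 2$, as in Case~2 of Lemma~\ref{lem2}; your parenthetical ``under which $\phi$ stays non-central'' does not address this.

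Two further gaps are more serious. In your case $b\in\mathcal{C}$, the pivotal claim that a non-central multilinear polynomial attains an \emph{invertible} value on $M_k(\mathcal{C})$ is not a quotable standard fact (images of multilinear polynomials on matrix algebras are exactly the territory of the open L'vov--Kaplansky problem); what is standard, and what the paper uses via \cite{lee1992semiprime} in Lemma~\ref{lem2}, is that the values include $\mathcal{A}e_{ij}$ --- whose square is zero and yields nothing here. (Your citation of Fact~\ref{fac9} for matrix-unit values is also off: that fact is the conjugation statement of \cite{de2012}.) The step is repairable --- from $(a+b)u^2=0$ and conjugation-invariance of the value set one gets $(a+b)gu^2g^{-1}=0$ for all invertible $g$, hence $(a+b)M_k(\mathcal{C})u^2=0$ by Zariski density over the infinite field, and primeness kills $a+b$ once a single value has $u^2\neq 0$; but excluding ``all values square-zero'' itself requires a nilpotent-values theorem, so as written this is a genuine hole. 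Finally, in the case $b\notin\mathcal{C}$, the concluding computation you commit to does not close: once $b$ is diagonal, conjugating only the \emph{values} by transvections keeps them square-zero rank-one nilpotents, and sandwiching a diagonal $b$ between such elements returns $0=0$ identically --- for instance $u'=(1+te_{ki})e_{ij}(1-te_{ki})=e_{ij}+te_{kj}$ gives $u'bu'=0$ --- so no relation among the diagonal entries of $b$ is produced. The correct standard move, which is what the quoted source does, is to conjugate the \emph{identity}: use invariance of $\phi(\mathcal{R}^n)$ to replace $(a,b)$ by $(gag^{-1},gbg^{-1})$ with $g=1+te_{kl}$, note $gbg^{-1}=b+t(b_l-b_k)e_{kl}$ for diagonal $b$, and reapply the already-proved vanishing of off-diagonal entries to force $b_l=b_k$. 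Your phrase ``conjugation-covariance'' gestures at this, but the polarization-plus-conjugated-matrix-units scheme you actually describe provably yields only trivial relations, so the ``hard part'' you defer is not merely unfinished --- the route chosen for it fails.
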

\begin{fac}\label{fac 9}
If $d$ is a non-zero skew derivation on a prime ring $\mathcal{R}$ then associated automorphism $\alpha$ is unique.
\end{fac}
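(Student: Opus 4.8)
The plan is to prove uniqueness directly: assume the nonzero skew derivation $d$ is associated with two automorphisms $\alpha$ and $\beta$, and deduce $\alpha=\beta$. Writing the defining identity of $d$ relative to each automorphism gives, for all $x,y\in\mathcal{R}$,
$$d(xy)=d(x)y+\alpha(x)d(y)\qquad\text{and}\qquad d(xy)=d(x)y+\beta(x)d(y).$$
Subtracting these two expressions for $d(xy)$ cancels the common terms $d(xy)$ and $d(x)y$, leaving the clean relation
$$(\alpha(x)-\beta(x))\,d(y)=0\qquad\text{for all }x,y\in\mathcal{R}.$$
Setting $\gamma(x):=\alpha(x)-\beta(x)$, the problem reduces to showing $\gamma(x)=0$ for every $x$.

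The one obstacle is that $\gamma(x)\,d(y)=0$ is a bare left-annihilator statement with nothing between the two factors, so primeness cannot be invoked at once. The key maneuver is to create a middle factor. Replacing $y$ by $yz$ and expanding $d(yz)=d(y)z+\alpha(y)d(z)$ (using that $d$ is a skew derivation with respect to $\alpha$) yields
$$0=\gamma(x)\,d(yz)=\gamma(x)\,d(y)\,z+\gamma(x)\,\alpha(y)\,d(z).$$
The first summand is zero because $\gamma(x)d(y)=0$, so $\gamma(x)\,\alpha(y)\,d(z)=0$ for all $x,y,z\in\mathcal{R}$. Since $\alpha$ is an automorphism of $\mathcal{R}$, the values $\alpha(y)$ exhaust $\mathcal{R}$, whence $\gamma(x)\,\mathcal{R}\,d(z)=0$ for all $x,z\in\mathcal{R}$.

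Finally, because $d\neq 0$, fix $z_0$ with $d(z_0)\neq 0$. For every $x$ the identity $\gamma(x)\,\mathcal{R}\,d(z_0)=0$ together with the primeness of $\mathcal{R}$ forces $\gamma(x)=0$. Hence $\alpha(x)=\beta(x)$ for all $x\in\mathcal{R}$, i.e.\ $\alpha=\beta$, which is exactly the asserted uniqueness. I expect the argument to be entirely elementary: the only nontrivial ideas are the substitution $y\mapsto yz$ that splits off a free middle factor and the surjectivity of $\alpha$ that turns $\alpha(y)$ into an arbitrary element of $\mathcal{R}$, after which primeness finishes at once. The same computation goes through verbatim if one prefers to regard $d$ as taking values in $\mathcal{U}$, since $\mathcal{U}$ is again prime and $\alpha$ extends to $\mathcal{U}$, so no additional structure theory is required.
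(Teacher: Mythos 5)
Your argument is correct, and it is the standard proof of this uniqueness fact. The paper states Fact~\ref{fac 9} without any proof (it is quoted as a known preliminary), so there is no internal proof to compare against; your write-up simply supplies the omitted justification, and the two key moves you identify --- the substitution $y\mapsto yz$ to manufacture a middle factor, and the surjectivity of $\alpha$ to turn $\gamma(x)\,\alpha(y)\,d(z)=0$ into $\gamma(x)\,\mathcal{R}\,d(z)=0$ --- are exactly what make primeness applicable.

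One small refinement to your closing remark about $\mathcal{U}$-valued skew derivations: there the elements $\gamma(x)$ and $d(z)$ lie in $\mathcal{U}$ while the middle factor $\alpha(y)$ still ranges only over $\mathcal{R}$, so primeness of $\mathcal{U}$ by itself (which concerns $a\,\mathcal{U}\,b=0$) is not literally what you need. What finishes the argument is the standard stronger property of the Utumi quotient ring: for $a,b\in\mathcal{U}$, the condition $a\,\mathcal{R}\,b=0$ already forces $a=0$ or $b=0$ (a consequence of the defining density properties of $\mathcal{U}$ over $\mathcal{R}$, cf.\ \cite{beidar1995}). With that citation in place of ``$\mathcal{U}$ is again prime,'' your proof goes through verbatim in the $\mathcal{U}$-valued setting as well.
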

Let $\phi^ d (\zeta_1,\ldots,\zeta_n)$ be the polynomial originated from $\phi(\zeta_1,\ldots,\zeta_n)$ by replacing
each coefficient $\gamma_{\tau}$ with $d(\gamma_{\tau})$. We notice that
\begin{align*}
    d(\gamma_{\tau}\zeta_{\tau(1)},\ldots,\zeta_{\tau(n)} )= d(\gamma_{\tau})\zeta_{\tau(1)},\ldots,\zeta_{\tau(n)} \\
    +\gamma_{\tau}\sum\limits_{i=0}^{n-1}\zeta_{\tau(1)}\ldots ,\ldots,d(\zeta_{\tau(j)}),\zeta_{\tau(j+1)}\ldots,\zeta_{\tau(n)}
\end{align*}
and 
\begin{align*}
d(\phi(\zeta_1,\ldots,\zeta_n) )= \phi^d (\zeta_1,\ldots,\zeta_n) +\sum\limits_{\tau\in \mathcal{S}_n}\gamma_{\tau}\sum\limits_{i=0}^{n-1}\zeta_{\tau(1)}\ldots ,\ldots,d(\zeta_{\tau(j)}),\zeta_{\tau(j+1)}\ldots,\zeta_{\tau(n)}
\end{align*}
Let $\mathbb{Z}\langle \zeta_1,\ldots,\zeta_k\ldots\rangle$ be the free algebra on the countable set $(\zeta_1,\ldots,\zeta_k\ldots)$ over the set of integers $\mathbb{Z}$ and $\phi(\zeta_1,\ldots,\zeta_n)$ be a polynomial such that at least one of its monomials of highest degree has coefficient $1$. Let $\mathcal{S}$ be a nonempty subset of a ring $\mathcal{R}$. We say that $\phi$ is a polynomial identity on  $\mathcal{S}$  if  $\phi(\zeta_1,\ldots,\zeta_n)=0$ for all $\zeta_1,\ldots,\zeta_n\in \mathcal{S}$. Now we give the definition of multilinear polynomials.
\begin{defn}
 A polynomial  $\phi(\zeta_1,\ldots,\zeta_k,\ldots )\in \mathbb{Z}\langle \zeta_1,\ldots,\zeta_k,\ldots \rangle$ is said to be  multilinear if every  $\zeta_i , i=1,2,\ldots,n$ appears exactly once in each of the monomials of $\phi$.
\end{defn}
Note that in this article $\mathcal{R}$ always denotes a non-trivial and associative prime ring (unless otherwise stated) and we use the abbreviation GPI for generalized polynomial identity. 
\section*{Inner Case}
In this section, we study the case when $\mathcal{F}$ and $\mathcal{G}$ are $b$-generalized skew inner derivations. Suppose $\mathcal{G}(\chi) = c\chi+b^{\prime}\alpha(\chi)b$ and $\mathcal{F}(\chi) = a\chi+\tilde{b}\alpha(\chi)d$, for all $\chi \in \mathcal{R}$ and for some $a, b, c, d, b^{\prime}, \tilde{b} \in \mathcal{U}$ and $\alpha$ is an automorphism of $\mathcal{R}$ then we prove the following Proposition:

\begin{prop}\label{prop1}
	Let $\mathcal{R}$ be a prime ring of characteristic not equal to $2$ with Utumi quotient ring $\mathcal{U}$, extended centroid $\mathcal{C}$ and $\phi(\zeta_1,\ldots,\zeta_n)$ be a non central multilinear polynomial over $\mathcal{C}$. Let $\mathcal{G}$ and $\mathcal{F}$ be two $b$-generalized skew inner derivations on $\mathcal{R}$ with associated terms $(b^{\prime}, \alpha)$,  and  $(\tilde{b},\alpha)$ respectively such that $$\mathcal{F}(u)u+u \mathcal{G}(u)=\mathcal{G}(u^2)$$ for all $u\in\{\phi(\zeta)|\zeta=(\zeta_1\ldots,\zeta_n) \in \mathcal{R}^n\}$. Then one of the following holds:
		\begin{enumerate}
	    \item There exist $a,\tilde{a},c,\tilde{c} \in \mathcal{U}$ such that $\mathcal{F}(\chi)= a\chi+\chi\tilde{a}$ and $\mathcal{G}(\chi)=c\chi+\chi\tilde{c}$ for all $\chi \in \mathcal{R}$ with $  \tilde{a}+c \in \mathcal{C}$ and $\tilde{a}+a=0$.
	    \item There exist $a,\tilde{a},\hat{c} \in \mathcal{U}$ such that $\mathcal{F}(\chi)= a\chi+\chi\tilde{a}$ and $\mathcal{G}(\chi)=\hat{c}\chi$ for all $\chi \in \mathcal{R}$ with $\tilde{a}+\hat{c}\in \mathcal{C}$ and $\tilde{a}+a=0$.
	   \item There exist $c\in \mathcal{C}$ and $\tilde{c}\in \mathcal{U}$ such that $\mathcal{F}(\chi)= 0$ and $\mathcal{G}(\chi)=c\chi+\chi\tilde{c}$ for all $\chi \in \mathcal{R}$.
	    \item There exist $\tilde{c}\in \mathcal{C}$ such that $\mathcal{F}(\chi)= 0$ and $\mathcal{G}(\chi)=\tilde{c}\chi$ for all $\chi \in \mathcal{R}$.
	  	   	      	    \end{enumerate}
	\end{prop}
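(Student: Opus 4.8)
The plan is to substitute the assumed inner shapes of $\mathcal{F}$ and $\mathcal{G}$ straight into the hypothesis and reduce everything to a single generalized polynomial identity. Writing $u=\phi(\zeta)$ and using $\mathcal{G}(u^{2})=cu^{2}+b'\alpha(u)^{2}b$ (which follows from $\alpha(u^{2})=\alpha(u)^{2}$), the relation $\mathcal{F}(u)u+u\mathcal{G}(u)=\mathcal{G}(u^{2})$ becomes
\begin{equation}
(a-c)\phi(\zeta)^{2}+\phi(\zeta)c\phi(\zeta)+\tilde{b}\,\alpha(\phi(\zeta))\,d\,\phi(\zeta)+\phi(\zeta)b'\alpha(\phi(\zeta))b-b'\alpha(\phi(\zeta))^{2}b=0
\end{equation}
for all $\zeta\in\mathcal{R}^{n}$, where (the coefficients of $\phi$ lying in $\mathcal{C}$) $\alpha(\phi(\zeta))=\phi(\alpha(\zeta_{1}),\ldots,\alpha(\zeta_{n}))$. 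By Fact~\ref{fac2} this GPI passes to $\mathcal{U}$, so I work over $\mathcal{U}$ throughout; by Fact~\ref{fac 9} the single automorphism $\alpha$ attached to both maps is well defined. The whole proof then splits according to whether $\alpha$ is $\mathcal{U}$-inner or outer.

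If $\alpha$ is \emph{outer}, the theory of generalized polynomial identities with automorphisms (Chuang, cf.\ Fact~\ref{fac6}) lets me replace $\alpha(\zeta_{1}),\ldots,\alpha(\zeta_{n})$ by a second independent family $\eta_{1},\ldots,\eta_{n}$, so that $\alpha(\phi(\zeta))$ becomes $\phi(\eta)$. Specialising $\eta=0$ and using multilinearity ($\phi(0)=0$) collapses the identity to $(a-c)\phi(\zeta)^{2}+\phi(\zeta)c\phi(\zeta)=0$, whence Fact~\ref{fac 8} gives $a-c=-c\in\mathcal{C}$, i.e.\ $a=0$ and $c\in\mathcal{C}$. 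Feeding this back leaves $\tilde{b}\phi(\eta)d\phi(\zeta)+\phi(\zeta)b'\phi(\eta)b-b'\phi(\eta)^{2}b=0$; setting $\zeta=0$ forces $b'\phi(\eta)^{2}b=0$, and then primeness of $\mathcal{U}$ together with the fact that the evaluations of a noncentral multilinear polynomial span a noncentral Lie ideal lets me peel off $\tilde{b},d,b',b$ one at a time, forcing $\mathcal{F}=0$ and $\mathcal{G}(\chi)=\tilde{c}\chi$ with $\tilde{c}\in\mathcal{C}$ — that is, conclusion~(4).

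If $\alpha$ is \emph{inner}, say $\alpha(\chi)=q\chi q^{-1}$ for an invertible $q\in\mathcal{U}$, then $\alpha(\phi(\zeta))=q\phi(\zeta)q^{-1}$ and (1) becomes a genuine GPI in $u=\phi(\zeta)$ with coefficients assembled from $a,c,\tilde{b},d,b',b,q$; absorbing $q$ rewrites $\mathcal{F}(\chi)=a\chi+(\tilde{b}q)\chi(q^{-1}d)$ and similarly for $\mathcal{G}$. Fact~\ref{fac6} then gives two sub-cases: either the generalized polynomial is trivial, so comparing $\mathcal{B}$-monomials yields the scalar relations directly, or it is a nontrivial GPI, whereupon Martindale's theorem makes $\mathcal{U}$ primitive with nonzero socle and division part finite-dimensional over $\mathcal{C}$; after a central (field) extension I may assume $\mathcal{U}=M_{k}(\mathcal{C})$ with $\mathcal{C}$ infinite. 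There I would use Fact~\ref{fac9} to conjugate the relevant elements to matrices with all entries nonzero and evaluate $\phi$ on matrix units, equating coefficients to force $\tilde{b}q$ (resp.\ $b'q$) to be scalar so that the skew term degenerates to a right multiplication $\chi\tilde{a}$ (resp.\ $\chi\tilde{c}$); this produces exactly the shapes and constraints $\tilde{a}+a=0$ with $\tilde{a}+c\in\mathcal{C}$ of conclusion~(1), and the degenerations $\tilde{a}+\hat{c}\in\mathcal{C}$ of (2) and $\mathcal{F}=0$ of (3).

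The main obstacle I anticipate is precisely this inner/matrix sub-case: reducing cleanly to $M_{k}(\mathcal{C})$ over an infinite field (handling the passage to a large field so that Fact~\ref{fac9} applies) and then carrying out the matrix-unit bookkeeping without losing track of the conjugator $q$. The delicate point is to show the skew inner derivation genuinely collapses to an ordinary generalized inner derivation, i.e.\ that $(\tilde{b}q)\chi(q^{-1}d)$ can be rewritten as $a\chi+\chi\tilde{a}$, and to extract the \emph{exact} centrality conditions $\tilde{a}+c\in\mathcal{C}$ and $\tilde{a}+a=0$ rather than weaker memberships; low-dimensional degeneracies (e.g.\ $k=1$, excluded since $\mathcal{U}$ would be commutative, or the small cases where $\phi$ might become central) must be ruled out separately. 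The hypothesis $char(\mathcal{R})\neq2$ is what legitimises the linearisations and the application of Fact~\ref{fac 8} throughout.
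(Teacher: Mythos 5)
Your plan follows the same route as the paper: substitute the inner forms, split on whether $\alpha$ is $\mathcal{U}$-inner or outer, dispose of the outer case by Chuang's theorem (replacing the $\alpha$-twisted variables by independent ones) together with Fact~\ref{fac 8}, and in the inner case absorb the conjugator and analyse the resulting identity through a trivial-GPI/Martindale dichotomy. Your outer branch is essentially the paper's argument: what you call ``peeling off $\tilde{b},d,b^{\prime},b$ by primeness'' is carried out there by quoting Lemma~3 and Note~16 of \cite{V. D. Filippis 2021}, which turn the relation $A\phi(\zeta)+\phi(\zeta)B=0$ (with $A=\tilde{b}\phi^{\alpha}(\eta)d$ and $B=b^{\prime}\phi^{\alpha}(\eta)b$ fixed) into $A=-B\in\mathcal{C}$ and then annihilate the skew parts; this is a citable standard fact, so that branch is acceptable in outline and lands, as in the paper, on conclusion (4).

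The inner branch, however, has two genuine gaps. First, Martindale's theorem does not let you ``assume $\mathcal{U}=M_k(\mathcal{C})$'': it gives a primitive ring with nonzero socle acting densely on a vector space $\mathcal{W}$ whose associated division ring is finite-dimensional over $\mathcal{C}$, and when $\dim\mathcal{W}$ is infinite no central field extension converts this into a matrix ring. The paper needs a separate argument exactly here (Lemma~\ref{lem4}, Case~2): supposing all four coefficients noncentral, it picks witnesses in $soc(\mathcal{R})$ commuting nontrivially with them, uses Litoff's theorem to produce an idempotent $a$ with $a\mathcal{R}a\cong M_l(\mathcal{C})$ containing the relevant products, restricts the identity to $a\mathcal{R}a$ via the evaluations $\phi(a\zeta_1a,\ldots,a\zeta_na)$, and only then invokes the matrix lemma; this step is absent from your proposal. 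Second, the matrix-unit computation cannot ``force $\tilde{b}q$ (resp.\ $b^{\prime}q$) to be scalar.'' Evaluating at $\phi=e_{ij}$ and right-multiplying by $e_{ij}$ kills every term except the one yielding $(a_5^{\prime})_{ji}(a_6^{\prime})_{ji}=0$, so what one actually obtains is a disjunction: at least one element of each of the pairs $\{\tilde{b}t,\,t^{-1}b\}$ and $\{b^{\prime}t,\,t^{-1}d\}$ is central (this is the whole content of Lemmas~\ref{lem2}--\ref{lem4}). That disjunction is not a technicality---it is precisely what generates the four distinct conclusions: centrality of $\tilde{b}t$ (resp.\ $b^{\prime}t$) collapses the corresponding skew term to a right multiplication, while centrality of $t^{-1}b$ (resp.\ $t^{-1}d$) collapses it to a left multiplication, which combined with the constraint $\tilde{b}b+a=0$ coming from Fact~\ref{fac 8} is what forces $\mathcal{F}=0$ in conclusions (3)--(4) and makes $\mathcal{G}$ a left multiplication in (2) and (4). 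As described, your mechanism only ever produces output of the shape in conclusion (1), and the ``degenerations'' you invoke to reach (2) and (3) are left underived.
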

To prove the above-mentioned proposition, we need the following lemmas.
\begin{lem}\label{lem2}
   Let $\mathcal{R} = \mathcal{M}_d(\mathbb{K})$ be the ring of all $d\times d$ matrices over the field $\mathbb{K}$ with $Char(\mathbb{K})\neq 2,\ d \geq 2$ and $f(\chi_1,\ldots,\chi_n)$ be a non central multilinear polynomial over $\mathbb{K}$. Let $a_1,a_2, \ldots,a_6 \in \mathcal{U}$ such that $ a_1f(\zeta)^2 +a_2f(\zeta)a_3f(\zeta)+f(\zeta)a_5f(\zeta)a_6+f(\zeta)a_4f(\zeta)-a_5f(\zeta)^2a_6=0$ for all $\mathcal{\zeta} =(\mathcal{\zeta}_1,\ldots,\mathcal{\zeta}_n)\in \mathcal{R}^{n}$. Then one of the following holds:
\begin{enumerate}
    \item $a_2,a_5 \in \mathbb{K}.I_{d}$.
    \item $a_2, a_6 \in \mathbb{K}.I_{d}$.
    \item $a_3,a_5 \in \mathbb{K}.I_{d}$.
    \item $a_3, a_6 \in \mathbb{K}.I_{d}$.
\end{enumerate}
\end{lem}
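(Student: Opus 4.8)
The plan is to exploit that here $\mathcal{U}=\mathcal{R}=\mathcal{M}_d(\mathbb{K})$, so each $a_i$ is an honest matrix, and to convert the hypothesis into entrywise relations by evaluating $f$ at nilpotent matrix units. First I would reduce to the case in which $\mathbb{K}$ is infinite: writing $(r,s)$-entries and noting that the left-hand side of the hypothesis is homogeneous of degree two in each $\zeta_i$, a standard polarization argument (valid since the characteristic is different from $2$) extends the identity from $\mathcal{M}_d(\mathbb{K})$ to $\mathcal{M}_d(\mathbb{E})$ for an infinite extension $\mathbb{E}\supseteq\mathbb{K}$, which is what is needed to apply Fact~\ref{fac9}. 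I would also record the conjugation symmetry of the hypothesis: substituting $\zeta_i\mapsto P\zeta_iP^{-1}$ for an invertible $P$ and using $f(P\zeta_1P^{-1},\ldots,P\zeta_nP^{-1})=Pf(\zeta)P^{-1}$, the identity is seen to hold verbatim with every $a_i$ replaced by $P^{-1}a_iP$.

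Next I would feed in nilpotent values. Since $f$ is non-central, its value set is a conjugation-invariant set containing a nonzero nilpotent, so $f$ attains the value $e_{12}$, and conjugating by permutation matrices shows $f$ attains every off-diagonal matrix unit $e_{ij}$ with $i\neq j$. Fixing such a pair and substituting $f=e_{ij}$, the relations $e_{ij}^2=0$ kill the two quadratic terms $a_1f^2$ and $a_5f^2a_6$, and the hypothesis collapses to
\[
a_2\,e_{ij}\,a_3\,e_{ij}+e_{ij}\,a_5\,e_{ij}\,a_6+e_{ij}\,a_4\,e_{ij}=0 .
\]
Expanding with the identities $e_{ij}\,a\,e_{ij}=(a)_{ji}\,e_{ij}$, $\;a\,e_{ij}=\sum_k (a)_{ki}e_{kj}$ and $e_{ij}\,a=\sum_l (a)_{jl}e_{il}$, where $(a)_{rs}$ is the $(r,s)$-entry, and comparing coefficients of the matrix units yields, for every $i\neq j$,
\[
(a_3)_{ji}\,(a_2)_{ki}=0\ \ (k\neq i),\qquad (a_5)_{ji}\,(a_6)_{jl}=0\ \ (l\neq j),
\]
while the coefficient of $e_{ij}$ only constrains $a_4$ and is irrelevant to the conclusion (as are $a_1$ and the diagonal entries, which never enter).

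Finally I would run the conjugation argument to upgrade these entrywise relations to the stated global conclusion. Suppose both $a_2$ and $a_3$ are non-scalar. By Fact~\ref{fac9} there is an invertible $P$ with $P^{-1}a_2P$ and $P^{-1}a_3P$ having all entries nonzero; by the conjugation symmetry recorded above the first relation persists for these conjugated matrices, forcing $(P^{-1}a_3P)_{ji}(P^{-1}a_2P)_{ki}=0$ with both factors nonzero and such indices available because $d\geq 2$ — a contradiction. Hence at least one of $a_2,a_3$ lies in $\mathbb{K}\cdot I_d$, a property preserved under conjugation. The identical argument applied to the second relation forces at least one of $a_5,a_6$ into $\mathbb{K}\cdot I_d$. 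Combining the two dichotomies gives precisely the four listed cases. The main obstacle is securing the two inputs of the opening moves, namely that $f$ realizes an off-diagonal matrix unit (equivalently a nonzero nilpotent value) and the passage to an infinite field; once these are in place, the matrix-unit bookkeeping and the Fact~\ref{fac9} conjugation step are routine.
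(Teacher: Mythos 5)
Your proposal is correct and follows essentially the same route as the paper: reduce the finite-field case to the infinite case by linearizing the degree-$(2,\ldots,2)$ identity (using $\mathrm{char}\,\mathbb{K}\neq 2$), substitute matrix-unit values $f(\zeta)=e_{ij}$ to kill the $f^2$-terms, and invoke Fact~\ref{fac9} together with conjugation invariance to contradict non-scalarity. The only difference is organizational — the paper first right-multiplies by $e_{ij}$ to settle the pair $(a_5,a_6)$ and then repeats the argument for $(a_2,a_3)$ in the reduced identity, whereas you extract both entrywise dichotomies simultaneously from one coefficient comparison — and note that your parenthetical ``nonzero nilpotent value'' should really be the standard stronger fact (also used by the paper) that a non-central multilinear polynomial attains a value $\alpha e_{ij}$, since for $d\geq 3$ a nilpotent of higher rank is not conjugate to a matrix unit.
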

\begin{proof}
From the hypothesis $\mathcal{R}$ satisfies the following generalized polynomial identity:
\begin{align}\label{eq1a}
    h(\mathcal{\zeta}_1,\ldots,\mathcal{\zeta}_n) = a_1f(\zeta)^2 +a_2f(\zeta)a_3f(\zeta)+f(\zeta)a_5f(\zeta)a_6+f(\zeta)a_4f(\zeta)-a_5f(\zeta)^2a_6
\end{align}
for all $\mathcal{\zeta} =(\mathcal{\zeta}_1,\ldots,\mathcal{\zeta}_n)\in \mathcal{R} ^{n}$. 

\textbf{Case 1:} Suppose $\mathbb{K}$ is infinite. At first glance, we show that either $a_5$ or $a_6$ is central. Assume that none of $a_5,a_6$ is central. Then we shall prove that this case leads to a contradiction.
From Fact \ref{fac9}, there exists an automorphism $\rho\in Aut(\mathcal{R})$ such that $\rho(a_5)=a_5'$ and $\rho(a_6)=a_6'$, have all non-zero entries. Now since the Equation (\ref{eq1a}) is invariant under the action of the automorphism $\rho$, $\mathcal{R}$ satisfies
\begin{align}\label{eq1b}
a_1'f(\zeta)^2 +a_2'f(\zeta)a_3'f(\zeta)+f(\zeta)a_5'f(\zeta)a_6'+f(\zeta)a_4'f(\zeta)-a_5'f(\zeta)^2a_6'=0
\end{align}
where $\rho(a_l)=a_l^{\prime}$,  $1\leq l\leq 6$. Since $f(\zeta), \ \zeta=(\mathcal{\zeta}_1,\ldots,\mathcal{\zeta}_n)$ is non central, by \cite{lee1992semiprime} there exists a sequence of matrices $\zeta=(\mathcal{\zeta}_1,\ldots,\mathcal{\zeta}_n) \in \mathcal{R}$ such that $f (\zeta) = \mathcal{A}e_{ij}$ with non-zero $\mathcal{A}\in \mathbb{K}$ and $i \neq j$. Here $e_{ij}$ denotes the matrix whose $(i, j)$-entry is $1$, and the rest entries are zero. Since $f (\mathcal{R}) = \{f (\chi_1 ,\ldots, \chi_n ) :\chi_i \in \mathcal{R}\}$ is invariant under the action of all inner automorphisms of $\mathcal{\mathcal{R}}$, , for $i\neq j$, there exists a sequence of matrices $\zeta=(\mathcal{\zeta}_1,\ldots,\mathcal{\zeta}_n)$ in $\mathcal{R}$ such that $f (\mathcal{\zeta}_1,\ldots,\mathcal{\zeta}_n) = e_{ij}$. Thus from Equation (\ref{eq1b}), we get
\begin{align}\label{eq1c}
a_2'e_{ij}a_3'e_{ij}+e_{ij}a_5'e_{ij}a_6'+e_{ij}a_4'e_{ij}=0
\end{align}
Now right multiplying by $e_{ij}$ in Equation (\ref{eq1c}), we get
\begin{align*}
e_{ij}a_5'e_{ij}a_6'e_{ij}=(a_5')_{ji}(a_6')_{ji}e_{ij}=0
\end{align*}
It implies either $(a_5')_{ji} = 0$ or $(a_6')_{ji} = 0$. In either case, we have a contradiction.  Thus  either $a_5\in \mathbb{K}.I_{d}$ or $a_6\in \mathbb{K}.I_{d}$. Now if $a_5\in  \mathbb{K}.I_{d}$ then Equation (\ref{eq1a}) reduces to
\begin{align*}
     a_1f(\zeta)^2 +a_2f(\zeta)a_3f(\zeta)+f(\zeta)a_4f(\zeta)=0
\end{align*}
for all $\mathcal{\zeta} =(\mathcal{\zeta}_1,\ldots,\mathcal{\zeta}_n)\in \mathcal{R} ^{n}$. Again by parallel arguments we can show that either $a_2\in \mathbb{K}.I_{d}$ or $a_3\in \mathbb{K}.I_{d}$.  Similarly, if $a_6\in  \mathbb{K}.I_{d}$ then we can show that either $a_2\in \mathbb{K}.I_{d}$ or $a_3\in \mathbb{K}.I_{d}$. Combining all the above outcomes we get our required results.
	
\textbf{Case 2:}	Suppose the field  $\mathbb{K}$ is finite. Let $\mathbb{L}$ be an infinite field such that $\mathbb{K} \subseteq \mathbb{L}$. Let $\bar{\mathcal{R}} =\mathcal{M}_d(\mathbb{L}) = \mathcal{R} \otimes_\mathbb{K} \mathbb{L}$. This is easy to see that a multilinear polynomial $f(\chi_1,\ldots,\chi_n)$ is central valued on $\mathcal{R}$ if and only if it is central valued on $\bar{\mathcal{R}}$.  Since $h(\mathcal{\zeta}_1,\ldots,\mathcal{\zeta}_n) $ is GPI on $\mathcal{R}$ and it is multi-homogeneous of multi degree $(2, \ldots , 2)$ in the indeterminates $\chi_1,\ldots,\chi_n$. Hence the complete linearization of $h(\mathcal{\zeta}_1,\ldots,\mathcal{\zeta}_n) $ is a multilinear generalized polynomial identity $\Phi(\mathcal{P}_1,\ldots, \mathcal{P}_n, \mathcal{\zeta}_1,\ldots,\mathcal{\zeta}_n)$ in $2n$ indeterminate. Also, $\Phi(\mathcal{\zeta}_1,\ldots,\mathcal{\zeta}_n, \mathcal{\zeta}_1,\ldots,\mathcal{\zeta}_n) = 2^nh(\mathcal{\zeta}_1,\ldots,\mathcal{\zeta}_n)$. Since $\Phi(\mathcal{P}_1,\ldots, \mathcal{P}_n,$\\ $\mathcal{\zeta}_1,\ldots,\mathcal{\zeta}_n)$ is generalized polynomial identity for both $\mathcal{R}$, $\bar{\mathcal{R}}$ and $Char(\mathcal{R})$ is different from $2$, we obtain $h(\mathcal{\zeta}_1,\ldots,\mathcal{\zeta}_n)=0 $ for all  $\zeta_1,\ldots,{\zeta}_n \in \bar{\mathcal{R}}$. Thus our assertion follows from Case $1$.
\end{proof}

\begin{lem} \label{lem3}
    Let $\mathcal{R}$ be a non-commutative prime ring with characteristics different from $2$. Let $\mathcal{U}$ be the Utumi ring of quotients and $\mathcal{C}$ be the extended centroid of the ring $\mathcal{R}$. Suppose $f(\mathcal{\zeta}_1,\ldots,\mathcal{\zeta}_n)$ be a non central multilinear polynomial over $\mathcal{C}$. Let $a_1,a_2, \ldots,a_6 \in \mathcal{U}$ such that $ a_1f(\zeta)^2  +a_2f(\zeta)a_3f(\zeta)+f(\zeta)a_5f(\zeta)a_6+f(\zeta)a_4f(\zeta)-a_5f(\zeta)^2a_6=0$ for all $\zeta =(\mathcal{\zeta}_1,\ldots,\mathcal{\zeta}_n)\in \mathcal{R}^{n}$. If $\mathcal{R}$ does not satisfy any nontrivial generalized
polynomial identity then one of the following holds:
	 \begin{enumerate}
    \item $a_2,a_5 \in \mathcal{C}$.
    \item $a_2, a_6 \in \mathcal{C}$.
    \item $a_3,a_5 \in \mathcal{C}$.
    \item $a_3, a_6 \in \mathcal{C}$.
    \end{enumerate}
\end{lem}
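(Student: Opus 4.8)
The plan is to exploit the hypothesis that $\mathcal{R}$ satisfies no nontrivial generalized polynomial identity, which forces the given relation to be a \emph{trivial} generalized polynomial. First I would invoke Fact \ref{fac2} to transfer the identity from $\mathcal{R}$ to $\mathcal{U}$, so that, writing $f=f(\zeta)$, the element
$$h(\zeta) = a_1 f^2 + a_2 f a_3 f + f a_5 f a_6 + f a_4 f - a_5 f^2 a_6$$
is a generalized polynomial identity for $\mathcal{U}$. Since $\mathcal{U}$ inherits the no-GPI hypothesis, $h$ must be the zero element of the free product $\mathcal{T} = \mathcal{U}\ast_{\mathcal{C}}\mathcal{C}\{\zeta\}$ of Fact \ref{fac6}. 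Writing $f=\sum_{\tau}\gamma_\tau\,\zeta_{\tau(1)}\cdots\zeta_{\tau(n)}$ with $\gamma_\tau\in\mathcal{C}$ and normalizing a highest-degree coefficient $\gamma_{\mathrm{id}}$ to $1$, I would expand $h$ as a $\mathcal{C}$-combination of $\mathcal{B}$-monomials and use Fact \ref{fac6}: each such coefficient vanishes. Note that, unlike in Lemma \ref{lem2}, no linearization is required, since triviality in $\mathcal{T}$ can be read off directly even though $h$ is multi-homogeneous of multidegree $(2,\dots,2)$. Finally, observe that the four stated alternatives are jointly equivalent to the conjunction ``(one of $a_5,a_6\in\mathcal{C}$) and (one of $a_2,a_3\in\mathcal{C}$)'', so it suffices to establish these two statements separately.

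For the first statement I would argue by contradiction, assuming $a_5\notin\mathcal{C}$ and $a_6\notin\mathcal{C}$, and choose a $\mathcal{C}$-basis $\mathcal{B}$ of $\mathcal{U}$ containing $1$ and $a_5$. The key observation is that in the $\mathcal{B}$-monomial expansion the summand $f a_5 f a_6$ contributes monomials of the shape $m_\sigma\, a_5\, m_\tau\, a_6$ (variable-blocks separated by $a_5$ and closed by $a_6$), and these are separated from the contributions of the other four summands by their outer basis letters: they begin with $1$ and end with $a_6$, whereas $-a_5 f^2 a_6$ begins with $a_5$ (which is $\neq 1$ since $a_5\notin\mathcal{C}$), while $a_1 f^2$, $a_2 f a_3 f$ and $f a_4 f$ all end with $1\neq a_6$ (as $a_6\notin\mathcal{C}$). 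Taking $\sigma=\tau=\mathrm{id}$ then produces a surviving $\mathcal{B}$-monomial with coefficient $\gamma_{\mathrm{id}}^{2}=1\neq 0$, contradicting the triviality of $h$. Hence one of $a_5,a_6$ lies in $\mathcal{C}$.

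Next I would reduce the identity. If $a_5\in\mathcal{C}$ then $f a_5 f a_6=a_5 f^2 a_6$, so the last two summands cancel; if instead $a_6=\lambda\in\mathcal{C}$, absorbing $\lambda$ and regrouping turns $h$ into the same shape. In either case $\mathcal{U}$ now satisfies $b_1 f^2 + a_2 f a_3 f + f b_4 f = 0$ for suitable $b_1,b_4\in\mathcal{U}$, with $a_2,a_3$ unchanged. Repeating the monomial-isolation argument, I would assume $a_2\notin\mathcal{C}$ and $a_3\notin\mathcal{C}$ and isolate the monomials of $a_2 f a_3 f$ having leading letter $a_2\neq 1$ and internal letter $a_3\neq 1$: these cannot be matched by $f b_4 f$, which leads with $1$, nor by $b_1 f^2$, which carries no internal non-unit letter even when $b_1=a_2$. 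Again a coefficient $\gamma_{\mathrm{id}}^{2}=1$ survives, a contradiction. Therefore one of $a_2,a_3\in\mathcal{C}$, and combining with the previous step yields the four stated cases.

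The main obstacle, and the only genuinely delicate point, is the bookkeeping of $\mathcal{B}$-monomials: one must verify that the isolated monomial truly fails to cancel against every other summand, and handle the degenerate possibility that $1,a_5,a_6$ (respectively $1,a_2,a_3$) are $\mathcal{C}$-dependent. In that case one rewrites the offending basis letter in terms of the chosen basis $\{1,a_5\}$ (respectively $\{1,a_2\}$) and checks that the resulting pieces remain separated by their outer letters. Once this leading/trailing-letter dichotomy is set up carefully, the noncentrality of the relevant coefficients guarantees the separation and the nonvanishing of the leading coefficient of $f$ guarantees a surviving term; everything else is routine expansion in $\mathcal{T}$.
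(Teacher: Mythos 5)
Your proposal is correct and follows essentially the same route as the paper: both use the no-GPI hypothesis to force $h$ to be the zero element of the free product $\mathcal{T}=\mathcal{U}\ast_{\mathcal{C}}\mathcal{C}\{\zeta\}$ (Fact \ref{fac6}), then separate monomials by their coefficient letters to conclude first that one of $a_5,a_6$ is central and, after the resulting cancellation, that one of $a_2,a_3$ is central. The only cosmetic difference is in the second stage, where you isolate on the leading and internal letters simultaneously, whereas the paper argues that $\{1,a_3,a_4\}$ must be $\mathcal{C}$-dependent, substitutes $a_4=\gamma_1+\gamma_2a_3$, and then separates; both are the same free-product bookkeeping.
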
	
\begin{proof}
First, we will prove that  both $a_5$ and $a_6$ are central. Suppose on the contrary that neither $a_5$ nor $a_6$ is central. From the hypothesis we have
\begin{align}\label{eq3a}
  h(\mathcal{\zeta}_1,\ldots,\mathcal{\zeta}_n) = a_1f(\mathcal{\zeta}_1,\ldots,\mathcal{\zeta}_n)^2 +a_2f(\mathcal{\zeta}_1,\ldots,\mathcal{\zeta}_n)a_3f(\mathcal{\zeta}_1,\ldots,\mathcal{\zeta}_n)  \nonumber\\ 
    -a_5f(\mathcal{\zeta}_1,\ldots,\mathcal{\zeta}_n)^2a_6 +f(\mathcal{\zeta}_1,\ldots,\mathcal{\zeta}_n)a_4f(\mathcal{\zeta}_1,\ldots,\mathcal{\zeta}_n) \nonumber\\
    +f(\mathcal{\zeta}_1,\ldots,\mathcal{\zeta}_n)a_5f(\mathcal{\zeta}_1,\ldots,\mathcal{\zeta}_n)a_6
\end{align}
for all $\mathcal{\zeta}_1,\ldots,\mathcal{\zeta}_n\in \mathcal{R}$. Let $\mathcal{T} = \mathcal{U}\star_\mathcal{C} \mathcal{C}\{\chi_1,\ldots,\chi_n\}$ be the free product of the Utumi quotient ring $\mathcal{U}$ and the free $\mathcal{C}$ algebra  $\mathcal{C}\{\chi_1,\ldots,\chi_n\}$ in non commuting variables $\chi_1,\ldots,\chi_n$. Since $\mathcal{R}$ and $\mathcal{U}$ satisfy the same generalized polynomial identity (GPI) (for details see \cite{C. L. Chuang. 1988}), thus  $\mathcal{U}$ satisfies $h(\chi_1, \ldots, \chi_n ) =0_\mathcal{T}$. Since $\{1,a_6\}$ is linearly $\mathcal{C}$-independent thus from Fact \ref{fac6}, we get
\begin{equation}
   -a_5f(\mathcal{\zeta}_1,\ldots,\mathcal{\zeta}_n)^2a_6+ f(\chi_1 , \ldots, \chi_n )a_5f(\chi_1 , \ldots, \chi_n )a_6=0
\end{equation}for all $\mathcal{\zeta}_1,\ldots,\mathcal{\zeta}_n\in \mathcal{R}$. Again since $\{1,a_5\}$ is linearly $\mathcal{C}$-independent hence from Fact \ref{fac6}, we get $f(\mathcal{\zeta}_1,\ldots,\mathcal{\zeta}_n)a_5f(\mathcal{\zeta}_1,\ldots,\mathcal{\zeta}_n)a_6=0$, a contradiction. Thus either $a_5\in \mathcal{C}$ or $a_6\in \mathcal{C}$. \\
\textbf{Case 1:} Assume $a_5\in \mathcal{C}$. Then since $a_6 \notin \mathcal{C}$ thus from Fact \ref{fac6}, we get 
\begin{equation} \label{eq3ba}
  a_1f\mathcal{\zeta}_1,\ldots,\mathcal{\zeta}_n)^2 +a_2f(\mathcal{\zeta}_1,\ldots,\mathcal{\zeta}_n)a_3f(\mathcal{\zeta}_1,\ldots,\mathcal{\zeta}_n) \end{equation}\begin{equation*}
  +f\mathcal{\zeta}_1,\ldots,\mathcal{\zeta}_n)a_4f\mathcal{\zeta}_1,\ldots,\mathcal{\zeta}_n) =0
\end{equation*}
 for all $\mathcal{\zeta}_1,\ldots,\mathcal{\zeta}_n\in \mathcal{R}$. Now suppose that $a_2,a_3$ are not central elements. Then $\{1,a_3,a_4\}$ is linearly $\mathcal{C}$-dependent otherwise $a_2f(\mathcal{\zeta}_1,\ldots,\mathcal{\zeta}_n)a_3f(\mathcal{\zeta}_1,\ldots,\mathcal{\zeta}_n)=0$ will appear as a non-trivial polynomial identity, a contradiction. Hence there exist $\beta_1,\beta_2, \beta_3 \in \mathcal{C}$ such that $\beta_1+\beta_2 a_3+\beta_3 a_4=0$. If $\beta_3 =0$ then $a_3= -\beta_1\beta_2^{-1}$, a contradiction. Hence $\beta_3 \neq 0$, which implies $a_4= \gamma_1+\gamma_2 a_3$, where $\gamma_1= -\beta_1\beta_3^{-1}$ and $\gamma_2= -\beta_2\beta_3^{-1}$. Thus Equation \ref{eq3ba} reduces to
 \begin{equation} 
  a_1f\mathcal{\zeta}_1,\ldots,\mathcal{\zeta}_n)^2 +(a_2+\gamma_1)f(\mathcal{\zeta}_1,\ldots,\mathcal{\zeta}_n)a_3f(\mathcal{\zeta}_1,\ldots,\mathcal{\zeta}_n) =0.
\end{equation} Since $\{1,a_3\}$ is linearly $\mathcal{C}$-independent thus from Fact 11, $\mathcal{R}$ satisfies the non-trivial polynomial identity $(a_2+\gamma_1)f(\mathcal{\zeta}_1,\ldots,\mathcal{\zeta}_n)a_3f(\mathcal{\zeta}_1,\ldots,\mathcal{\zeta}_n) =0$, a contradiction. Hence either $a_2$ or $a_3 \in \mathcal{C}$. Thus in this case we get the Conclusion (1) and (3).\\
\textbf{Case 2:} Assume $a_6 \in \mathcal{C}$. Again in this case by using similar arguments as above we get either $a_2$ or $a_3$ is central. Thus in this case we get the Conclusion (2) and (4).
\end{proof}
\begin{lem}\label{lem4}
    Let $\mathcal{R}$ be a non-commutative prime ring with characteristic different from $2$. Let $\mathcal{U}$ be the Utumi ring of quotients and $\mathcal{C}$ be the extended centroid of the ring $\mathcal{R}$. Suppose $f(\mathcal{\zeta}_1,\ldots,\mathcal{\zeta}_n)$ be a non central multilinear polynomial over $\mathcal{C}$. Let $a_1,a_2, \ldots,a_6 \in \mathcal{U}$ such that $ a_1f(\zeta)^2 +a_2f(\zeta)a_3f(\zeta)+f(\zeta)a_5f(\zeta)a_6+f(\zeta)a_4f(\zeta)-a_5f(\zeta)^2a_6=0$ for all $\zeta=(\mathcal{\zeta}_1,\ldots,\mathcal{\zeta}_n)\in R^{n}$. Then one of the following holds:
	 \begin{enumerate}
    \item $a_2,a_5 \in \mathcal{C}$.
    \item $a_2, a_6 \in \mathcal{C}$.
    \item $a_3,a_5 \in \mathcal{C}$.
    \item $a_3, a_6 \in \mathcal{C}$.
    \end{enumerate}
\end{lem}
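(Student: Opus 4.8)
The plan is to reduce \textbf{Lemma \ref{lem4}} to the two situations already settled, namely \textbf{Lemma \ref{lem2}} (the matrix ring $\mathcal{M}_d(\mathbb{K})$) and \textbf{Lemma \ref{lem3}} (the case where $\mathcal{R}$ satisfies no nontrivial GPI), by dichotomizing on whether or not $\mathcal{R}$ is a GPI ring. The four conclusions are identical across the three lemmas, so no new bookkeeping is needed once each alternative is placed in one of these two frameworks.

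First I would dispose of the easy alternative. If $\mathcal{R}$ satisfies no nontrivial generalized polynomial identity, then the hypothesis $a_1f(\zeta)^2 + a_2f(\zeta)a_3f(\zeta) + f(\zeta)a_5f(\zeta)a_6 + f(\zeta)a_4f(\zeta) - a_5f(\zeta)^2a_6 = 0$ is precisely the hypothesis of \textbf{Lemma \ref{lem3}}, and the four conclusions follow verbatim.

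The substance lies in the complementary case, where $\mathcal{R}$ satisfies a nontrivial GPI. Here I would invoke Martindale's structure theorem: since $\mathcal{R}$ is prime and GPI, its Utumi quotient ring $\mathcal{U}$ is a primitive ring with nonzero socle $H$, whose associated division ring $D$ is finite-dimensional over the center $\mathcal{C}$, and $\mathcal{U}$ acts as a dense ring of $D$-linear transformations of a vector space $V$ over $D$. By Fact \ref{fac2} the generalized identity $h(\zeta_1,\ldots,\zeta_n)=0$ persists on $\mathcal{U}$ and on $H$. The aim is to push this identity down to a matrix ring, apply \textbf{Lemma \ref{lem2}}, read off that the relevant coefficients are scalar matrices, and translate this back into the centrality statements $a_i \in \mathcal{C}$. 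Concretely, when $\dim_D V = t$ is finite we have $\mathcal{U} \cong \mathcal{M}_t(D)$, a finite-dimensional central simple $\mathcal{C}$-algebra; extending scalars to the algebraic closure $\overline{\mathcal{C}}$ splits $D$ and gives $\mathcal{U} \otimes_{\mathcal{C}} \overline{\mathcal{C}} \cong \mathcal{M}_d(\overline{\mathcal{C}})$ for a suitable $d$. Since scalar extension preserves the identity and keeps $f$ noncentral (exactly as in \textbf{Lemma \ref{lem2}}, Case $2$), \textbf{Lemma \ref{lem2}} applies over $\overline{\mathcal{C}}$, and the conclusion that each flagged $a_i \otimes 1$ is a scalar matrix forces $a_i \in \mathcal{C}$.

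The main obstacle I anticipate is the infinite-dimensional case $\dim_D V = \infty$ together with the possible noncommutativity of $D$. The intended remedy is to exploit the socle: for each $m$ there is an idempotent $e \in H$ with $e\mathcal{U}e \cong \mathcal{M}_m(D)$, and one restricts the evaluations of $f$ to the corner $e\mathcal{U}e$, where $f$ still attains a noncentral value, reducing matters to the finite-dimensional computation above. The delicate points to verify are that the restriction of $h$ to such a corner again has the same shape, now with truncated coefficients $ea_ie$, and that the conclusion ``$ea_ie$ is scalar for every sufficiently large $e$'' can be lifted back to ``$a_i \in \mathcal{C}$'' on all of $\mathcal{U}$; this is where the density of $\mathcal{U}$ in $\mathrm{End}_D(V)$ and the primeness of $\mathcal{R}$ are the essential inputs.
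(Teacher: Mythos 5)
Your proposal follows essentially the same route as the paper's proof: the non-GPI case is delegated to Lemma \ref{lem3}, and in the GPI case the paper likewise invokes Martindale's theorem plus Jacobson density, handles finite-dimensional $\mathcal{W}$ via Lemma \ref{lem2}, and settles the infinite-dimensional case by exactly the Litoff-type corner argument you sketch (truncated coefficients $ea_ie$, then lifting scalarity back via commutators with socle elements to reach a contradiction). The only cosmetic difference is ordering: the paper replaces $\mathcal{R}$ by $\mathcal{U}$ or $\mathcal{U}\otimes_{\mathcal{C}}\bar{\mathcal{C}}$ at the outset, so that the associated division ring is already $\mathcal{C}$, whereas you apply Martindale first and split $D$ by scalar extension afterwards.
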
	
\begin{proof}
    It is obvious from Lemma \ref{lem3} that the prime ring $\mathcal{R}$  satisfies the following non-trivial
generalized polynomial identity:
\begin{align}\label{eq4a}
    h(\mathcal{\zeta}_1,\ldots,\mathcal{\zeta}_n) = a_1f(\mathcal{\zeta}_1,\ldots,\mathcal{\zeta}_n)^2 +a_2f(\mathcal{\zeta}_1,\ldots,\mathcal{\zeta}_n)a_3f(\chi_1 , \ldots, \chi_n )  \nonumber\\ 
    -a_5f(\mathcal{\zeta}_1,\ldots,\mathcal{\zeta}_n)^2a_6 +f(\mathcal{\zeta}_1,\ldots,\mathcal{\zeta}_n)a_4f(\mathcal{\zeta}_1,\ldots,\mathcal{\zeta}_n)  \nonumber\\
    +f(\mathcal{\zeta}_1,\ldots,\mathcal{\zeta}_n)a_5f(\mathcal{\zeta}_1,\ldots,\mathcal{\zeta}_n)a_6
\end{align}
for all $\mathcal{\zeta}_1,\ldots,\mathcal{\zeta}_n\in \mathcal{R}$. From Fact \ref{fac2} it is clear that the ring $\mathcal{R}$ and $\mathcal{U}$ satisfy the same generalized polynomial identity. If $\mathcal{C}$ is infinite then $h(\mathcal{\zeta}_1,\ldots,\mathcal{\zeta}_n) = 0$ for all $\mathcal{\zeta}_1,\ldots,\mathcal{\zeta}_n\in \mathcal{U} \otimes_\mathcal{C} \bar{\mathcal{C}}$. Since $\mathcal{U}$ and $\mathcal{U} \otimes_\mathcal{C} \bar{\mathcal{C}}$  are prime and centrally closed \cite{erickson1975pr}, we can replace $\mathcal{R}$ with $\mathcal{U}$ or $\mathcal{U} \otimes_\mathcal{C} \bar{\mathcal{C}}$ according as $\mathcal{C}$ is finite or infinite. Then from Martindale’s theorem \cite{martindale1969pr}, ring  $\mathcal{R}$ is a primitive ring with non-zero socle $soc(\mathcal{R})$ and with $\mathcal{C}$ as its associated division ring. Now by Jacobson
density theorem \cite{N. Jacobson1956}, $\mathcal{R}$ is isomorphic to a dense ring of a linear transformation of
a vector space $\mathcal{W}$ over the division ring $\mathcal{C}$. Now following two cases arises:

\textbf{Case 1:} If the dimension of the vector space $\mathcal{W}$ is finite then from density of $\mathcal{R}$ we get ${\mathcal{R}} \cong \mathcal{M}_l(\mathcal{C}).$  Since $f(\zeta)$ is non central on $\mathcal{R}$ therefore $l$ must be greater than 1. Thus, we get the desired result by Lemma \ref{lem2}.

 \textbf{Case 2:} If the dimension of the vector space $\mathcal{W}$ is infinite. Suppose none $a_2,a_3,a_5$ and $a_6$ is central. Then by Martindale’s theorem \cite{martindale1969pr}, there exists a non-zero idempotent element $a\in \mathcal{R}$ such that $a\mathcal{R}a\cong \mathcal{M}_l(\mathcal{C}),$ where $l$ is the dimension of the vector space $\mathcal{W}a$ of  over the field  $\mathcal{C}$. Since $a_2,a_3,a_5, a_6 \notin \mathcal{C}$, there exist $b_2,b_3,b_5,b_6 \in soc(\mathcal{R})$ such that $[a_2,b_2]\neq 0,\ [a_3,b_3]\neq 0, \ [a_5,b_5]\neq 0$ and $[a_6,b_6]\neq 0$. Now by Litoff's theorem \cite{c}, there exists an idempotent $a \in soc(\mathcal{R})$ s.t. $a_2b_2,b_2a_2,a_3b_3,b_3a_3,a_5b_5,b_5a_5,a_6b_6,b_6a_6\in a\mathcal{R}a$. As we have
\begin{align*}
  a\Big( a_1f(a\zeta_1a,\ldots ,a\zeta_na)^2 +a_2f(a\zeta_1a,\ldots ,a\zeta_na)a_3f(a\zeta_1a,\ldots ,a\zeta_na)\\
   +f(a\zeta_1a,\ldots ,a\zeta_na)a_5f(a\zeta_1a,\ldots ,a\zeta_na)a_6  -a_5f(a\zeta_1a,\ldots ,a\zeta_na)^2a_6\\
   +f(a\zeta_1a,\ldots ,a\zeta_na)a_4f(a\zeta_1a,\ldots ,a\zeta_na)\Big)a  =0, \forall \ \zeta_1,\ldots ,\zeta_n\in \mathcal{R}.
\end{align*}
Thus the subring $a\mathcal{R}a$ satisfies
\begin{align*}
   aa_1af(\zeta_1,\ldots ,\zeta_n)^2 +aa_2af(\zeta_1,\ldots ,\zeta_n)aa_3af(\zeta_1,\ldots ,\zeta_n)\\
   +f(\zeta_1,\ldots ,\zeta_n)aa_5af(\zeta_1,\ldots ,\zeta_n)aa_6a-aa_5af(\zeta_1,\ldots ,\zeta_n)^2aa_6a\\
   +f(\zeta_1,\ldots ,\zeta_n)aa_4af(\zeta_1,\ldots ,\zeta_n)   =0, \forall \ \zeta_1,\ldots ,\zeta_n\in \mathcal{R}.
\end{align*}
Now by Lemma \ref{lem2} one of the following pair $aa_2a,aa_5a$ or $aa_2a,aa_6a$ or $aa_3a, aa_5a$ or $aa_3a,aa_6a$ is a central element of $a\mathcal{R}a$,  but this shows that both term of any one of the following pairs $[a_2,b_2],[a_5,b_5]$, or $[a_2,b_2],[a_6,b_6]$ or $[a_3,b_3],[a_5,b_5]$ $[a_3,b_3],[a_6,b_6]$ equals to zero, which leads to a contradiction.  
\end{proof}	
\textbf{Proof of Proposition \ref{prop1}:} From the hypothesis there exist $a,b,c,d,b^{\prime},\tilde{b}\in \mathcal{U}$ such that $\mathcal{F}(\chi)= a\chi+\tilde{b}\alpha(\chi)b$ and $\mathcal{G}(\chi)=c\chi+b^{\prime}\alpha(\chi)d$, for all $\chi \in \mathcal{R}$. Then from Fact \ref{fac2} and Fact \ref{fac3},  $\mathcal{R}$ satisfies
	
	\begin{align}\label{eq p1}
    af(\chi)^2 +\tilde{b}\alpha(f(\chi))bf(\chi)+f(\chi)cf(\chi)+f(\chi)b^{\prime}\alpha(f(\chi))d\nonumber \\ -cf(\chi)^2-b^{\prime}\alpha(f(\chi)^2)d=0.
    \end{align}
	for all $\chi=(\chi_1,\ldots,\chi_n)\in \mathcal{R}^n$. If $\alpha$ is inner automorphism, there exists $t\in \mathcal{U}$ such that $\alpha(\chi) = t\chi t^{-1}$ for all $\chi \in \mathcal{R}$, then Equation (\ref{eq p1}) reduces to:
	\begin{align}\label{eq p2}
    (a-c)f(\chi)^2 +\tilde{b}tf(\chi)t^{-1}bf(\chi)+f(\chi)cf(\chi)+f(\chi)b^{\prime}tf(\chi)t^{-1}d \nonumber \\
    -b^{\prime}tf(\chi)^2t^{-1}d=0.
    \end{align}
From Lemma \ref{lem4}, one of the following holds:
	\begin{enumerate}
	    \item $b^{\prime}t,\Tilde{b}t\in \mathcal{C}$; 
	    \item $\tilde{b}t,t^{-1}d\in \mathcal{C}$;
	    \item $t^{-1}b,b^{\prime}t\in \mathcal{C}$;
	    \item  $t^{-1}b,t^{-1}d\in \mathcal{C}.$
	\end{enumerate}
\textbf{Case 1:}	Let $b^{\prime}t,\tilde{b}t\in \mathcal{C}$. Then Equation (\ref{eq p2}) reduces to 
	\begin{align*}
    (a-c)f(\chi)^2 +f(\chi)(\tilde{b}b+c)f(\chi)=0.
    \end{align*}
for all $\chi=(\chi_1,\ldots,\chi_n)\in \mathcal{R}^n$. By using Fact \ref{fac 8}, we get
 $c-a,\ \tilde{b} b+c \in \mathcal{C}, \ \tilde{b} b+a=0$. Thus in this case we have $\mathcal{F}(\chi)= a\chi +\chi \tilde{b} b$ and $\mathcal{G}(\chi)= c\chi +\chi b^{\prime} d$,  which is the Conclusion $(1)$.\\
 \textbf{Case 2:} Let $\tilde{b}t,t^{-1}d\in \mathcal{C}$. Then Equation (\ref{eq p2}) reduces to 
	\begin{align*}
    (a-c-b^{\prime}d)f(\chi)^2 +f(\chi)(\tilde{b}b+c+b^{\prime}d)f(\chi)=0.
    \end{align*}
for all $\chi=(\chi_1,\ldots,\chi_n)\in \mathcal{R}^n$, again by Fact \ref{fac 8}, we get
 $(c+b^{\prime}d-a),\ (\tilde{b}b+c+b^{\prime}d)\in \mathcal{C}$ and $\tilde{b}b+a=0$. Thus in this case we have $\mathcal{F}(\chi)= a\chi+\chi\tilde{b}b$ and $\mathcal{G}(\chi)=c\chi+b^{\prime}d\chi$ which is the Conclusion $(2)$.\\
\textbf{Case 3:} Let  $t^{-1}b,b^{\prime}t\in \mathcal{C}$. Then Equation (\ref{eq p2}) reduces to 
	\begin{align*}
    (a-c+\tilde{b}b)f(\chi)^2 +f(\chi)cf(\chi)=0.
    \end{align*}
for all $\chi=(\chi_1,\ldots,\chi_n)\in \mathcal{R}^n$, again by Fact \ref{fac 8}, we get
$c\in \mathcal{C}$ and $\tilde{b} b+a=0$, and the functions $\mathcal{F}$ and $\mathcal{G}$ are given by $\mathcal{F}(\chi)= a \chi + \tilde{b} b\chi=(a+\tilde{b} b)\chi=0 $ and $\mathcal{G}(\chi)=c \chi +\chi b^{\prime} d$ which is the Conclusion $(3)$.
    
\textbf{Case 4:} Let  $t^{-1}b,t^{-1}d\in C.$ Then Equation (\ref{eq p2}) reduces to 
	\begin{align*}
    (a-c+\tilde{b}b-b^{\prime}d)f(\chi)^2 +f(\chi)(c+b^{\prime}d)f(\chi)=0.
    \end{align*}
for all $\chi=(\chi_1,\ldots,\chi_n)\in \mathcal{R}^n$, again by Fact \ref{fac 8}, we get $(a-c+\tilde{b}b-b^{\prime}d),(c+b^{\prime}d)\in \mathcal{C}$ and $\tilde{b}b+a=0$, and the functions $\mathcal{F}$ and $\mathcal{G}$ are given by $\mathcal{F}(\chi)= a\chi+\tilde{b}b\chi=(a+\tilde{b}b)\chi=0$ and $\mathcal{G}(\chi)=c\chi+b^{\prime}d\chi=(c+b^{\prime}d)\chi$ which is the Conclusion $(4)$.
    
Next, assume the case when the automorphism $\alpha$ is outer. Then the prime ring  $\mathcal{R}$ satisfies the following generalized polynomial identity \cite{C. L. Chuang. 1988}
\begin{align}\label{eq p3}
    af(\chi)^2 +\tilde{b}f^{\alpha}(\mathcal{Y}_1,\ldots, \mathcal{Y}_n)bf(\chi)+f(\chi)cf(\chi)    +f(\chi)b^{\prime}f^{\alpha}(\mathcal{Y}_1,\ldots, \mathcal{Y}_n)d\nonumber\\
    -cf(\chi)^2    -b^{\prime}f^{\alpha}(\mathcal{Y}_1,\ldots, \mathcal{Y}_n)^2d=0.
    \end{align}
   for all $\chi=(\chi_1,\ldots,\chi_n)\in \mathcal{R}^n$ and $\mathcal{Y}_1,\ldots, \mathcal{Y}_n\in \mathcal{R}$. In particular $\mathcal{R}$ satisfies, 
   \begin{align}\label{eq p4}
       \tilde{b}f^{\alpha}(\mathcal{Y}_1,\ldots, \mathcal{Y}_n)bf(\chi)+f(\chi)b^{\prime}f^{\alpha}(\mathcal{Y}_1,\ldots, \mathcal{Y}_n)d=0
   \end{align}
   and 
 \begin{align}\label{eq p5}
     b^{\prime}f^{\alpha}(\mathcal{Y}_1,\ldots, \mathcal{Y}_n)^2d=0
 \end{align}
 for all $\chi=(\chi_1,\ldots,\chi_n)\in \mathcal{R}^n$ and $\mathcal{Y}_1,\ldots, \mathcal{Y}_n\in \mathcal{R}$. By application of Lemma, $3$ of \cite{V. D. Filippis 2021} in Equation (\ref{eq p4}), we have  $ \tilde{b}f^{\alpha}(\mathcal{Y}_1,\ldots, \mathcal{Y}_n)b=-b^{\prime}f^{\alpha}(\mathcal{Y}_1,\ldots, \mathcal{Y}_n)d\in \mathcal{C}$. Then by application of Note $16$ of \cite{V. D. Filippis 2021}, we get that either $\tilde{b}=0$ or $b=0$ and either $b^\prime =0$ or $d=0$. In either case Equation (\ref{eq p1}) reduces to
 \begin{align}
     (a-c)f(\chi)^2+f(\chi)cf(\chi)=0
 \end{align}
for all $\chi=(\chi_1,\ldots,\chi_n)\in \mathcal{R}^n$. Therefore by Fact \ref{fac 8}, we get  $a=0$, $c \in \mathcal{C}$, which is the Conclusion $(4)$.
 \section*{Proof of the main Theorem \ref{thm}}
 From \cite{de2018}, there exist $a, c, b^{\prime}, \tilde{b}\in \mathcal{U}$ such that $\mathcal{F}(\chi) = a\chi +\tilde{b}g(\chi)$ and $\mathcal{G}(\chi) = c\chi + b^{\prime}h (\chi)$ for all $\chi \in \mathcal{R}$, where $g$ and $h$ are skew derivations associated with automorphism $\alpha$. If both $g$ and $h$ are skew inner derivations then we have the Conclusion from Proposition \ref{prop1}. Now assume that at least one of $g$ or $h$ is outer. Then from the hypothesis $\mathcal{R}$ satisfies
  \begin{align}\label{m1}
    (a-c)f(\zeta)^2 +\tilde{b}g(f(\zeta))f(\zeta)+f(\zeta)cf(\zeta)+f(\zeta)b^{\prime} h(f(\zeta))-b^{\prime}h(f(\zeta)^2)    =0.
    \end{align}
    for all $\mathcal{\zeta}=(\mathcal{\zeta}_1,\ldots,\mathcal{\zeta}_n)\in \mathcal{R}^n$.
 \begin{center}
     \textbf{$g$ and $h$ are $\mathcal{C}$-linearly independent modulo SDin}
 \end{center}
 In this case after applying the definition of $g$ and $h$, $\mathcal{R}$ satisfies the following:
  	\begin{align}
    (a-c)f(\chi_1,\ldots,\chi_n)^2 +\tilde{b}f^g(\chi_1,\ldots, \chi_n)f(\chi_1,\ldots,\chi_n)\nonumber \\
    +f(\chi_1,\ldots,\chi_n)cf(\chi_1,\ldots,\chi_n)+f(\chi_1,\ldots,\chi_n)b^{\prime} f^h(\chi_1,\ldots,\chi_n))\nonumber \\
    -b^{\prime}\alpha(f(\chi_1,\ldots,\chi_n))f^h(\chi_1,\ldots,\chi_n) -b^{\prime}f^h(\chi_1,\ldots,\chi_n)f(\chi_1,\ldots,\chi_n) \nonumber \\
    +\bigg[\tilde{b}\sum\limits_{\sigma\in \mathcal{S}_n}\alpha(\gamma_{\sigma})\sum\limits_{j=0}^{n-1}\alpha(\chi_{\sigma(1)}\chi_{\sigma(2)}\ldots, \chi_{\sigma(j)})g(\chi_{\sigma(j+1)})\chi_{\sigma(j+2)},\ldots,\chi_{\sigma(n)} \bigg]\nonumber \\ f(\chi_1,\ldots,\chi_n)+f(\chi_1,\ldots,\chi_n)\nonumber \\
    \bigg[b^{\prime}\sum\limits_{\sigma\in \mathcal{S}_n}\alpha(\gamma_{\sigma})\sum\limits_{j=0}^{n-1}\alpha(\chi_{\sigma(1)}\chi_{\sigma(2)}\ldots, \chi_{\sigma(j)})h(\chi{\sigma(j+1)})\chi_{\sigma(j+2)},\ldots,\chi_{\sigma(n)}\bigg]\nonumber \\
    -\bigg[b^{\prime}\sum\limits_{\sigma\in \mathcal{S}_n}\alpha(\gamma_{\sigma})\sum\limits_{j=0}^{n-1}\alpha(\chi_{\sigma(1)}\chi_{\sigma(2)}\ldots, \chi_{\sigma(j)})h(\chi{\sigma(j+1)})\chi_{\sigma(j+2)},\ldots,\chi_{\sigma(n)}\bigg]\nonumber\\ 
     f(\chi_1,\ldots,\chi_n)-b^{\prime}\alpha(f(\chi_1,\ldots,\chi_n))\nonumber\\ 
     \bigg[\sum\limits_{\sigma\in \mathcal{S}_n}\alpha(\gamma_{\sigma})\sum\limits_{j=0}^{n-1}\alpha(\chi_{\sigma(1)} \chi_{\sigma(2)}     \ldots, \chi_{\sigma(j)}) h(\chi_{\sigma(j+1)})\chi_{\sigma(j+2)},\ldots,\chi_{\sigma(n)}\bigg]
     =0
    \end{align}
for all $\chi_1,\ldots,\chi_n\in \mathcal{R}$. From \cite{C. L. Chuang. 1988}, $\mathcal{R}$ satisfies
 	\begin{align}\label{m2}
    (a-c)f(\chi_1,\ldots,\chi_n)^2 +\tilde{b}f^g(\chi_1,\ldots, \chi_n)f(\chi_1,\ldots,\chi_n)\nonumber \\
    +f(\chi_1,\ldots,\chi_n)cf(\chi_1,\ldots,\chi_n)+f(\chi_1,\ldots,\chi_n)b^{\prime} f^h(\chi_1,\ldots,\chi_n))\nonumber \\
    -b^{\prime}\alpha(f(\chi_1,\ldots,\chi_n))f^h(\chi_1,\ldots,\chi_n)  -b^{\prime}f^h(\chi_1,\ldots,\chi_n)f(\chi_1,\ldots,\chi_n)\nonumber \\
    +\bigg[\tilde{b}\sum\limits_{\sigma\in \mathcal{S}_n}\alpha(\gamma_{\sigma})\sum\limits_{j=0}^{n-1}\alpha(\chi_{\sigma(1)}\chi_{\sigma(2)}\ldots, \chi_{\sigma(j)})\mathcal{U}_{\sigma(j+1)}\chi_{\sigma(j+2)},\ldots,\chi_{\sigma(n)}\bigg]\nonumber \\ f(\chi_1,\ldots,\chi_n)+f(\chi_1,\ldots,\chi_n)\nonumber \\
    \bigg[b^{\prime}\sum\limits_{\sigma\in \mathcal{S}_n}\alpha(\gamma_{\sigma})\sum\limits_{j=0}^{n-1}\alpha(\chi_{\sigma(1)}\chi_{\sigma(2)}\ldots, \chi_{\sigma(j)}) \mathcal{V}_{\sigma(j+1)}\chi_{\sigma(j+2)},\ldots,\chi_{\sigma(n)}\bigg]\nonumber \\
    -\bigg[b^{\prime}\sum\limits_{\sigma\in \mathcal{S}_n}\alpha(\gamma_{\sigma})\sum\limits_{j=0}^{n-1}\alpha(\chi_{\sigma(1)}\chi_{\sigma(2)}\ldots, \chi_{\sigma(j)})\mathcal{V}_{\sigma(j+1)}\chi_{\sigma(j+2)},\ldots,\chi_{\sigma(n)}\bigg]\nonumber\\
     f(\chi_1,\ldots,\chi_n)-b^{\prime}\alpha(f(\chi_1,\ldots,\chi_n))\nonumber \\ \bigg[\sum\limits_{\sigma\in \mathcal{S}_n}\alpha(\gamma_{\sigma})\sum\limits_{j=0}^{n-1}\alpha(\chi_{\sigma(1)}\chi_{\sigma(2)}\ldots, \chi_{\sigma(j)}) \mathcal{V}_{\sigma(j+1)}\chi_{\sigma(j+2)},\ldots,\chi_{\sigma(n)}\bigg]\nonumber \\
    =0
    \end{align}
 for all $\chi_1,\ldots,\chi_n,\mathcal{U}_1,\ldots, \mathcal{U}_n,\mathcal{V}_1,\ldots,\mathcal{V}_n\in \mathcal{R}$. In particular, $\mathcal{R}$ satisfies:
 \begin{align}\label{m3}
   \tilde{b} \bigg[\sum\limits_{\sigma\in \mathcal{S}_n}\alpha(\gamma_{\sigma})\sum\limits_{j=0}^{n-1}\alpha(\chi_{\sigma(1)}\chi_{\sigma(2)}\ldots, \chi_{\sigma(j)})\mathcal{U}_{\sigma(j+1)}\chi_{\sigma(j+2)},\ldots,\chi_{\sigma(n)}\bigg]\nonumber\\
    f(\chi_1,\ldots,\chi_n)     =0
 \end{align}
for all $\chi_1,\ldots,\chi_n,\mathcal{U}_1,\ldots, \mathcal{U}_n\in \mathcal{R}$. If the automorphism $\alpha$ is not inner then from \cite{C. L. Chuang. 1988}, the ring $\mathcal{R}$ satisfies
 \begin{align}\label{m4}
   \tilde{b} \bigg[\sum\limits_{\sigma\in \mathcal{S}_n}\alpha(\gamma_{\sigma})\sum\limits_{j=0}^{n-1}\mathcal{Y}_{\sigma(1)}\mathcal{Y}_{\sigma(2)}\ldots, \mathcal{Y}_{\sigma(j)},\mathcal{U}_{\sigma(j+1)}\chi_{\sigma(j+2)},\ldots,\chi_{\sigma(n)}\bigg] f(\chi_1,\ldots,\chi_n)  \nonumber\\
    =0
 \end{align}
 for all $\chi_1,\ldots,\chi_n,\mathcal{Y}_1,\ldots,\mathcal{Y}_n,\mathcal{U}_1,\ldots, \mathcal{U}_n\in \mathcal{R}$. In particular, choosing $\mathcal{U}_1=\cdots =\mathcal{U}_{n-1}=0$, we have 
 \begin{align}\label{m5}
    \tilde{b}f^{\alpha}(\mathcal{Y}_1,\ldots,\mathcal{Y}_{n-1},\mathcal{U}_n) f(\chi_1,\ldots,\chi_n)  =0
 \end{align}
 for all $\chi_1,\ldots,\chi_n, \mathcal{Y}_1,\ldots, \mathcal{Y}_{n-1},\mathcal{U}_n\in \mathcal{R}$. Since $f(\chi_1,\ldots,\chi_n)$ is non central polynomial therefore $\tilde{b}=0.$
 
Also, from Equation (\ref{m2}), we have
\begin{align*}
    f(\chi_1,\ldots,\chi_n) \bigg[b^{\prime}\sum\limits_{\sigma\in \mathcal{S}_n}\alpha(\gamma_{\sigma})\sum\limits_{j=0}^{n-1}\mathcal{Y}_{\sigma(1)}\mathcal{Y}_{\sigma(2)}\ldots, \mathcal{Y}_{\sigma(j)} \mathcal{V}_{\sigma(j+1)}\chi_{\sigma(j+2)},\ldots,\chi_{\sigma(n)}\bigg]\nonumber \\
     -b^{\prime}f^{\alpha}(\mathcal{Y}_1,\ldots,\mathcal{Y}_n)\bigg[\sum\limits_{\sigma\in \mathcal{S}_n}\alpha(\gamma_{\sigma})\sum\limits_{j=0}^{n-1}\mathcal{Y}_{\sigma(1)}\mathcal{Y}_{\sigma(2)}\ldots,\mathcal{Y}_{\sigma(j)}  \mathcal{V}_{\sigma(j+1)}\chi_{\sigma(j+2)},\ldots,\chi_{\sigma(n)}\bigg]  \nonumber\\ 
    -\bigg[b^{\prime}\sum\limits_{\sigma\in\mathcal{S}_n}\alpha(\gamma_{\sigma})\sum\limits_{j=0}^{n-1}\mathcal{Y}_{\sigma(1)}\mathcal{Y}_{\sigma(2)}\ldots, \mathcal{Y}_{\sigma(j)}\mathcal{V}_{\sigma(j+1)}\chi_{\sigma(j+2)},\ldots,\chi_{\sigma(n)}\bigg]f(\chi_1,\ldots,\chi_n) =0
\end{align*}
for all  $\chi_1,\ldots,\chi_n,\mathcal{Y}_1,\ldots, \mathcal{Y}_{n},\mathcal{V}_1,\ldots,\mathcal{V}_n\in \mathcal{R}$. In particular, choosing $\mathcal{V}_1=\cdots =\mathcal{V}_{n-1}=\mathcal{Y}_n=0$, we have 
\begin{align} \label{ed1}
    f(\chi_1,\ldots,\chi_n) b^{\prime}f(\mathcal{Y}_1,\ldots,\mathcal{Y}_{n-1},\mathcal{V}_n)- b^{\prime}f(\mathcal{Y}_1,\ldots,\mathcal{Y}_{n-1},\mathcal{V}_n)f(\chi_1,\ldots,\chi_n)=0
\end{align}
for all $\chi_1,\dots, \chi_n,\mathcal{Y}_1,\ldots,\mathcal{Y}_{n-1},\mathcal{V}_n\in \mathcal{R}$.  By application of Lemma, $3$ of \cite{V. D. Filippis 2021} in Equation (\ref{ed1}), we have  $ b^{\prime}f(\mathcal{Y}_1,\ldots,\mathcal{Y}_{n-1},\mathcal{V}_n)\in \mathcal{C}$. Then by application of Note $16$ of \cite{V. D. Filippis 2021}, we get that  $b^{\prime}=0$, and earlier we have shown that $\tilde{b}$=0, this leads to a contradiction.

Now if the automorphism $\alpha$ is inner, then there exists $t\in \mathcal{R}$ such that $\alpha(\chi) = t\chi t^{-1}$ for all $\chi \in \mathcal{R}$. Now from Equation (\ref{m3}), $\mathcal{R}$ satisfies the following equation
 \begin{multline}\label{m6}
    \bigg[\tilde{b}t\sum\limits_{\sigma\in \mathcal{S}_n}\gamma_{\sigma}\sum\limits_{j=0}^{n-1}\chi_{\sigma(1)}\chi_{\sigma(2)}\ldots, \chi_{\sigma(j)}t^{-1}\mathcal{U}_{\sigma(j+1)}\chi_{\sigma(j+2)},\ldots,\chi_{\sigma(n)}\bigg] f(\chi_1,\ldots,\chi_n)  \\
    =0
 \end{multline}
 for all  $\chi_1,\ldots,\chi_n,\mathcal{U}_1 \ldots, \mathcal{U}_{n}\in \mathcal{R}$ . In particular, choosing $\mathcal{U}_1 =\ldots, =\mathcal{U}_{n-1} = 0$ and $\mathcal{U}_n =t\chi_n$, we have that
 \begin{align}\label{m7}
    \tilde{b}t f(\chi_1,\ldots,\chi_n)^2=0
 \end{align}
 for all $\chi_1,\ldots,\chi_n\in \mathcal{R}$, and this gives that $\tilde{b}=0$.
Also from the Equation (\ref{m2}) we have
\begin{multline*}
    f(\chi_1,\ldots,\chi_n) \bigg[b^{\prime}t\sum\limits_{\sigma\in \mathcal{S}_n}\gamma_{\sigma}\sum\limits_{j=0}^{n-1}{\chi_{\sigma(1)}\chi_{\sigma(2)}\ldots, \chi_{\sigma(j)} t^{-1}\mathcal{V}_{\sigma(j+1)}\chi_{\sigma(j+2)},\ldots,\chi_{\sigma(n)}}\bigg]\nonumber \\
     -b^{\prime}tf(\chi_1,\ldots,\chi_n))\bigg[\sum\limits_{\sigma\in \mathcal{S}_n}\gamma_{\sigma}\sum\limits_{j=0}^{n-1}{\chi_{\sigma(1)}\chi_{\sigma(2)}\ldots, \chi_{\sigma(j)} t^{-1}\mathcal{V}_{\sigma(j+1)}\chi_{\sigma(j+2)},\ldots,\chi_{\sigma(n)}}\bigg]\nonumber \\
      -b^{\prime}t\bigg[\sum\limits_{\sigma\in \mathcal{S}_n}\gamma_{\sigma}\sum\limits_{j=0}^{n-1}{\chi_{\sigma(1)}\chi_{\sigma(2)}\ldots, \chi_{\sigma(j)} t^{-1}\mathcal{V}_{\sigma(j+1)}\chi_{\sigma(j+2)},\ldots,\chi_{\sigma(n)}}\bigg]f(\chi_1,\ldots,\chi_n)=0
\end{multline*}
 for all $\chi_1,\ldots,\chi_n,\mathcal{V}_1,\ldots,\mathcal{V}_n\in \mathcal{R}$. In particular, choosing $\mathcal{V}_1 \ldots,  \mathcal{V}_{n-1} = 0$ and $\mathcal{V}_n = t\chi_n$, we have that
\begin{align}\label{ed2}
  f(\chi_1,\ldots,\chi_n)b^{\prime}tf(\chi_1,\ldots,\chi_n)-2  b^{\prime}t f(\chi_1,\ldots,\chi_{n})^2=0
 \end{align}
 for all $\chi_1,\ldots, \chi_{n}\in \mathcal{R}$. By Fact \ref{fac 8}, we have $b^{\prime}t=0$ and since $t$ is invertible, $b^{\prime}=0$, which  again leads to a contradiction.
  \begin{center}
     \textbf{$g$ and $h$ are $\mathcal{C}$-linearly dependent modulo SDin}
 \end{center}
 Let the derivations $g$ and $h$ are $\mathcal{C}$-linearly dependent. Then there exist $p,q\in \mathcal{C}$ and $\Gamma\in Aut(\mathcal{R})$ such that $pg(\chi) +qh(\chi) = w\chi- \Gamma(\chi)w$ for all $\chi\in \mathcal{R}$ and some non-zero $w\in \mathcal{R}$. We divide the proof into the following cases:
 
\textbf{Case 1:} Suppose $p=0$ and $q\neq 0$. In this case $h(\chi) =q^{-1} w\chi -q^{-1} \Gamma(\chi)w$ by Fact (\ref{fac 9}), $\Gamma=\alpha$. For $\lambda=q^{-1}w$, we can write $h(\chi) =\lambda \chi - \alpha(\chi)\lambda$ and so $\mathcal{F}(\chi)=a\chi+\tilde{b}g(\chi)$ and $\mathcal{G}(\chi)=(c+b^{\prime}\lambda)\chi-b^{\prime}\alpha(\chi)\lambda$. Then from Equation (\ref{m1}), we
get
	\begin{align}\label{m8}
    (a-c-b^{\prime}\lambda)f(\chi_1,\ldots,\chi_n)^2 +\tilde{b}f^g(\chi_1,\ldots, \chi_n)f(\chi_1,\ldots,\chi_n)\nonumber \\
    +f(\chi_1,\ldots,\chi_n)(c+b^{\prime}\lambda)f(\chi_1,\ldots,\chi_n)-f(\chi_1,\ldots,\chi_n)b^{\prime} \alpha(f(\chi_1,\ldots,\chi_n))\lambda\nonumber \\
    -b^{\prime}\alpha(f(\chi_1,\ldots,\chi_n)^2)\lambda
    +\tilde{b}\bigg[\sum\limits_{\sigma\in\mathcal{S}_n}\alpha(\gamma_{\sigma})\sum\limits_{j=0}^{n-1}\alpha(\chi_{\sigma(1)}\chi_{\sigma(2)}\ldots,\chi_{\sigma(j)})\nonumber \\ \mathcal{U}_{\sigma(j+1)}\chi_{\sigma(j+2)},\ldots,\chi_{\sigma(n)}\bigg] f(\chi_1,\ldots,\chi_n)=0
    \end{align}
 for all $\chi_1,\ldots,\chi_n, \mathcal{U}_1,\ldots, \mathcal{U}_n\in \mathcal{R}$. In particular, we have
 	\begin{align}\label{m9}
    \tilde{b}\bigg[\sum\limits_{\sigma\in \mathcal{S}_n}\alpha(\gamma_{\sigma})\sum\limits_{j=0}^{n-1}\alpha(\chi_{\sigma(1)}\chi_{\sigma(2)}\ldots, \chi_{\sigma(j)})\mathcal{U}_{\sigma(j+1)}\chi_{\sigma(j+2)},\ldots,\chi_{\sigma(n)}\bigg] f(\chi_1,\ldots,\chi_n)\nonumber\\
    =0
    \end{align}
  for all $\chi_1,\ldots,\chi_n, \mathcal{U}_1,\ldots, \mathcal{U}_n\in \mathcal{R}$. Equation (\ref{m9}) is similar to Equation (\ref{m3}) thus by parallel argument used previously, we get that $\tilde{b}=0$, but this makes both $\mathcal{F}$ and $\mathcal{G}$ generalized  inner skew derivations which contradict to our assumption.
 
\textbf{Case (2):} Suppose $p\neq 0$ and $q= 0$. In this case $g(\chi) = p^{-1}w\chi-p^{-1}\Gamma(\chi)w$ and by Fact (\ref{fac 9}), $\Gamma=\alpha$. Suppose $p^{-1}w=\lambda^{\prime}$ then $g(\chi) = \lambda^{\prime}\chi-\alpha(\mathcal{X})\lambda^{\prime}$. We write $\mathcal{F}(\chi)=a\chi+\tilde{b}(\lambda^{\prime}\chi-\alpha(\chi)\lambda^{\prime})$. Then Equation (\ref{m1}) reduces to
	\begin{multline}\label{m12}
    (a-c+\tilde{b}\lambda^{\prime})f(\zeta)^2 +f(\zeta)cf(\zeta)
    +f(\zeta)b^{\prime} h(f(\zeta)) -b^{\prime}h(f(\zeta)^2)-\tilde{b}\alpha(f(\zeta))\lambda^{\prime}f(\zeta)=0.
    \end{multline}
 Which further can be written as
 	\begin{align}\label{m13}
    (a-c+\tilde{b}\lambda^{\prime})f(\chi_1,\ldots,\chi_n)^2 +f(\chi_1,\ldots,\chi_n)cf(\chi_1,\ldots,\chi_n)\nonumber\\
    +f(\chi_1,\ldots,\chi_n)b^{\prime} f^h(\chi_1,\ldots,\chi_n) -b^{\prime}\alpha(f(\chi_1,\ldots,\chi_n))f^h(\chi_1,\ldots,\chi_n)\nonumber\\
    -\tilde{b}\alpha(f(\chi_1,\ldots,\chi_n))\lambda^{\prime}f(\chi_1,\ldots,\chi_n)+f(\chi_1,\ldots,\chi_n)\nonumber\\
    \bigg[b^{\prime}\sum\limits_{\sigma\in \mathcal{S}_n}\alpha(\gamma_{\sigma})\sum\limits_{j=0}^{n-1}\alpha(\chi_{\sigma(1)}\chi_{\sigma(2)}\ldots, \chi_{\sigma(j)}) \mathcal{V}_{\sigma(j+1)}\chi_{\sigma(j+2)},\ldots,\chi_{\sigma(n)}\bigg]\nonumber\\
    -b^{\prime}\bigg[\sum\limits_{\sigma\in \mathcal{S}_n}\alpha(\gamma_{\sigma})\sum\limits_{j=0}^{n-1}\alpha(\chi_{\sigma(1)}\chi_{\sigma(2)}\ldots, \chi_{\sigma(j)}) \mathcal{V}_{\sigma(j+1)}\chi_{\sigma(j+2)},\ldots,\chi_{\sigma(n)}\bigg]\nonumber\\
    f(\chi_1,\ldots,\chi_n)    -b^{\prime}\alpha(f(\chi_1,\ldots,\chi_n)) \nonumber \\
    \bigg[\sum\limits_{\sigma\in \mathcal{S}_n}\alpha(\gamma_{\sigma})\sum\limits_{j=0}^{n-1}\alpha(\chi_{\sigma(1)}\chi_{\sigma(2)}\ldots, \chi_{\sigma(j)})\mathcal{V}_{\sigma(j+1)}\chi_{\sigma(j+2)},\ldots,\chi_{\sigma(n)}\bigg]\nonumber\\
   -b^{\prime}f^h(\chi_1,\ldots,\chi_n)f(\chi_1,\ldots,\chi_n)=0
    \end{align}
    for all $\chi_1,\ldots,\chi_n, \mathcal{V}_1,\ldots, \mathcal{V}_n\in \mathcal{R}$. In particular, $\mathcal{R}$ satisfies
    \begin{align}
        f(\chi_1,\ldots,\chi_n) \nonumber\\
        b^{\prime}\bigg[\sum\limits_{\sigma\in \mathcal{S}_n}\alpha(\gamma_{\sigma})\sum\limits_{j=0}^{n-1}\alpha(\chi_{\sigma(1)}\chi_{\sigma(2)}\ldots, \chi_{\sigma(j)}) \mathcal{V}_{\sigma(j+1)}\chi_{\sigma(j+2)},\ldots,\chi_{\sigma(n)}\bigg]\nonumber\\
    -b^{\prime}\alpha(f(\chi_1,\ldots,\chi_n))  \nonumber\\ 
    \bigg[\sum\limits_{\sigma\in\mathcal{S}_n}\alpha(\gamma_{\sigma})\sum\limits_{j=0}^{n-1}\alpha(\chi_{\sigma(1)}\chi_{\sigma(2)}\ldots,\chi_{\sigma(j)})\mathcal{V}_{\sigma(j+1)}\chi_{\sigma(j+2)},\ldots,\chi_{\sigma(n)}\bigg]\nonumber\\
    -b^{\prime}\bigg[\sum\limits_{\sigma\in \mathcal{S}_n}\alpha(\gamma_{\sigma})\sum\limits_{j=0}^{n-1}\alpha(\chi_{\sigma(1)}\chi_{\sigma(2)}\ldots, \chi_{\sigma(j)}) \mathcal{V}_{\sigma(j+1)}\chi_{\sigma(j+2)},\ldots,\chi_{\sigma(n)}\bigg]\nonumber\\ f(\chi_1,\ldots,\chi_n)=0
    \end{align}
    for all $\chi_1,\ldots,\chi_n,\mathcal{V}_1,\ldots,\mathcal{V}_n\in \mathcal{R}$. If $\alpha$ is not inner then $\mathcal{R}$ satisfies
     \begin{align}
        f(\chi_1,\ldots,\chi_n)\bigg[b^{\prime}\sum\limits_{\sigma\in \mathcal{S}_n}\alpha(\gamma_{\sigma})\sum\limits_{j=0}^{n-1}\mathcal{Y}_{\sigma(1)}\mathcal{Y}_{\sigma(2)}\ldots, \mathcal{Y}_{\sigma(j)} \mathcal{V}_{\sigma(j+1)}\chi_{\sigma(j+2)},\ldots,\chi_{\sigma(n)}\bigg]\nonumber\\
    -b^{\prime}f^{\alpha}(\mathcal{Y}_1,\ldots,\mathcal{Y}_n) \bigg[\sum\limits_{\sigma\in \mathcal{S}_n}\alpha(\gamma_{\sigma})\sum\limits_{j=0}^{n-1}\mathcal{Y}_{\sigma(1)}\mathcal{Y}_{\sigma(2)}\ldots, \mathcal{Y}_{\sigma(j)} \mathcal{V}_{\sigma(j+1)}\chi_{\sigma(j+2)},\ldots,\chi_{\sigma(n)}\bigg]   \nonumber \\
   -b^{\prime}\bigg[\sum\limits_{\sigma\in \mathcal{S}_n}\alpha(\gamma_{\sigma})\sum\limits_{j=0}^{n-1}\mathcal{Y}_{\sigma(1)}\mathcal{Y}_{\sigma(2)}\ldots, \mathcal{Y}_{\sigma(j)} \mathcal{V}_{\sigma(j+1)}\chi_{\sigma(j+2)},\ldots,\chi_{\sigma(n)}\bigg] f(\chi_1,\ldots,\chi_n) \nonumber \\
   =0
    \end{align}
   for all  $\chi_1,\ldots,\chi_n,\mathcal{Y}_1,\ldots,\mathcal{Y}_{n},\mathcal{V}_1,\ldots,\mathcal{V}_n\in \mathcal{R}$.  In particular, choosing $\mathcal{V}_1=\cdots =\mathcal{V}_{n-1}=\mathcal{Y}_n=0$ we get
    \begin{align} \label{m14}
        f(\chi_1,\ldots,\chi_n)b^{\prime}f(\mathcal{Y}_1,\ldots,\ldots,\mathcal{Y}_{n-1},\mathcal{V}_n)  - b^{\prime}f(\chi_1,\ldots,\chi_n)f(\mathcal{Y}_1,\ldots,\ldots,\mathcal{Y}_{n-1},\mathcal{V}_n)  =0
    \end{align}
   for all  $\mathcal{Y}_1,\ldots,\mathcal{Y}_{j},\mathcal{Y}_{n-1},\mathcal{V}_n\in \mathcal{R}$. Now Equation (\ref{m14}) is similar to Equation (\ref{ed1}) by using parallel arguments  $b^\prime =0$, which leads to a contradiction.
    
    Further, if $\alpha$ is inner then there exists $t\in \mathcal{R}$ such that $\alpha(\chi)=t\chi t^{-1}\chi t$ and the ring $\mathcal{R}$ satisfies
     \begin{align}
        f(\chi_1,\ldots,\chi_n)b^{\prime}t \bigg[\sum\limits_{\sigma\in \mathcal{S}_n}\gamma_{\sigma}\sum\limits_{j=0}^{n-1}\chi_{\sigma(1)}\chi_{\sigma(2)}\ldots, \chi_{\sigma(j)}t^{-1} \mathcal{V}_{\sigma(j+1)}\chi_{\sigma(j+2)},\ldots,\chi_{\sigma(n)}\bigg] \nonumber\\
        -b^{\prime}t f(\chi_1,\ldots,\chi_n)\bigg[\sum\limits_{\sigma\in \mathcal{S}_n}\gamma_{\sigma}\sum\limits_{j=0}^{n-1}\chi_{\sigma(1)}\chi_{\sigma(2)}\ldots, \chi_{\sigma(j)}t^{-1} \mathcal{V}_{\sigma(j+1)}\chi_{\sigma(j+2)},\ldots,\chi_{\sigma(n)}\bigg] \nonumber \\
    -b^{\prime}t\bigg[\sum\limits_{\sigma\in \mathcal{S}_n}\gamma_{\sigma}\sum\limits_{j=0}^{n-1}\chi_{\sigma(1)}\chi_{\sigma(2}\ldots, \chi_{\sigma(j)}t^{-1} \mathcal{V}_{\sigma(j+1)}\chi_{\sigma(j+2)},\ldots,\chi_{\sigma(n)}\bigg]f(\chi_1,\ldots,\chi_n)\nonumber \\
    =0
    \end{align}
for all  $\chi_1,\ldots,\chi_n,\mathcal{V}_1,\ldots,\mathcal{V}_n\in \mathcal{R}$. In particular, choosing $\mathcal{V}_1=\mathcal{V}_2=\cdots=\mathcal{V}_{n-1}=0$ and $\mathcal{V}_n=t\chi_n$,  we have
    \begin{align}\label{ed4}
      f(\chi_1,\ldots,\chi_{n-1},\chi_n)b^{\prime}tf(\chi_1,\ldots,\chi_{n-1},\chi_n)-2b^{\prime}tf(\chi_1,\ldots,\chi_{n-1},\chi_n)^2=0
    \end{align}
    for all  $\chi_1,\ldots\chi_{n-1},\mathcal{Y}_n\in \mathcal{R}$. Above Equation (\ref{ed4}) is similar to Equation (\ref{ed2}) and this gives $b^\prime =0$, which again gives contradiction.
  
  \textbf{Case $(3)$:}  Suppose $p\neq  0$ and $q\neq 0$. Then for $\eta=p^{-1}w$, $\delta=-p^{-1}q,\ g(\chi)$ $=\delta h(\chi)+\eta \chi-\alpha(\chi)\eta$ for all $\chi\in \mathcal{R}$ and from Equation (\ref{m1}), the ring $\mathcal{R}$ satisfies
\begin{align}
    (a-c+\tilde{b}\eta)f(\zeta)^2+\tilde{b}(\delta h(f(\zeta))-\alpha(f(\zeta))\eta )f(\zeta)
    +f(\zeta)cf(\zeta)      +f(\zeta)b^{\prime}h(f(\zeta))   \nonumber\\   
    -b^{\prime}h(f(\zeta)^2) =0
\end{align}
    for all $\mathcal{\zeta}=(\mathcal{\zeta}_1,\ldots,\mathcal{\zeta}_n)\in \mathcal{R}^n$. Now applying the definition of $h$ we have
    \begin{align}\label{ed7}
        (a-c+\tilde{b}\eta)f(\chi_1,\ldots,\chi_n)^2+\tilde{b}\delta f^h(\chi_1,\ldots,\chi_n)f(\chi_1,\ldots,\chi_n)\nonumber\\
        -\tilde{b}\alpha(f(\chi_1,\ldots,\chi_n)) \eta f(\chi_1,\ldots,\chi_n)    +f(\chi_1,\ldots,\chi_n)cf(\chi_1,\ldots,\chi_n)\nonumber\\
    +f(\chi_1,\ldots,\chi_n)b^{\prime}f^h(\chi_1,\ldots,\chi_n)\nonumber \\-b^{\prime}\alpha(f(\chi_1,\ldots,\chi_n))f^h(\chi_1,\ldots,\chi_n)
    -b^{\prime}f^h(\chi_1,\ldots,\chi_n)f(\chi_1,\ldots,\chi_n)\nonumber \\
    +  \bigg[\tilde{b}\delta\sum\limits_{\sigma\in \mathcal{S}_n}\alpha(\gamma_{\sigma})\sum\limits_{j=0}^{n-1}\alpha(\chi_{\sigma(1)}\chi_{\sigma(2)}\ldots, \chi_{\sigma(j)}) \mathcal{V}_{\sigma(j+1)}\chi_{\sigma(j+2)},\ldots,\chi_{\sigma(n)}\bigg] f(\chi_1,\dots,\chi_n)\nonumber\\
   +f(\chi_1,\dots,\chi_n)    \bigg[b^{\prime}\sum\limits_{\sigma\in \mathcal{S}_n}\alpha(\gamma_{\sigma})\sum\limits_{j=0}^{n-1}\alpha(\chi_{\sigma(1)}\chi_{\sigma(2)}\ldots, \chi_{\sigma(j)}) \mathcal{V}_{\sigma(j+1)}\chi_{\sigma(j+2)},\ldots,\chi_{\sigma(n)}\bigg]\nonumber\\ 
    -b^{\prime}\alpha(f(\chi_1,\dots,\chi_n))     \bigg[\sum\limits_{\sigma\in \mathcal{S}_n}\alpha(\gamma_{\sigma})\sum\limits_{j=0}^{n-1}\alpha(\chi_{\sigma(1)}\chi_{\sigma(2)}\ldots, \chi_{\sigma(j)}) \mathcal{V}_{\sigma(j+1)}\chi_{\sigma(j+2)},\ldots,\chi_{\sigma(n)}\bigg]    \nonumber \\    
    -\bigg[b^{\prime}\sum\limits_{\sigma\in \mathcal{S}_n}\alpha(\gamma_{\sigma})\sum\limits_{j=0}^{n-1}\alpha(\chi_{\sigma(1)}
    \chi_{\sigma(2)}\ldots, \chi_{\sigma(j)}) \mathcal{V}_{\sigma(j+1)}\chi_{\sigma(j+2)},\ldots,\chi_{\sigma(n)}\bigg] f(\chi_1,\ldots,\chi_n)   \nonumber\\ =0
    \end{align}
    for all $\chi_1,\ldots,\chi_n,\mathcal{V}_1,\ldots,\mathcal{V}_n\in \mathcal{R}$. In particular, $\mathcal{R}$ satisfies 
    \begin{align}\label{m20}
        \bigg[\tilde{b}\delta\sum\limits_{\sigma\in \mathcal{S}_n}\alpha(\gamma_{\sigma})\sum\limits_{j=0}^{n-1}\alpha(\chi_{\sigma(1)}\chi_{\sigma(2)}\ldots, \chi_{\sigma(j)}) \mathcal{V}_{\sigma(j+1)}\chi_{\sigma(j+2)},\ldots,\chi_{\sigma(n)}\bigg]\nonumber\\ 
        f(\chi_1,\dots,\chi_n)+f(\chi_1,\dots,\chi_n)  \nonumber\\   
        \bigg[b^{\prime}\sum\limits_{\sigma\in \mathcal{S}_n}\alpha(\gamma_{\sigma})\sum\limits_{j=0}^{n-1}\alpha(\chi_{\sigma(1)}\chi_{\sigma(2)}\ldots, \chi_{\sigma(j)}) \mathcal{V}_{\sigma(j+1)}\chi_{\sigma(j+2)},\ldots,\chi_{\sigma(n)}\bigg]\nonumber\\ 
    -b^{\prime}\alpha(f(\chi_1,\dots,\chi_n))\nonumber\\ 
    \bigg[\sum\limits_{\sigma\in \mathcal{S}_n}\alpha(\gamma_{\sigma})\sum\limits_{j=0}^{n-1}\alpha(\chi_{\sigma(1)}\chi_{\sigma(2)}\ldots, \chi_{\sigma(j)}) \mathcal{V}_{\sigma(j+1)}\chi_{\sigma(j+2)},\ldots,\chi_{\sigma(n)}\bigg] \nonumber \\
    -\bigg[b^{\prime}\sum\limits_{\sigma\in \mathcal{S}_n}\alpha(\gamma_{\sigma})\sum\limits_{j=0}^{n-1}\alpha(\chi_{\sigma(1)}\chi_{\sigma(2)}\ldots, \chi_{\sigma(j)}) \mathcal{V}_{\sigma(j+1)}\chi_{\sigma(j+2)},\ldots,\chi_{\sigma(n)}\bigg]\nonumber\\ f(\chi_1,\ldots,\chi_n)
    =0
    \end{align}
      for all $\chi_1,\ldots,\chi_n,\mathcal{V}_1,\ldots,\mathcal{V}_n\in \mathcal{R}$. If $\alpha$ is not inner then by \cite{C. L. Chuang. 1988}  $\mathcal{R}$ satisfies  the following equation
     \begin{align}
        \bigg[\tilde{b}\delta\sum\limits_{\sigma\in\mathcal{S}_n}\alpha(\gamma_{\sigma})\sum\limits_{j=0}^{n-1}\mathcal{Y}_{\sigma(1)},\ldots,\mathcal{Y}_{\sigma(j)}\mathcal{V}_{\sigma(j+1)}\chi_{\sigma(j+2)},\ldots,\chi_{\sigma(n)}\bigg]           f(\chi_1,\dots,\chi_n) \nonumber\\   +f(\chi_1,\dots,\chi_n)     
           \bigg[b^{\prime}\sum\limits_{\sigma\in\mathcal{S}_n}\alpha(\gamma_{\sigma})\sum\limits_{j=0}^{n-1}\mathcal{Y}_{\sigma(1)},\ldots,\mathcal{Y}_{\sigma(j)}\mathcal{V}_{\sigma(j+1)}\chi_{\sigma(j+2)},\ldots,\chi_{\sigma(n)}\bigg]\nonumber\\ 
    -b^{\prime}f^\alpha (\mathcal{Y}_1,\dots,\mathcal{Y}_n))    \bigg[\sum\limits_{\sigma\in \mathcal{S}_n}\alpha(\gamma_{\sigma})\sum\limits_{j=0}^{n-1}\mathcal{Y}_{\sigma(1)},\ldots,\mathcal{Y}_{\sigma(j)} \mathcal{V}_{\sigma(j+1)}\chi_{\sigma(j+2)},\ldots,\chi_{\sigma(n)}\bigg] \nonumber \\
    -\bigg[b^{\prime}\sum\limits_{\sigma\in \mathcal{S}_n}\alpha(\gamma_{\sigma})\sum\limits_{j=0}^{n-1}\mathcal{Y}_{\sigma(1)},\ldots,\mathcal{Y}_{\sigma(j)} \mathcal{V}_{\sigma(j+1)}\chi_{\sigma(j+2)},\ldots,\chi_{\sigma(n)}\bigg] f(\chi_1,\ldots,\chi_n) \nonumber\\    =0
    \end{align}
    for all $\chi_1,\ldots,\chi_n,,\mathcal{Y}_1,\ldots,\mathcal{Y}_n,\mathcal{V}_1,\ldots,\mathcal{V}_n\in \mathcal{R}$. In particular, choosing $\chi_n=0$, we have that 
     \begin{align}
        \bigg[\tilde{b}\delta\sum\limits_{\sigma\in \mathcal{S}_n}\alpha(\gamma_{\sigma})\sum\limits_{j=0}^{n-1}\mathcal{Y}_{\sigma(1)},\ldots,\mathcal{Y}_{\sigma(j)} \mathcal{V}_{\sigma(j+1)}\chi_{\sigma(j+2)},\ldots,\chi_{\sigma(n)}\bigg]\ \nonumber \\
        f(\chi_1,\dots,\chi_n)   +f(\chi_1,\dots,\chi_n)   \nonumber\\   
                \bigg[b^{\prime}\sum\limits_{\sigma\in \mathcal{S}_n}\alpha(\gamma_{\sigma})\sum\limits_{j=0}^{n-1}\mathcal{Y}_{\sigma(1)}\ldots, \mathcal{Y}_{\sigma(j)}\mathcal{V}_{\sigma(j+1)}\chi_{\sigma(j+2)},\ldots,\chi_{\sigma(n)}\bigg]     \nonumber \\
    -\bigg[b^{\prime}\sum\limits_{\sigma\in \mathcal{S}_n}\alpha(\gamma_{\sigma})\sum\limits_{j=0}^{n-1}\mathcal{Y}_{\sigma(1)},\ldots,\mathcal{Y}_{\sigma(j)} \mathcal{V}_{\sigma(j+1)}\chi_{\sigma(j+2)},\ldots,\chi_{\sigma(n)}\bigg] \nonumber\\ f(\chi_1,\ldots,\chi_n)      =0
        \end{align}
       for all $\chi_1,\ldots,\chi_n,\mathcal{Y}_1,\ldots,\mathcal{Y}_n,\mathcal{V}_1,\ldots,\mathcal{V}_n\in \mathcal{R}$. Again choosing $\mathcal{V}_1 =\cdots = \mathcal{V}_{n-1} = 0,\  \mathcal{R}$ satisfies
        \begin{align} \label{equation}
           ( \tilde{b}\delta-b^{\prime}) f(\mathcal{Y}_1,\ldots,\mathcal{Y}_{n-1},\mathcal{V}_n)f(\chi_1,\dots,\chi_n)+f(\chi_1,\dots,\chi_n)b^{\prime} f(\mathcal{Y}_1,\ldots,\mathcal{Y}_{n-1},\mathcal{V}_n)\nonumber \\
          =0
        \end{align}
    for all $\chi_1,\ldots,\chi_n,\mathcal{Y}_1,\ldots,\mathcal{Y}_{n-1}$ and $\mathcal{V}_n\in \mathcal{R}$. Now by using the Lemma $3$ of \cite{V. D. Filippis 2021} in Equation (\ref{equation}), we have $( \tilde{b}\delta-b^{\prime}) f(\mathcal{Y}_1,\ldots,\mathcal{Y}_{n-1},\mathcal{V}_n)=-b^{\prime} f(\mathcal{Y}_1,\ldots,\mathcal{Y}_{n-1},\mathcal{V}_n))\in C.$ Again  (Note $16$, \cite{V. D. Filippis 2021}) we have $( \tilde{b}\delta-b^{\prime}) =0=b^{\prime}$, which gives that both $\tilde{b}$ and $b^{\prime}$ are equals to zero, a contradiction.
    
On the other hand if the automorphism $\alpha $ is inner then there exists $t\in \mathcal{U}$ such that $\alpha(\chi) = t\chi t^{-1}$ for all $\chi\in \mathcal{R}$ and Equation (\ref{m20}) reduces to
\begin{align} \label{ed5}
        \bigg[\tilde{b}\delta t\sum\limits_{\sigma\in \mathcal{S}_n}\gamma_{\sigma}\sum\limits_{j=0}^{n-1}\chi_{\sigma(1)}\chi_{\sigma(2)}\ldots, \chi_{\sigma(j)}t^{-1} \mathcal{V}_{\sigma(j+1)}\chi_{\sigma(j+2)},\ldots,\chi_{\sigma(n)}\bigg]\nonumber\\ 
        f(\chi_1,\dots,\chi_n) +f(\chi_1,\dots,\chi_n) \nonumber\\ 
        \bigg[b^{\prime}t\sum\limits_{\sigma\in \mathcal{S}_n}\gamma_{\sigma}\sum\limits_{j=0}^{n-1}\chi_{\sigma(1)}\chi_{\sigma(2)}\ldots, \chi_{\sigma(j)}t^{-1} \mathcal{V}_{\sigma(j+1)}\chi_{\sigma(j+2)},\ldots,\chi_{\sigma(n)}\bigg]\nonumber\\ 
    -b^{\prime}tf(\chi_1,\dots,\chi_n)\nonumber\\ 
    \bigg[\sum\limits_{\sigma\in \mathcal{S}_n}\gamma_{\sigma}\sum\limits_{j=0}^{n-1}\chi_{\sigma(1)}\chi_{\sigma(2)}\ldots, \chi_{\sigma(j)}t^{-1} \mathcal{V}_{\sigma(j+1)}\chi_{\sigma(j+2)},\ldots,\chi_{\sigma(n)}\bigg] \nonumber \\
    -\bigg[b^{\prime}t\sum\limits_{\sigma\in \mathcal{S}_n}\gamma_{\sigma}\sum\limits_{j=0}^{n-1}\chi_{\sigma(1)}\chi_{\sigma(2)}\ldots, \chi_{\sigma(j)}t^{-1} \mathcal{V}_{\sigma(j+1)}\chi_{\sigma(j+2)},\ldots,\chi_{\sigma(n)}\bigg]\nonumber \\ f(\chi_1,\dots,\chi_n)     =0
    \end{align}
    for all $\chi_1,\ldots,\chi_n\in \mathcal{R}$. In particular, choosing $\mathcal{V}_1 =\cdots = \mathcal{V}_{n-1} =0$ and $\mathcal{V}_n = t\chi_n$, we get that
    \begin{align}
        (\tilde{b}\delta -2b^{\prime})tf(\chi_1,\dots,\chi_n)^2+f(\chi_1,\dots,\chi_n)b^{\prime}tf(\chi_1,\dots,\chi_n)=0
    \end{align}
    for all $\chi_1,\ldots,\chi_n\in \mathcal{R}$. Then from Fact $\ref{fac 8}$, we have $ (\tilde{b}\delta -2b^{\prime})t=-b^{\prime}t\in \mathcal{C}$ which gives that $\tilde{b}\delta t,b^{\prime}t\in \mathcal{C}$ infact $\tilde{b}\delta-b^{\prime}=0.$ Using these relations in Equation (\ref{ed7}) we have
     \begin{align}\label{ed8}
          (a-c+\tilde{b}\eta)f(\chi_1,\ldots,\chi_n)^2
        -\tilde{b}tf(\chi_1,\ldots,\chi_n)t^{-1}\eta f(\chi_1,\ldots,\chi_n)\nonumber \\
    +f(\chi_1,\ldots,\chi_n)cf(\chi_1,\ldots,\chi_n)
    =0
    \end{align}
    for all $\chi_1,\ldots,\chi_n\in \mathcal{R}$. Then from Lemma \ref{lem2} and Lemma \ref{lem3}, either $\tilde{b}t\in \mathcal{C}$ or $t^{-1}\eta\in \mathcal{C}$. If $\tilde{b}t\in \mathcal{C}$, Equation (\ref{ed8}) reduced to 
    \begin{align}\label{ed9}
          (a-c+\tilde{b}\eta)f(\chi_1,\ldots,\chi_n)^2
    +f(\chi_1,\ldots,\chi_n)(c-\tilde{b}\eta)f(\chi_1,\ldots,\chi_n)
    =0
    \end{align}
     for all $\chi_1,\ldots,\chi_n\in \mathcal{R}$. Then from Fact \ref{fac 8}, we have $ (a-c+\tilde{b}\eta)=-(c-\tilde{b}\eta)\in \mathcal{C}$ or $a=0$. Now putting $a=0$ and $(c-\tilde{b}\eta)\in \mathcal{C}$ in Equation (\ref{ed8}) we have
     \begin{align}
         (-c+\tilde{b}\eta)f(\chi_1,\ldots,\chi_n)^2
    +f(\chi_1,\ldots,\chi_n)(c-\tilde{b}\eta)f(\chi_1,\ldots,\chi_n)
    =0
     \end{align}
     for all $\chi_1,\ldots,\chi_n\in \mathcal{R}$. Then again from Fact \ref{fac 8}, we have $c-\tilde{b}\eta\in \mathcal{C}$ and this Conclusion $(5)$ of the main theorem.    
     
     Now if $t^{-1}\eta\in \mathcal{C}$ then $t\in \mathcal{C}$, which makes $\alpha$ identity automorphism, a contradiction to our assumption.
     
     
    	The following corollaries are immediate consequences of our main theorem.
	\begin{cor}
	Let $\mathcal{R}$ be a prime ring of characteristic different from $2$ with Utumi quotient ring $\mathcal{U}$ and extended centroid $\mathcal{C}$, $\phi(\chi_1,\ldots,\chi_n)$ be a multilinear polynomial over $\mathcal{C}$, which is not central valued on $\mathcal{R}$. Let $\mathcal{F}$ be a non-zero $\mathrm{b}$-generalized skew derivations on $\mathcal{R}$ acts like a Jordan derivations on $\mathcal{R}$,
	$$\mathcal{F}(\phi(\chi))\phi(\chi)+\phi(\chi) \mathcal{F}(\phi(\chi))=\mathcal{F}(\phi(\chi)^2).$$ 
	Then there exist $a,\tilde{a} \in \mathcal{U}$ such that $\mathcal{F}(\chi)= a\chi+\chi\tilde{a}$  for all $\chi \in \mathcal{R}$ with  $\tilde{a}+a=0$.
	\end{cor}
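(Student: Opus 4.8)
The plan is to read the statement as the diagonal case $\mathcal{G}=\mathcal{F}$ of Theorem~\ref{thm}. Setting $\mathcal{G}=\mathcal{F}$ in $\mathcal{F}(u)u+u\mathcal{G}(u)=\mathcal{G}(u^2)$ produces exactly the Jordan identity $\mathcal{F}(u)u+u\mathcal{F}(u)=\mathcal{F}(u^2)$ on $u\in\{\phi(\chi)\}$, so Theorem~\ref{thm} applies (with the triplet of $\mathcal{G}$ taken equal to that of $\mathcal{F}$). It is worth observing at the outset that the target conclusion $\mathcal{F}(\chi)=a\chi+\chi\tilde a$ with $\tilde a+a=0$ says precisely $\mathcal{F}(\chi)=[a,\chi]$, an inner derivation, and every such map is readily checked to satisfy the Jordan identity; hence the corollary claims these are the \emph{only} solutions, and the proof amounts to eliminating the remaining alternatives of Theorem~\ref{thm}.

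First I would dispose of the four easy alternatives. Conclusions $(3)$ and $(4)$ of Theorem~\ref{thm} both yield $\mathcal{F}=0$, which is ruled out by the hypothesis that $\mathcal{F}$ is non-zero. Conclusions $(1)$ and $(2)$ already present $\mathcal{F}(\chi)=a\chi+\chi\tilde a$ together with $\tilde a+a=0$, which is verbatim the asserted form; the extra data describing $\mathcal{G}$ is redundant once $\mathcal{G}=\mathcal{F}$. Thus only alternative $(5)$ demands real work.

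The heart of the argument is alternative $(5)$: here $\mathcal{F}(\chi)=b^{\prime}h(\chi)+[\tilde b\eta,\chi]$ and $\mathcal{G}(\chi)=c\chi+b^{\prime}h(\chi)$, with $\eta\in\mathcal{C}$, $\alpha$ inner (say $\alpha(\chi)=t\chi t^{-1}$) and $\tilde b\alpha(\chi)=\chi\tilde b$, i.e.\ $\tilde bt\in\mathcal{C}$. Imposing $\mathcal{F}=\mathcal{G}$ and cancelling the common summand $b^{\prime}h(\chi)$ leaves $[\tilde b\eta,\chi]=c\chi$ for all $\chi\in\mathcal{R}$; since $\eta\in\mathcal{C}$ this is $(\eta\tilde b-c)\chi-\chi(\eta\tilde b)=0$, and evaluating in $\mathcal{U}$ at $\chi=1$ forces $c=0$ and then $\eta\tilde b\in\mathcal{C}$, so the commutator term disappears and $\mathcal{F}(\chi)=b^{\prime}h(\chi)$. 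To finish I would exploit the data carried from the proof of Theorem~\ref{thm}, namely $b^{\prime}=\tilde b\delta$ with $\delta\in\mathcal{C}$, which gives $b^{\prime}t=\delta(\tilde bt)\in\mathcal{C}$; writing the (inner) skew derivation as $h(\chi)=\nu\chi-\alpha(\chi)\nu$ and using $b^{\prime}t\in\mathcal{C}$ to move $\chi$ across the conjugation yields $\mathcal{F}(\chi)=b^{\prime}\nu\chi-\chi b^{\prime}\nu=[b^{\prime}\nu,\chi]$, the required form with $a=b^{\prime}\nu$ and $\tilde a=-b^{\prime}\nu$. I expect the main obstacle to be exactly this last reduction: one must confirm that $b^{\prime}h$ has no surviving \emph{outer} part, so that it genuinely collapses to an inner derivation. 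The clean way to secure this is to note that $\mathcal{F}=\mathcal{G}$ forces the two associated skew derivations to coincide, so the linear-dependence relation of alternative $(5)$ becomes $(1-\delta)h$ inner with $\delta\neq 1$, whence $h$ itself is an inner skew derivation and the collapse above is legitimate; the alternative $b^{\prime}=0$ would return $\mathcal{F}=0$, again against hypothesis.
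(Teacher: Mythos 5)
Your reduction to Theorem \ref{thm} with $\mathcal{G}=\mathcal{F}$, and your dispatch of alternatives $(1)$--$(4)$, match the paper's own proof (which consists entirely of the one line ``take $\mathcal{F}=\mathcal{G}$ in Theorem \ref{thm}''). You correctly identified that alternative $(5)$ is the crux --- the paper silently ignores it --- and your derivation there of $c=0$, $\tilde b\eta\in\mathcal{C}$, hence $\mathcal{F}=b^{\prime}h$, is sound. But the step that is supposed to finish $(5)$ has a genuine gap: you assert that $\mathcal{F}=\mathcal{G}$ forces the associated skew derivations to coincide, so that the dependence relation reads ``$(1-\delta)h$ inner with $\delta\neq 1$.'' The inequality $\delta\neq 1$ has no justification, and it is exactly what fails. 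Equality of the two $\mathrm{b}$-generalized skew derivations only yields $\tilde b\,z\,g(y)=b^{\prime}\,z\,h(y)$ for all $z,y\in\mathcal{R}$, hence (when $\tilde b\neq 0$) $b^{\prime}=\lambda\tilde b$ and $g=\lambda h$ for some $\lambda\in\mathcal{C}$; when $\lambda=\delta$ the relation $(\lambda-\delta)h=\text{inner}$ is vacuous and $h$ may perfectly well remain outer. (Separately, your appeal to $b^{\prime}=\tilde b\delta$ is data extracted from the \emph{proof} of Theorem \ref{thm}, not from its statement, so it is not legitimately available when deducing a corollary from the theorem.)

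In fact no argument can close this gap, because the corollary as stated fails whenever $\mathcal{R}$ admits an outer derivation: any ordinary derivation $d$ is a $\mathrm{b}$-generalized skew derivation with triplet $(1,\mathrm{id},d)$ and satisfies $d(u^2)=d(u)u+ud(u)$ for \emph{all} $u\in\mathcal{R}$, in particular on every evaluation of $\phi$; yet an outer $d$ is not of the form $\chi\mapsto a\chi+\chi\tilde a$ for any $a,\tilde a\in\mathcal{U}$. This situation lands precisely in alternative $(5)$ of Theorem \ref{thm} (take $b^{\prime}=\tilde b=1$, $h=g=d$, $\alpha=\mathrm{id}$, $\eta=0$, $c=0$, and note $\delta=1$), which is why $(5)$ cannot be discarded. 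So the statement needs either an additional hypothesis (for instance that $\mathcal{R}$ is PI, or that the skew derivation associated to $\mathcal{F}$ is inner) or an additional conclusion allowing $\mathcal{F}=b^{\prime}h$ with $h$ a (possibly outer) skew derivation. Your proof attempt, like the paper's one-line proof, cannot be repaired as it stands; the difference is that you at least exposed where the obstruction lives.
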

	\begin{proof}
	    If we take $\mathcal{F}=\mathcal{G}$ in Theorem \ref{thm}, we get the required conclusion. 
	\end{proof}
	\begin{cor}
	Let $\mathcal{R}$ be a prime ring of characteristic different from $2$ with Utumi quotient ring $\mathcal{U}$ and extended centroid $\mathcal{C}$, $\phi(\chi_1,\ldots,\chi_n)$ be a multilinear polynomial over $\mathcal{C}$, which is not central valued on $\mathcal{R}$. Let $\mathcal{G}$ be a $\mathrm{b}$-generalized skew derivations on $\mathcal{R}$ acts as a $2$-Jordan multiplier, $$\phi(\chi) \mathcal{G}(\phi(\chi))=\mathcal{G}(\phi(\chi)^2).$$ Then one of the following holds:
	\begin{enumerate}
	   \item  There exist $\tilde{c} \in \mathcal{C}$ and ${c} \in \mathcal{U}$ such that  $\mathcal{G}(\chi)=\tilde{c} \chi+\chi c$ for all $\chi \in \mathcal{R}$.
    \item There exist ${c} \in \mathcal{U}$ such that  $\mathcal{G}(\chi)= \chi c$ for all $\chi \in \mathcal{R}$.
	    \item There exist ${c} \in \mathcal{C}$ such that  $\mathcal{G}(\chi)={c} \chi$ for all $\chi \in \mathcal{R}$.
	\end{enumerate}
	\end{cor}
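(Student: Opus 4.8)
The plan is to obtain this statement as a direct specialization of Theorem~\ref{thm}. The key observation is that the zero map is itself a $\mathrm{b}$-generalized skew derivation (with associated triplet $(0,\alpha,0)$, where $\alpha$ is the automorphism attached to $\mathcal{G}$), so I may take $\mathcal{F}=0$ in Theorem~\ref{thm}. With this choice the governing identity $\mathcal{F}(u)u+u\mathcal{G}(u)=\mathcal{G}(u^2)$ collapses to $u\mathcal{G}(u)=\mathcal{G}(u^2)$, which is precisely the $2$-Jordan multiplier hypothesis $\phi(\chi)\mathcal{G}(\phi(\chi))=\mathcal{G}(\phi(\chi)^2)$. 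Hence one of the five conclusions of Theorem~\ref{thm} must hold, and it remains only to read each of them off under the additional constraint $\mathcal{F}=0$.

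I would then treat the five cases in turn. In conclusions (1) and (2) of the theorem one has $\mathcal{F}(\chi)=a\chi+\chi\tilde{a}$ with $\tilde{a}+a=0$; the requirement $\mathcal{F}=0$ forces $[\chi,\tilde{a}]=0$ for every $\chi\in\mathcal{R}$, so that $\tilde{a}\in\mathcal{C}$ and then $a\in\mathcal{C}$. Substituting $\tilde{a}\in\mathcal{C}$ into the side relations $\tilde{a}+c\in\mathcal{C}$ and $\tilde{a}+\hat{c}\in\mathcal{C}$ yields $c\in\mathcal{C}$ and $\hat{c}\in\mathcal{C}$ respectively, so either $\mathcal{G}(\chi)=c\chi+\chi\tilde{c}$ with $c\in\mathcal{C}$ (conclusion (1) of the corollary, whose sub-case $c=0$ gives the pure right multiplier of conclusion (2)) or $\mathcal{G}(\chi)=\hat{c}\chi$ with $\hat{c}\in\mathcal{C}$ (conclusion (3) of the corollary). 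Conclusions (3) and (4) of the theorem already carry $\mathcal{F}=0$ and reproduce conclusions (1) and (3) of the corollary without further work.

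The only case that demands a short computation is conclusion (5) of the theorem, where $\mathcal{F}(\chi)=b^{\prime}h(\chi)+[\tilde{b}\eta,\chi]$ and $\mathcal{G}(\chi)=c\chi+b^{\prime}h(\chi)$ with $\eta\in\mathcal{C}$, $\alpha$ inner and $\tilde{b}\alpha(\chi)=\chi\tilde{b}$. Imposing $\mathcal{F}=0$ gives $b^{\prime}h(\chi)=-[\tilde{b}\eta,\chi]$; substituting this into $\mathcal{G}$ and expanding the commutator produces $\mathcal{G}(\chi)=c\chi-[\tilde{b}\eta,\chi]=(c-\tilde{b}\eta)\chi+\chi\,\tilde{b}\eta$. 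Invoking the centrality relation $c-\tilde{b}\eta\in\mathcal{C}$ recorded in the proof of conclusion (5) then exhibits $\mathcal{G}$ in the form $\tilde{c}\chi+\chi c_0$ with $\tilde{c}=c-\tilde{b}\eta\in\mathcal{C}$ and $c_0=\tilde{b}\eta\in\mathcal{U}$, i.e.\ again conclusion (1) of the corollary (degenerating to (2) when $\tilde{c}=0$ and to (3) when $c_0=0$). I expect the bookkeeping in case (5) to be the only real obstacle: one must carefully carry over the centrality constraints from Theorem~\ref{thm} and use $\mathcal{F}=0$ to collapse the skew-derivation term $b^{\prime}h$ into an inner, hence two-sided multiplication, part, after which all five theorem-cases fold into the three forms asserted for $\mathcal{G}$.
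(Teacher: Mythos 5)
Your proposal is correct and follows exactly the paper's own route: the paper proves this corollary in one line by setting $\mathcal{F}=0$ in Theorem~\ref{thm}, which is precisely your strategy. Your case-by-case verification (in particular the observation that $\tilde{a}+a=0$ together with $\mathcal{F}=0$ forces $a,\tilde{a}\in\mathcal{C}$, and the collapse of $b^{\prime}h(\chi)=-[\tilde{b}\eta,\chi]$ in case (5) so that $\mathcal{G}(\chi)=(c-\tilde{b}\eta)\chi+\chi\tilde{b}\eta$) simply makes explicit the bookkeeping the paper leaves to the reader.
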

		\begin{proof}
	    If we take $\mathcal{F}=0$ in Theorem \ref{thm}, we get the required conclusion. 
	\end{proof}
   

\end{document}